\newcommand{\MC}{{\mathcal {MC}}}
\newcommand{\g}{{\bf g}}
\newcommand{\R}{{\bf R}}
\newcommand{\Z}{{\bf Z}}
\newcommand{\N}{{\bf N}}
\newcommand{\C}{{\bf C}}
\newcommand{\Se}{{\bf {S}}}
\newcommand{\dis}{{\bf d}}
\newcommand{\ac}{{\bf a}}
\newcommand{\bc}{{\bf b}}
\newcommand{\FD}{{\mathcal D}}
\newcommand{\E}{{\mathcal E}}
\newcommand{\Proj}{\mathcal {P}}
\newcommand{\Idle}{\mathcal {I}}
\newcommand{\wh}{\widehat}
\newcommand{\wt}{\widetilde}
\newcommand{\ph}{\partial}
\newcommand{\Homeo}{\operatorname{Homeo}}
\newcommand{\Diff}{\operatorname{Diff}}
\newcommand{\Mod}{\operatorname{Mod}}
\newcommand{\inte}{\operatorname{int}}
\newcommand{\chara}{\operatorname{ch}}
\newtheorem{theorem}{Theorem}[section]
\newtheorem{definition}{Definition}[section]
\newtheorem{lemma}{Lemma}[section]
\newtheorem{proposition}{Proposition}[section]
\theoremstyle{remark}
\newtheorem*{remark}{Remark}
\begin{document}

\author{Vladimir Markovic and Dragomir \v Sari\' c}

\address{Department of Mathematics, University of Warwick,
Coventry, CV8 4AL, United Kingdom} \email{v.markovic@warwick.ac.uk}

\address{Department of Mathematics, Queens College of CUNY,
65.10 Kissena Blvd., Flushing, NY 1.167}
\email{Dragomir.Saric@qc.cuny.edu}

\title[Realization by homeomorphisms]{The mapping class group cannot be realized by homeomorphisms}

\subjclass{20H10}

\maketitle

\begin{abstract} Let $M$ be a closed surface. By $\Homeo(M)$ we denote the group of orientation preserving homeomorphisms of $M$ and let
$\MC(M)$ denote the Mapping class group. In this paper we complete the proof of the conjecture of Thurston that says that for any closed surface $M$ of genus $\g \ge 2$, there is no homomorphic section $\E:\MC(M) \to \Homeo(M)$ of the standard projection map $\Proj:\Homeo(M) \to \MC(M)$.
\end{abstract}

\section{Introduction}
Let $M$ be a closed surface of genus $\g \ge 2$. If $\wt{f} \in \Homeo(M)$ is a homeomorphism, then $[\wt{f}]=f \in \MC(M)$ denotes the corresponding homotopy class. Denote by $\Proj:\Homeo(M)\to \MC(M)$, the  projection from the group of orientation preserving homeomorphisms $\Homeo(M)$ of $M$, onto the mapping class group $\MC(M)$, that is   for $\wt{f} \in \Homeo(M)$, set $\Proj(\wt{f})=[\wt{f}]$. One stimulating question is whether
there exists a homomorphism $\E:\MC(M) \to \Homeo(M)$, so that $\Proj \circ \E$ is the identity mapping on $\MC(M)$. Such a homomorphism represents a homomorphic section (from now we just say section) of the projection $\Proj$. The mapping class group of the torus can be represented by homeomorphisms, namely  the corresponding section $\E$ exists. In fact, it can be represented as the group of affine transformations  ${\bf {SL}}_2(\Z)$.
\vskip .1cm
Morita \cite{mor} showed that there is no such section $\E :\MC(M)\to \Diff(M)$, when $\g > 4$, where $\Diff(M)$ is the
group of diffeomorphisms of $M$. Markovic \cite{ma} showed that a section $\E :\MC(M)\to \Homeo(M)$ does not exists when $\g>5$. Very recently Franks and Handel \cite{f-h-2} showed that $\E :\MC(M)\to \Diff(M)$ does not exists when $\g \ge 3$. In \cite{c-c}  Cantat and Cerveau showed that there is no section  $\E :\MC(M)\to \Diff^{\omega}(M)$ for any $\g \ge 2$, where $\Diff^{\omega}(M)$ is the group of real analytic diffeomorphisms of $M$. In fact in \cite{f-h-2} and \cite{c-c} it is shown that such sections do not exist when $\MC(M)$ is replaced by a finite index subgroup of $\MC(M)$.
In this paper we settle the  general case, by showing that such $\E:\MC(M) \to \Homeo(M)$ does not exists, where $M$ is a closed surface of any genus $\g \ge 2$. This of course settles  the case of diffeomorphisms   in the genus two case as well. The main result of this paper is the following theorem.

\begin{theorem}\label{main}  Let $M$ be a closed surface of genus $\g\ge 2$. Let $\Proj:\Homeo(M)\to \MC(M)$ be the projection. Then
there is no homomorphism
$\E :\MC(M)\to \Homeo(M)$, so that $\Proj \circ \E$ is the identity mapping on $\MC(M)$.
\end{theorem}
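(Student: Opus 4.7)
The strategy I would pursue is proof by contradiction, extending the approach that Markovic \cite{ma} used for $\g \ge 6$ so that it covers the remaining low-genus cases $\g\in\{2,3,4,5\}$. Assume that a section $\E:\MC(M)\to\Homeo(M)$ exists. The overarching idea is to combine the rigidity of finite-order homeomorphisms of $M$ (torsion in $\MC(M)$ is mapped to genuine torsion in $\Homeo(M)$) with algebraic relations among Dehn twists, and to derive a contradiction from the local action of $\E$ near fixed points of a well chosen torsion element.

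I would begin by fixing a torsion element $\sigma\in\MC(M)$ whose realization is easy to control, for example the hyperelliptic involution (which is central in genus $2$ and exists in the hyperelliptic locus in general), or a rotation of order $2\g+2$. Since $\E$ is a homomorphism and $\E(\sigma)$ has the same finite order as $\sigma$, the classical Ker\'ekj\'art\'o/Brouwer theory together with a Nielsen-realization-type comparison pins down the topological conjugacy class of $\E(\sigma)$: its fixed point set, local rotation numbers, and the orbifold structure of the quotient $M/\langle \E(\sigma)\rangle$ match those of a genuine conformal realization. The first technical step would be to make this matching quantitative enough to control the behavior of $\E$ on centralizers and near fixed points.

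Next I would feed this local picture into the algebraic relations in $\MC(M)$. Concretely, pick simple closed curves $\alpha,\beta$ intersecting once whose Dehn twists $T_\alpha,T_\beta$ satisfy the braid relation $T_\alpha T_\beta T_\alpha=T_\beta T_\alpha T_\beta$, and choose them so that they interact with $\sigma$ in a prescribed way (e.g. they are interchanged or preserved by $\sigma$). The images $\E(T_\alpha),\E(T_\beta)$ then satisfy the same braid relation and the same commutation/conjugation with $\E(\sigma)$. Using the local rotation data near the fixed points of $\E(\sigma)$, one constrains the rotation number and fixed-point behavior of $\E(T_\alpha)$ and $\E(T_\beta)$ around those points. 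For $\g\ge 6$ one can finish this way using families of curves of a kind that only exist when there is enough room; in lower genus one has to be cleverer, and I would bring in additional relations such as the chain or lantern relation to squeeze out the required incompatibility.

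The main obstacle is exactly this last step in low genus. For $\g\in\{2,3,4,5\}$ the dual curve graph and the relevant orbifold quotients $M/\langle\sigma\rangle$ are too small to accommodate the kind of pigeonhole configurations used before, so the contradiction cannot be extracted from a single braid pair. My expectation is that one must combine a careful analysis of $\E$ on the centralizer of $\sigma$ (a group of mapping classes of the quotient orbifold) with a local-to-global argument: the prescribed rotation numbers at the fixed points of $\E(\sigma)$ are compatible with \emph{some} lift to a homeomorphism of $M$, but not with a homeomorphism that realizes a Dehn twist in a curve passing through (or linked to) these fixed points. Making this last incompatibility precise, uniformly across all $\g\ge 2$, is where I expect the real work to lie.
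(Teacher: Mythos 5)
Your proposal is a research plan rather than a proof, and the plan itself leaves the decisive step open. You write that the "real work" lies in making the final incompatibility precise "uniformly across all $\g\ge 2$"; that is exactly the content of the theorem, so as written there is no argument to check. More importantly, the mechanism you propose for producing the contradiction does not obviously work. You want to constrain $\E(T_\alpha)$ and $\E(T_\beta)$ via "local rotation data near the fixed points of $\E(\sigma)$," but a Dehn twist $T_\alpha$ can be represented by a homeomorphism supported in an annular neighborhood of $\alpha$ disjoint from $\mathrm{Fix}(\E(\sigma))$, and $\E(T_\alpha)$ is only a homeomorphism in the correct homotopy class. Homotopy gives no control over the germ of $\E(T_\alpha)$ at any particular point, so there is no canonical "local rotation number" to compare. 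The braid relation $T_\alpha T_\beta T_\alpha = T_\beta T_\alpha T_\beta$ is also a very weak constraint by itself; it is satisfied by many triples in $\Homeo(M)$ and gives no obvious way to localize, and you do not say which invariant would fail to be compatible. Kerckhoff's Nielsen realization shows that torsion \emph{can} be realized, so the fact that $\E(\sigma)$ is topologically conjugate to a conformal automorphism is consistent with the existence of a section on the centralizer of $\sigma$; that compatibility is precisely why the fixed-point picture alone does not yield a contradiction.

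For comparison, the paper's route is quite different and does not revolve around torsion and fixed-point germs. It uses the length-$\g$ chain relations $(t_{\alpha_1}\cdots t_{\alpha_g})^{2g+2}=t_\gamma=(t_{\beta_1}\cdots t_{\beta_g})^{2g+2}$ (with the odd-genus analogue involving $t_\gamma t_{\gamma_1}$), together with the involution $e$ that swaps the two chains. The homeomorphism $\E(t_\gamma)$ is shown to preserve a canonical annulus coming from the minimal upper semi-continuous decomposition for the subgroups $\Gamma(\ac_i,\bc_j)$, and a new \emph{twist number} (a rotation-number-type invariant for annulus maps, made well-defined via faithful domains and the strip cover) is assigned to $\E(t_\gamma)$ on that annulus. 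A minimal such annulus is shown to exist and to have odd twist number, while an analysis of $K$-idle sets, long-range Lipschitz behavior, and Moore's triode theorem, combined with the Artin relations, forces the twist number there to be $0$. That is the contradiction. If you want to pursue your idea, the missing ingredient you would need to invent is precisely some such conjugation-invariant, homotopy-robust quantity that (a) is forced to be nonzero by the relations and (b) is forced to vanish by the dynamics of a realized section; "local rotation numbers at fixed points" does not have property (a) or (b) in the generality needed.
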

The proof of this theorem  is based on analysing certain Artin type relations in $\MC(M)$, proved by Farb-Margalit in \cite{f-m}, and eventually obtaining a contradiction with the existence of a homomorphic section $\E:\MC(M) \to \Homeo(M)$. Our proof does not distinguish between surfaces of different genus, except that we treat surfaces of even and odd genus in a slightly different manner (the differences are cosmetic).
We use techniques from \cite{ma} but ultimately we need several new ideas and technical gadgets to prove this theorem.
\vskip .1cm
We state these important relations in $\MC(M)$ and recall the notion of upper semi-continuous decompositions and the minimal decomposition for subgroups of $\Homeo(M)$ in Section 2. In Section 3 we introduce the notion of the twist number that is associated to a homeomorphism of an annulus, and prove the main preliminary results about dynamics of homeomorphisms actions on annuli. In Section 4 we assume the existence of a homomorphic section $\E:\MC(M) \to \Homeo(M)$ and construct the minimal annulus (this is a certain topological annulus in $M$ where the argument takes place). In Section 5 we prove Theorem \ref{main}. The Artin type relations from Section 2 will be used toward the end of Section 5.

\section{Important relations in $\MC(M)$ and the minimal decomposition}

\subsection{Relations in the mapping class group}
Recall that for a simple closed curve $\alpha$ by $t_{\alpha} \in \MC(M)$ we denote the twist about $\alpha$.
Given two simple closed curves $\alpha$ and $\beta$ on $M$, let
$( [\alpha] ,[\beta] )$ denote the geometric intersection number between
their homotopy classes.
\vskip .1cm
First consider the case when $M$ is a closed surface of even genus
$\g \ge 2$. Then there exists a separating simple closed curve
$\gamma$ on $M$ such that $M \setminus \gamma$ has two components
each homeomorphic to a closed surface of genus $\g/2$ minus a disk
(see Figure \ref{genus-2} and Figure \ref{genus-4}). Let   $e \in \MC(M)$ be an involution (that is $e^{2}=id$)
which interchanges the two components of $M \setminus \gamma$ and such that $e([\gamma])=[\gamma]$. There are many such involutions and we fix
one of them once and for all.
\vskip .1cm

\begin{figure}
\centering
\includegraphics{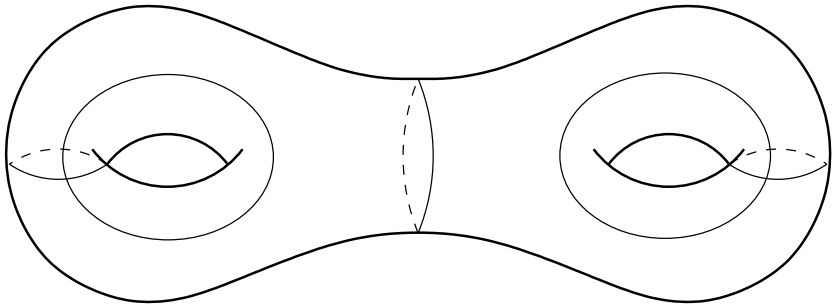}
\caption{The genus two case.}
\label{genus-2}
\end{figure}

\begin{figure}
\centering
\includegraphics{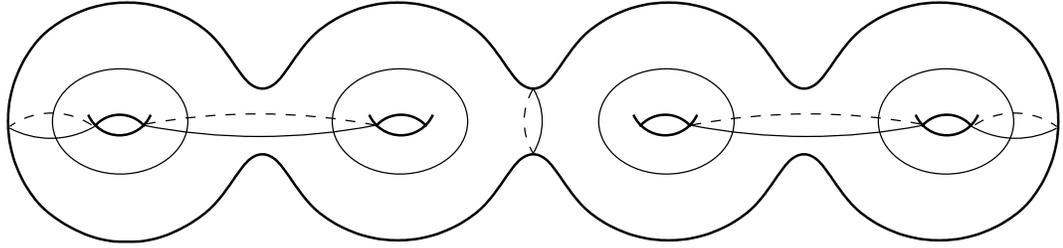}
\caption{The genus four case.} \label{genus-4}
\end{figure}

Let $\alpha_1,\ldots , \alpha_g$ be a $chain$ of simple closed curves on the left-hand side component of
$M \setminus \{\gamma\}$, namely  $( [\alpha_i],[\alpha_{i+1}] )=1$ for
$i=1,\dots ,g$, and $([\alpha_i],[\alpha_j])=0$ for $i,j=1,\ldots ,g$
with $|i-j|\ge 2$.
Let $\beta_i$ be curves such that $e([\alpha_i])=[\beta_i]$, for every $i$.
Then $\beta_1,\ldots ,\beta_g$ is a chain of simple closed curves on
the right-hand side component of $M \setminus \gamma$.
\vskip .1cm
The following Artin type relations are derived in the survey  paper by Farb-Margalit (see \cite{f-m})
\begin{equation}\label{even-relation}
(t_{\alpha_1}\circ \cdots \circ t_{\alpha_g})^{2g+2}=t_{\gamma}=
(t_{\beta_1} \circ \cdots \circ t_{\beta_g})^{2g+2}.
\end{equation}
\noindent

By $\ac_1$ we denote the set of curves $\alpha_{2i-1}$, $i=1,\ldots ,g/2$,  and by  $\ac_2$ we denote the set of curves
$\alpha_{2i}$, $i=1,\ldots ,g/2$.
Similarly, we denote by $\bc_1$ the set of curves $\beta_{2i-1}$, $i=1,2,\ldots ,g/2$, and denote by $\bc_2$ the set of curves
$\beta_{2i}$, $i=1,2,\dots ,g/2$. We have $e([\ac_j])=[\bc_j]$, where $j=1,2$.

\begin{figure}
  \centering
  \includegraphics{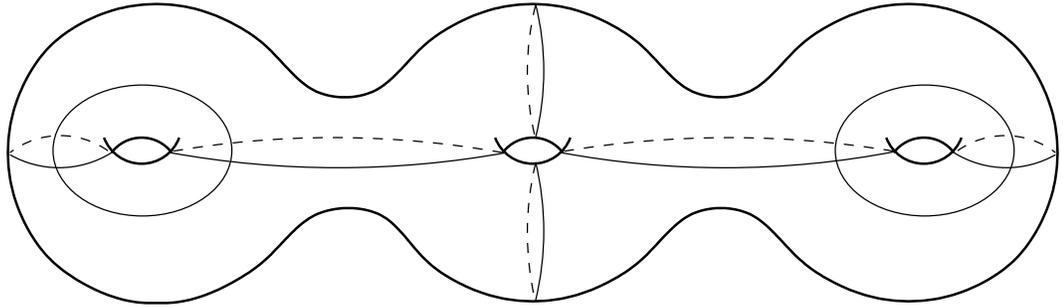}
  \caption{The genus three case.}\label{genus-3}
\end{figure}

\begin{figure}
\centering
\includegraphics{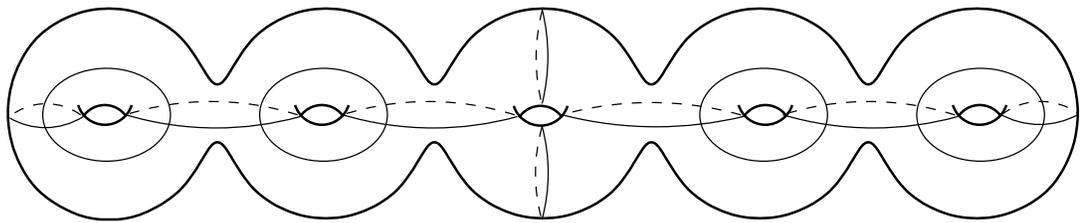}
\caption{The genus five case.}\label{genus-5}
\end{figure}

Next consider the case when $M$ is a closed surface of odd genus
$\g \ge 3$. Let $\gamma$ and $\gamma_1$ be two simple closed
non-intersecting curves on $M$ such that $M \setminus (\gamma \cup \gamma_1)$ has two components that are both homeomorphic to a twice holed
surface of genus $(g-1)/2$. Let $e \in \MC(M)$ be an involution
which interchanges the two components of $M \setminus (\gamma \cup \gamma_1)$, and such that
$e([\gamma])=[\gamma]$, and $e([\gamma_1])=[\gamma_1]$ (there are many such involutions and we fix one of them from now on).
Similarly as above (see Figure \ref{genus-3} and   Figure \ref{genus-5})  let $\alpha_1,\ldots ,\alpha_{g}$ be a chain of simple closed curves on
the left-hand side component of $M \setminus (\gamma \cup \gamma_1)$. Let $\beta_i$ be such that
$[\beta_i]=e([\alpha_i])$. Then $\beta_1,\ldots ,\beta_{g}$ is
a chain of simple closed curves on the right-hand side  component of
$M \setminus (\gamma\cup\gamma_1)$. We have the relations (see
\cite{f-m})
\begin{equation}\label{odd-relation}
(t_{\alpha_1} \circ \cdots  \circ t_{\alpha_g})^{g+1}=t_{\gamma}
t_{\gamma_1}=(t_{\beta_1} \circ \cdots \circ t_{\beta_g})^{g+1}.
\end{equation}

We denote by $\ac_1$ the set of curves $\alpha_{2i-1}$ for
$i=1,\ldots ,(g+1)/2$.  By $\ac_2$ the set of curves
$\alpha_{2i}$ for $i=1,\ldots ,(g-1)/2$ together with the curve $\gamma_1$ as
in Figure \ref{genus-3} and  Figure \ref{genus-5}. Similarly, we denote by $\bc_1$ the
set of curves $\beta_{2i-1}$ for $i=1,2,\ldots ,(g+1)/2$, and denote
by $\bc_2$ the set of curves $\beta_{2i}$ for $i=1,2,\dots,(g-1)/2$ together with the curve $\gamma_1$.
Then $e([\ac_i])=[\bc_i]$.
\vskip .1cm
The relations (\ref{even-relation}) and (\ref{odd-relation}) are the nontrivial relations we use in the paper. Beside this we will frequently use the following. If $\alpha$ and $\beta$ are simple closed curves such that $([\alpha],[\beta])=0$ then $t_{\alpha}$
and $t_{\beta}$ commute. Also, if
$f \in \MC(M)$ and $[\beta]=f([\alpha])$ then $f^{-1} \circ t_{\beta} \circ f=t_{\alpha}$.
\vskip .1cm
The strategy of our proof is that under the assumption that a section $\E:\MC(M) \to \Homeo(M)$ exists,  obtain a contradiction with the relations introduced above.

\subsection{The minimal decomposition}

Let $M$ be a closed surface of genus $\g \ge 2$.  First we recall several definition and results from \cite{ma} that are need in this paper.

\begin{definition}\label{upper-semi-continuous} Let $\Se$ be a collection of closed, connected
subsets of $M$. We say that $\Se$ is an upper semi-continuous decomposition of
$M$ if the following holds:
\begin{enumerate}
\item  If $S_1,S_2 \in \Se$, then $S_1 \cap S_2=\emptyset$.
\item If $S \in \Se$, then the set $M \setminus S$ does not contain a connected component that is simply connected.
\item We have

$$
M=\bigcup_{S \in \Se} S.
$$

\item   If $S_n \in \Se$, $n \in \N$, is a sequence that has the Hausdorff
limit $S_0$, then there exists $S \in \Se$ such that $S_0 \subset S$.

\end{enumerate}
\end{definition}

From now on by $\Se$ we always denote an upper semi-continuous decomposition of $M$.
Recall that a component  $S \in \Se$ is said to be acyclic if there is a simply connected
open set $U \subset M$ such that $S \subset U$ and $U \setminus S$ is homeomorphic to an annulus.
For every point $p \in M$, there exists a unique component in $\Se$ that contains $p$. We denote this component by $S_p \in \Se$.
The set of all points $p \in M$ such that the corresponding $S_p$ is acyclic is denoted by $M_{\Se}$.
The set  $M_{\Se}$ is  open  and every connected component of $M_{\Se}$
is a proper subsurface of $M$, which  represents the interior of a compact subsurface of $M$ with
finitely many ends.

\begin{definition}\label{admissible}
Let $\Se$ be an upper semi-continuous decomposition of $M$. Let $G$ be a
subgroup of $\Homeo(M)$. We say that $\Se$ is admissible for the group $G$ if the following
holds.

\begin{enumerate}
\item Each $\wh{f} \in G$ preserves setwise every component of $\Se$.

\item Let $S \in \Se$. Then every point, in every frontier component of
the surface $M \setminus S$, is a limit of points from $M \setminus S$
that belong to acyclic components of $\Se$ (note that not every point of $S$ need to be
in a frontier component of the subsurface $M \setminus S$).
\end{enumerate}

If $G$ is a cyclic group generated by a homeomorphism $\wh{f}:M \to M$ we
say that $\Se$ is an admissible decomposition for $\wh{f}$.
\end{definition}

For a generic homeomorphism $\wh{f}:M \to M$, the only admissible
decomposition is the trivial one, which is the one that contains only one
set, namely $M$ itself.

\begin{definition}\label{minimal-decomposition}
An admissible decomposition $\Se$ for a group $G$ will be called the
$minimal$ $decomposition$ for $G$
if $\Se$ is contained in every admissible decomposition for $G$.
\end{definition}

We have \cite{ma}

\begin{theorem}\label{unique-minimal-decomposition} Every group $G <\Homeo(M)$ has the unique minimal decomposition $\Se(G)$.
That is, if $\Se$ is an admissible decomposition for $G$ then for every $p \in M$ we have $S_p(G) \subset S_p$, where $S_p(G) \in \Se(G)$ and
$S_p \in \Se$.
\end{theorem}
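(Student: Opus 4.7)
The plan is to construct $\Se(G)$ as the common refinement of all admissible decompositions for $G$, and then verify this refinement is itself admissible. The family of admissible decompositions is nonempty since the trivial decomposition $\{M\}$ qualifies: condition (2) of Definition \ref{upper-semi-continuous} is vacuous and $G$ preserves $M$. For each $p \in M$, let $S_p(G)$ be the connected component containing $p$ of the intersection $\bigcap_\Se S_p$, where $\Se$ ranges over all admissible decompositions of $G$ and $S_p \in \Se$ is the element containing $p$; set $\Se(G) = \{S_p(G) : p \in M\}$.

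First I would check $\Se(G)$ is a partition: if $q \in S_p(G)$, then $q \in S_p$ for every admissible $\Se$, so the defining intersections for $S_p(G)$ and $S_q(G)$ coincide, and both are connected sets containing $q$, forcing equality. Properties (1) and (3) of Definition \ref{upper-semi-continuous} are then automatic, and property (4) is inherited from each $\Se$: if $S_{p_n}(G) \to S_0$ in the Hausdorff sense, then for each admissible $\Se$ the enclosing components $S_{p_n} \in \Se$ pass along a subsequence to a Hausdorff limit containing $S_0$ and sitting inside a single component of $\Se$, so intersecting over all $\Se$ places $S_0$ inside one $S_p(G)$. The main obstacle is property (2): even though each $M \setminus S_p$ has no simply connected complementary component, the smaller $S_p(G)$ could in principle be surrounded by a disk. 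I would resolve this by enlarging $S_p(G)$ to its disk-filled version, adjoining the closures of all simply connected complementary components. The key point for consistency is that the boundary of any such disk is a loop in $S_p(G) \subset S_p$, and since $M \setminus S_p$ has no simply connected complementary component, the bounded disk already lies in $S_p$, so the filled set continues to refine every admissible $\Se$.

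For admissibility of $\Se(G)$, each $\wh f \in G$ preserves every admissible $\Se$ componentwise, hence preserves the intersection $\bigcap_\Se S_p$ setwise; mapping connected components to connected components gives $\wh f(S_p(G)) = S_{\wh f(p)}(G)$, which is condition (1) of Definition \ref{admissible}. Condition (2), that every frontier point of $M \setminus S_p(G)$ is a limit of acyclic components of $\Se(G)$, is extracted by a diagonal argument from the corresponding property for each admissible $\Se$, together with the fact that any element of $\Se(G)$ contained in an acyclic element of some $\Se$ is again acyclic after suitably shrinking the surrounding disk. Minimality is automatic: by construction $S_p(G) \subset S_p$ for every admissible $\Se$ and every $p$, which is exactly the containment asserted in the theorem and yields both existence and uniqueness of the minimal decomposition.
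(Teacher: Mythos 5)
The paper itself does not prove Theorem~\ref{unique-minimal-decomposition}; it cites the result from \cite{ma}, so there is no in-paper proof to compare against. Your overall strategy --- taking $S_p(G)$ to be the connected component containing $p$ of $\bigcap_{\Se}S_p$ over all admissible $\Se$, then verifying that the resulting family is an admissible decomposition refined by none --- is the natural construction and is almost certainly the right shape of argument, and your partition check and minimality/uniqueness observation are fine.

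There is, however, a genuine flaw in the verification of property~(2) of Definition~\ref{upper-semi-continuous}. You assert that ``the boundary of any such disk is a loop in $S_p(G)\subset S_p$''; this is simply false --- the frontier of a simply connected complementary component $D$ of $S_p(G)$ is in general a complicated continuum, not a Jordan curve. The correct reason $D\subset S_p$ is a separation argument: if some component $C$ of $M\setminus S_p$ were contained in $D$, then by property~(2) for $\Se$ the set $C$ is not simply connected, hence contains a simple closed curve $\ell$ essential in $C$; since $D$ is an open disk, $\ell$ bounds a disk $\Delta$ with $\overline\Delta\subset D$, and essentialness of $\ell$ in $C$ gives a point $x\in\Delta\setminus C$. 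But $M\setminus C=S_p\cup(\text{other components of }M\setminus S_p)$ is connected and contains both $x$ and $p$ (note $p\notin D\supset\overline\Delta$), so a continuum in $M\setminus C$ joins $x$ to $p$, and it must cross $\ell\subset C$ --- a contradiction. Once this is in place you should also notice that no disk-filling is available at all: the argument gives $D\subset S_p$ for \emph{every} admissible $\Se$, hence $\overline D\subset\bigcap_\Se S_p$; since $\partial D\subset S_p(G)$ and $S_p(G)$ is already an entire connected component of this intersection, we would get $\overline D\subset S_p(G)$, contradicting $D\cap S_p(G)=\emptyset$. So $M\setminus S_p(G)$ has no simply connected component to begin with and your proposed filling is vacuous --- which is fortunate, since filling a large disk into $S_p(G)$ could otherwise swallow some $S_q(G)$ and destroy the partition. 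Finally, your verifications of property~(4) and of admissibility condition~(2) are too loose to stand as written: in both you pass to data that depend on the auxiliary decomposition $\Se$ (a Hausdorff-convergent subsequence in (4), a decomposition $\Se_n$ witnessing $y_n\notin\bigcap_\Se S_p$ in (2)) and then quietly take a limit; each step can be repaired (for (4) by arguing pointwise that every $q\in S_0$ lies in $S_{p_0}\in\Se$ for each $\Se$, and for (2) by locating a frontier point of $S_p^{(n)}$ between $y_n$ and $x$ and then invoking admissibility of $\Se_n$), but as stated they are gaps rather than proofs.
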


Assuming that the section $\E:\MC(M) \to \Homeo(M)$ exists, we have the following lemma.

\begin{lemma}\label{acyclic-component}
Let $\ac$ denote a set of simple, closed, and  mutually disjoint curves on $M$, such that no two curves are homotopic and no curve is
homotopically  trivial. Let $G<\Homeo(M)$ be the group generated by $\E(t_{\alpha})$, where $\alpha \in \ac$, and let $\Se$ denote the minimal decomposition for $G$.  Suppose that $R$ is a connected component of the open  set obtained by removing the collection of curves $\ac$ from $M$.
If $R$  has  the negative Euler characteristic then there exists a unique component of $M_{\Se}$ homotopic to $R$.
\end{lemma}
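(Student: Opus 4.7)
The plan is to exhibit an admissible decomposition $\Se^{*}$ for $G$ whose acyclic part, inside each component $R$ of $M\setminus \ac$ with $\chi(R)<0$, contains an open essential subsurface homotopic to $R$, and then to transfer this to the minimal decomposition $\Se = \Se(G)$ using Theorem \ref{unique-minimal-decomposition}. Since the curves of $\ac$ are pairwise disjoint, the Dehn twists $t_{\alpha}$ commute pairwise in $\MC(M)$; as $\E$ is a homomorphism, $G$ is an abelian subgroup of $\Homeo(M)$, and this commutativity is the main tool that will let us amalgamate the dynamical information coming from the individual generators into a single $G$-invariant structure.

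For each $\alpha \in \ac$ we apply the single twist analysis from Section 3 to the cyclic group $\langle \E(t_{\alpha})\rangle$: this should produce a closed essential annulus $A_{\alpha}\subset M$ freely homotopic to $\alpha$ which is invariant under $\E(t_{\alpha})$, together with an admissible decomposition for $\langle \E(t_{\alpha})\rangle$ whose only non-acyclic component is $A_{\alpha}$ and whose acyclic components fill in the open subsurfaces of $M\setminus A_{\alpha}$, each homotopic to a component of $M\setminus \alpha$. Commutativity forces $\E(t_{\beta})$ to send every admissible decomposition for $\langle \E(t_{\alpha})\rangle$ to another one, and then applying Theorem \ref{unique-minimal-decomposition} both to $\E(t_{\beta})$ and to $\E(t_{\beta})^{-1}$ shows that $\E(t_{\beta})$ permutes the components of the minimal such decomposition; since $A_{\alpha}$ is the unique essential non-acyclic component, this yields $\E(t_{\beta})(A_{\alpha})=A_{\alpha}$, so $A_{\alpha}$ is $G$-invariant. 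Because the curves of $\ac$ are pairwise disjoint and pairwise non-homotopic, the annuli $A_{\alpha}$ can be chosen pairwise disjoint. Write $V_{R}$ for the component of $M\setminus\bigcup_{\alpha\in \ac} A_{\alpha}$ corresponding to a given $R$ of negative Euler characteristic, so that $V_{R}$ is an open essential subsurface homotopic to $R$; the required $\Se^{*}$ is then obtained by taking each $A_{\alpha}$ as a single non-acyclic component and decomposing each $V_{R}$ into acyclic pieces (inherited from the single-twist decompositions on each side of the adjacent $A_{\alpha}$), with continua on the frontier chosen so as to satisfy condition (2) of Definition \ref{admissible}.

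By Theorem \ref{unique-minimal-decomposition}, $S_{p}(G)\subset S_{p}^{*}$ for every $p\in V_{R}$, so $S_{p}(G)$ is acyclic and $V_{R}\subset M_{\Se}$; conversely, the $G$-invariant twist dynamics on $A_{\alpha}$ prevent any point of $A_{\alpha}$ from lying in an acyclic component of $\Se$, so the connected component of $M_{\Se}$ containing $V_{R}$ is exactly $V_{R}$, which is homotopic to $R$. Uniqueness follows because any two components of $M_{\Se}$ both homotopic to $R$ would be disjoint essential subsurfaces of $V_{R}$ each isotopic to $R$, but since $V_{R}$ itself is homotopic to $R$ there is no room for two such disjoint copies inside it. The principal technical obstacle is establishing the $G$-invariance and the pairwise disjointness of the annuli $A_{\alpha}$, which is the place where we rely most heavily on the single-twist results of Section 3 combined with the uniqueness part of Theorem \ref{unique-minimal-decomposition}.
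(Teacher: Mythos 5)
Your proposal takes a genuinely different route from the paper's, but it contains a gap that I don't see how to close. The paper's proof is a short argument built on a single strong external input: Theorem 4.1 of \cite{ma}, which says (roughly) that if $T\subset R$ is a one-holed torus or a four-holed sphere, then, because $T$ carries an Anosov diffeomorphism, the minimal decomposition $\Se$ must contain acyclic pieces filling out a subsurface $T_1\subset M_{\Se}$ homotopic to $T$. The lemma is then proved by covering every essential simple closed curve of $R$ by such a $T$ and amalgamating. Your proposal does not use this input at all; instead it tries to produce, for each $\alpha\in\ac$, a $G$-invariant essential annulus $A_\alpha$ via ``the single twist analysis from Section 3,'' and then to build an admissible decomposition $\Se^*$ with the $A_\alpha$ as its non-acyclic part. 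This is where it breaks down.

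First, Section 3 does not construct any annulus in $M$: it is devoted to translation/twist numbers, the long-range Lipschitz property, and idle sets, all for a homeomorphism of a fixed annulus or strip already given. The mechanism in the paper that actually produces a canonical invariant annulus (the characteristic annulus $A_{\chara}$ in Section 4.1) is built \emph{from} Lemma \ref{acyclic-component} and its output $M_{\Se(\gamma)}$, so invoking anything of that flavor here would be circular. Second, even granting the existence and pairwise disjointness of $G$-invariant annuli $A_\alpha$, your construction of $\Se^*$ is not carried out: ``decomposing each $V_R$ into acyclic pieces inherited from the single-twist decompositions'' is not well-defined (the single-twist decompositions for different $\alpha$ need not be compatible on $V_R$), and condition (1) of Definition \ref{admissible} --- that \emph{every} $\wh f\in G$ setwise fixes \emph{every} component of $\Se^*$ --- is precisely the hard constraint and is nowhere verified. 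Producing such an admissible $\Se^*$ with acyclic part filling $V_R$ is essentially equivalent in difficulty to the statement you are trying to prove. The commutativity/permutation observation and the transfer via Theorem \ref{unique-minimal-decomposition} (using condition (2) of Definition \ref{upper-semi-continuous} to see that a closed connected subset of an acyclic set is again acyclic) are fine, but they rest on a foundation that has not been built. You would need either to reprove or cite the Anosov-subsurface result that the paper relies on.
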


\begin{proof} The set obtained by removing the collection of curves $\ac$ from $M$, is a finite and disjoint  union of surfaces with boundary. That is, each such surface is obtained by removing disjoint discs from closed surface. Assume that $R$ is one of these surface with boundary, and assume that $R$ has the negative Euler characteristic. We will show that there exists a component of $M_{\Se}$ that is homotopic to $R$.
\vskip .1cm
Let  $T \subset R$, be a subsurface of $R$, that is either homeomorphic  to a torus minus a disc, or to a sphere minus four disc. Since there exists an Anosov diffeomorphism on such $T$ it follows from Theorem 4.1 in \cite{ma} that we can find a surface $T_1 \subset M_{\Se}$ that is homotopic to $T$.

\begin{remark} In \cite{ma} this was proved in the case when $T$ is a torus minus a disc, but the proof is the same when $T$ is a sphere with four holes.
This lemma can be proved in a similar way by using the results from \cite{f-h-1}.
\end{remark}

Let $\alpha$ be a simple closed curve in $R$, that is not homotopic to an end of $R$, and that is not homotopically trivial. Since $R$  has the negative Euler characteristic we can find a surface $T \subset R$, that is either homeomorphic to a  torus minus a disc, or to a sphere minus four disc, and such that $T$ contains a simple closed curve $\alpha_1$ that is homotopic to $\alpha$. This shows that for every such curve $\alpha \subset R$ there exists a curve $\alpha_1$ that is homotopic to $\alpha$, and that belongs to $M_{\Se}$. This implies that there  exists a component of $M_{\Se}$ that contains a curve homotopic to any simple closed curve on $R$. Denote this component by $M_1$. We see that $M_1$ is homotopic to $R$. The uniqueness is obvious.
\end{proof}

\vskip .1cm
Recall the following definition.

\begin{definition}\label{triod} Let $K \subset M$ be a closed and  connected set. We say that $K$ is a triode if there exists a connected closed set $K_1 \subset K$, such that $K \setminus K_1$ has at least three connected components.
\end{definition}

The Moore's triode theorem says that any open subset of $M$ can contain at most countably many disjoint triodes (see \cite{mo}), \cite{mo-1}).

\begin{lemma}\label{triod-lemma} Let $F$ and $G$ be two groups of homeomorphisms of $M$ such that $\wt{f}$ commutes with $\wt{g}$ for every
$\wt{f} \in F$ and $\wt{g} \in G$. Denote by $\Se(F)$ and $\Se(G)$ the minimal decompositions that correspond to the groups $F$ and $G$ respectively.
Let $\Gamma$ be the group generated by the elements from $F$ and $G$ and let $\Se(\Gamma)$ be the corresponding minimal decomposition. By $M_{\Se(\Gamma)}$ we denote the set of all points that are contained in acyclic components of $\Se(\Gamma)$. Let $p \in M_{\Se(\Gamma)}$ and assume that $p$ does not belong to the interior of $S_p(\Gamma) \in \Se(\Gamma)$. Then at least one of the following two statements holds
\begin{itemize}
\item $\wt{f}(S_p(G))=S_p(G)$, for every $\wt{f} \in F$.
\item $\wt{g}(S_p(F) )=S_p(F)$, for every $\wt{g} \in G$.
\end{itemize}
(recall that  $S_p$ denotes the component from $\Se$ that contains the point $p$).
\end{lemma}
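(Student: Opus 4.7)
The plan is to use the commutativity of $F$ and $G$ together with minimality of $\Se(F),\Se(G),\Se(\Gamma)$ as a ``permutation principle'', and then to argue by contradiction via the Moore triod theorem. First, for $\wt{g}\in G$, the push-forward $\wt{g}(\Se(F)):=\{\wt{g}(S):S\in\Se(F)\}$ is again an admissible decomposition for $F$: commutativity gives $\wt{f}\wt{g}(S)=\wt{g}\wt{f}(S)=\wt{g}(S)$ for every $\wt{f}\in F$, and the frontier condition of Definition \ref{admissible}(2) is topological and is preserved by the homeomorphism $\wt{g}$, which sends acyclic pieces to acyclic pieces. Minimality of $\Se(F)$, applied both to $\wt{g}(\Se(F))$ and to $\wt{g}^{-1}(\Se(F))$, forces $\wt{g}(S_q(F))=S_{\wt{g}(q)}(F)$ for every $q\in M$; symmetrically, each $\wt{f}\in F$ permutes the pieces of $\Se(G)$. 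The same minimality principle, applied to $\Se(\Gamma)$ (which is automatically admissible for both $F$ and $G$), yields the nesting $S_q(F),S_q(G)\subset S_q(\Gamma)$ at every point.

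Assume for contradiction that both alternatives fail. Pick $\wt{f}_0\in F$ with $B':=\wt{f}_0(S_p(G))\ne S_p(G)$ and $\wt{g}_0\in G$ with $A':=\wt{g}_0(S_p(F))\ne S_p(F)$, and set $A=S_p(F)$, $B=S_p(G)$. The permutation principle gives $A\cap A'=B\cap B'=\emptyset$, while the $F$-invariance of $A$, the $G$-invariance of $B$ and the commutativity of $F$ and $G$ imply that the four cross-intersections are all nonempty: $p\in A\cap B$, $\wt{f}_0(p)\in A\cap B'$, $\wt{g}_0(p)\in A'\cap B$, $\wt{f}_0\wt{g}_0(p)\in A'\cap B'$. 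By the nesting step, all four continua lie inside the acyclic piece $S_p(\Gamma)$, which is cellular in a simply-connected open set $U\subset M$ with $U\setminus S_p(\Gamma)$ an annulus. We can therefore reason about this ``square'' of four intersecting continua using plane topology inside the disc $U$.

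The final step is to convert the square, combined with the hypothesis $p\notin\inte S_p(\Gamma)$, into a contradiction with Moore's triod theorem. Definition \ref{admissible}(2) yields a sequence $q_n\to p$ with each $q_n$ in an acyclic component of $\Se(\Gamma)$ different from $S_p(\Gamma)$, hence lying in the annular region $U\setminus S_p(\Gamma)$; these pieces provide an ``outward'' branch at $p$ complementing the two ``inward'' branches $B\setminus A$ and $B'\setminus A$ attached to the connected continuum $A$, giving a triod based at $p$. By translating this construction through $\Gamma$ and through the uncountable frontier $\partial S_p(\Gamma)$ --- using the cellular structure of $S_p(\Gamma)$ in $U$ and the accumulation of acyclic pieces at $\partial S_p(\Gamma)$ from the annular side --- one aims to produce an uncountable pairwise disjoint family of triods in $U$, contradicting Moore's theorem. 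The principal obstacle is the plane-topology bookkeeping needed to verify genuine disjointness and genuine triod structure of this uncountable family: one must use the upper semi-continuity of $\Se(F),\Se(G),\Se(\Gamma)$ to rule out spurious reconnections between the three branches of a candidate triod, and must produce uncountably many such triods rather than just the $\Gamma$-orbit of the original configuration.
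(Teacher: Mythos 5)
Your preliminary steps are correct and match the paper: commutativity implies each $\wt{g}\in G$ permutes the components of $\Se(F)$ (and symmetrically), and the minimality of $\Se(F),\Se(G)$ against the $F$- and $G$-admissible decomposition $\Se(\Gamma)$ gives the nesting $S_q(F),S_q(G)\subset S_q(\Gamma)$. However, the core of your argument---the triod construction---has a genuine gap.

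Your configuration $A=S_p(F)$, $A'=\wt{g}_0(A)$, $B=S_p(G)$, $B'=\wt{f}_0(B)$ only yields two branches, not three. Taking $K=A\cup B\cup B'$ and $K_1=A$, the set $K\setminus K_1=(B\setminus A)\sqcup(B'\setminus A)$ has at least two components, but a triod requires three, and there is no reason for $B\setminus A$ to be disconnected. Your proposed ``outward'' third branch is a sequence of points $q_n\to p$ lying in the annular region $U\setminus S_p(\Gamma)$; this is not a continuum, so it cannot serve as a branch of a closed connected triod in the sense of Definition \ref{triod}. What is missing is the step the paper supplies: one must find a \emph{third} $G$-component $S_{p_2}(G)$ meeting $S_p(F)$ and disjoint from both $S_p(G)$ and $S_{p_1}(G)$ (where $p_1\in S_p(F)\setminus S_p(G)$). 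The paper gets this from a connectedness argument---if $S_p(F)\subset S_p(G)\cup S_{p_1}(G)$ then $S_p(F)\cap S_{p_1}(G)$ would be nonempty, relatively open and relatively closed in the connected set $S_p(F)$, forcing $S_p(F)=S_p(F)\cap S_{p_1}(G)$, contradicting $p\in S_p(G)$. With three disjoint $G$-components meeting $S_p(F)$ (and none contained in $S_p(F)$, which follows from the assumption $\wt{g}_0(S_p(F))\ne S_p(F)$ and the permutation principle), the union $Y=S_p(F)\cup S_p(G)\cup S_{p_1}(G)\cup S_{p_2}(G)$ is a genuine triod contained in the single acyclic piece $S_p(\Gamma)$.

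Your endgame is also off track. The paper does \emph{not} attempt to exhibit uncountably many disjoint triods by pushing the configuration around with $\Gamma$ and the boundary; that would be both delicate and unnecessary. Instead it observes that Moore's theorem limits to countably many the acyclic components of $\Se(\Gamma)$ that can contain a triod, that the set $Z$ of points where at least one alternative of the lemma holds is relatively \emph{closed} in $M_{\Se(\Gamma)}$ (an upper semi-continuity argument), and that the admissibility condition, Definition \ref{admissible}(2), provides a sequence $p_n\to p_0$ with each $p_n$ in an acyclic component different from $S_{p_0}(\Gamma)$---from which one extracts $p_n\in Z$, so $p_0\in Z$. You never invoke the closedness of $Z$, and the ``uncountably many disjoint triods'' you would need is exactly what you concede you cannot yet produce, so the contradiction is not actually reached.
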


\begin{proof} Assume that $M_{\Se(\Gamma)}$ is non-empty (otherwise the lemma is trivial). Then by the minimality we have
$M_{\Se(\Gamma)} \subset (M_{\Se(F)} \cap M_{\Se(G)})$.  Let $p \in M_{\Se(\Gamma)}$ and assume that
\begin{equation}\label{triod-equation}
\wt{f}(S_p(G)) \ne S_p(G), \quad \text{and} \quad  \wt{g}(S_p(F)) \ne S_p(F),
\end{equation}
\noindent
for some $\wt{f} \in F$ and $\wt{g} \in G$. Since the groups $F$ and $G$ commute we have that $\wt{f}$ respects the minimal decomposition $\Se(G)$ and
that $\wt{g}$ respects the minimal decomposition $\Se(F)$.
This implies that no component $S_q(G) \in \Se(G)$ is  a subset of $S_p(F)$
(if $S_q(G) \subset S_p(F)$ then $\wt{g}(S_p(F))=S_p(F)$). Similarly we have that $S_p(F)$ is not a subset of $S_p(G)$.
Therefore we can find a point $p_1 \in S_p(F)$ such that $p_1$ does not belong to $S_p(G)$. We have that $S_p(G)$ and $S_{p_{1}}(G)$
are different components from $\Se(G)$ and therefore they are mutually disjoint.
\vskip .1cm
We already observed that $S_p(F)$ is not a subset of any $S_q(G)$. We show that $S_p(F)$ is not a subset of $S_p(G) \cup S_{p_{1}}(G)$. Consider the set $X=S_p(F) \setminus S_p(G)$. Then $X$ is a relatively open subset of $S_p(F)$. If
$S_p(F) \subset \big( S_p(G) \cup S_{p_{1}}(G) \big)$, and since $S_p(G) \cap S_{p_{1}}(G)=\emptyset$ we have that
$X=S_p(F) \cap S_{p_{1}}(G)$. This implies that $X$ is both relatively open and closed in $S_p(F)$ which shows that $X=S_p(F)$. This is a contradiction so we have  that $S_p(F)$ is not a subset of $S_p(G) \cup S_{p_{1}}(G)$.
\vskip .1cm
Therefore there exists $p_2 \in S_p(F)$  such that $S_{p_{2}}(G)$ is disjoint from $S_{p}(G)$ and $S_{p_{1}}(G)$. Consider now the component $S_p(\Gamma) \in \Se(\Gamma)$. By the minimality we have that
$$
Y=S_p(F)\cup S_{p}(G) \cup S_{p_{1}}(G) \cup S_{p_{2}}(G) \subset S_p(\Gamma).
$$
\noindent
On the other hand $Y$ is a connected closed set and $Y \setminus S_p(F)$ contains at least three connected components. This shows that $Y$ is a triode and we conclude that if for some $p\in M_{\Se(\Gamma)}$ we have that (\ref{triod-equation}) holds then $S_p(\Gamma)$ contains a triode.
By the Moore's theorem there could be at most countably many such components in $\Se(\Gamma)$. On the other hand the set of points $p \in M_{\Se(\Gamma)}$ such that at least one of the two conditions from the statement of this lemma holds is relatively closed in $M_{\Se(\Gamma)}$. In particular if $p_0 \in S_p(\Gamma) \in M_{\Se(\Gamma)}$ is such that $p_0$ does not belong to the interior of $S_p(\Gamma)$, then there exists a sequence of points $p_n \in  M_{\Se(\Gamma)}$ such that at least one of the two conditions  holds at $p_n$ and $p_n \to p_0$ (this follows from the definition of admissible decompositions). Then at least one of the two conditions holds at $p_0$. This proves the lemma.

\end{proof}

\section{The twist number and the analysis on the strip}
\subsection{The Translation number}
Let $\varphi:\R \to \R$, be a homeomorphism (orientation preserving) that commutes with the translation $T(x)=x+1$. By the classical result of Poincare, the limit

$$
\rho(\varphi)=\lim_{n\to\infty} {{1}\over{n}} \varphi^{n}(x),
$$
\noindent
exists, and it does not depend on $x \in \R$. The number $\rho(\varphi)$ is called the $translation$ $number$ of $\varphi$.
\vskip .1cm
Assume that $\varphi$ has a fixed point, that is $\varphi(x_0)=x_0$ for some  $x_0 \in \R$. Then $\varphi(x_0+k)=x_0+k$ for every $k \in \Z$. Therefore for every $x \in \R$, we have $|\varphi^{n}(x)-x| <1$ for every $n \in \N$, so in this case $\rho(\varphi)=0$. If $\varphi$ does not have a fixed point then either for every $x \in \R$ we have  $\varphi(x)>x$, or for every $x \in \R$ we have $\varphi(x)<x$. Suppose that  $\varphi(x)>x$, $x \in \R$. Since $\varphi$ commutes with the translation $T(x)$, by the compactness there exists $q>0$, such that $\varphi(x)>x+q$ for every $x \in \R$. We have $\varphi^{n}(x)>x+nq$, which shows that $\rho(\varphi)>0$. Similarly, if $\varphi(x)<x$, $x \in \R$, then $\rho(\varphi)<0$. We conclude that the translation number is zero if and only if $\varphi$ has a fixed point.
\begin{remark} Since $\varphi$ commutes with $T(x)$ we have that $\varphi$ is a lift of the circle homeomorphism $\varphi_1$. The rotation number of $\varphi_1$ is defined to be the translation number of $\varphi$ modulo $1$. It is not true that the rotation number of $\varphi_1$ is equal to zero if and only if $\varphi_1$ has a fixed point on the circle. However, the classical result says that   rotation number of $\varphi_1$ is a rational number if and only if some power of $\varphi_1$ has a fixed point on the circle.
\end{remark}

Note that $\rho(\varphi^{m})=m\rho(\varphi)$, and $\rho(T^{m} \circ \varphi)= \rho(\varphi \circ T^{m})=\rho(\varphi)+m$, for any $m \in \Z$.

\begin{proposition}\label{uniform-translation} Let $\varphi:\R \to \R$, be a homeomorphism (orientation preserving) that commutes with the translation $T(x)$. Then for every $x \in \R$, and for every $n \in \N$, we have
$$
|(\varphi^{n}(x)-x)-n\rho(\varphi)|<3.
$$
\end{proposition}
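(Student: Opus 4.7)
The plan is to bound the quantity $F_n(x) := \varphi^n(x) - x$ by pinning down both its oscillation in $x$ (for fixed $n$) and its comparison with $n\rho(\varphi)$. Since $\varphi^n$ also commutes with $T$, the function $F_n$ is $1$-periodic, hence attains its extrema
\[
A_n := \sup_{x \in \R} F_n(x), \qquad B_n := \inf_{x \in \R} F_n(x).
\]
So the task reduces to (i) bounding $A_n - B_n$, and (ii) squeezing $n\rho(\varphi)$ between $B_n$ and $A_n$.

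First I would show that $A_n - B_n \le 1$, in fact $A_n - B_n < 1$ unless $F_n$ is constant. Fix a lift $g = \varphi^n$. If $x_0,y_0$ realize the sup and inf, we may assume by $1$-periodicity that $|x_0 - y_0| < 1$. Since $g$ is increasing and commutes with $T$, it maps $[x_0, x_0 + 1]$ onto $[g(x_0), g(x_0)+1]$, so $|g(x_0) - g(y_0)| < 1$. A one-line algebraic manipulation with $F_n(x_0) - F_n(y_0) = (g(x_0)-g(y_0)) - (x_0 - y_0)$ then gives $A_n - B_n < 1$. (This is the only step that really uses that $\varphi$ is an orientation-preserving homeomorphism.)

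Second I would set up the subadditive cocycle
\[
F_{m+n}(x) = F_m(\varphi^n(x)) + F_n(x),
\]
which yields $A_{m+n} \le A_m + A_n$ and, symmetrically, $B_{m+n} \ge B_m + B_n$. Fekete's lemma then asserts
\[
\lim_{n \to \infty} \frac{A_n}{n} = \inf_{n \ge 1} \frac{A_n}{n}, \qquad \lim_{n \to \infty} \frac{B_n}{n} = \sup_{n \ge 1} \frac{B_n}{n}.
\]
Both limits must equal $\rho(\varphi)$: indeed $B_n \le F_n(0) \le A_n$ and $A_n - B_n < 1$ force $A_n/n$ and $B_n/n$ to have the same limit as $F_n(0)/n$, which is $\rho(\varphi)$ by definition. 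Consequently $A_n \ge n\rho(\varphi) \ge B_n$ for every $n$.

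Combining the two steps, for arbitrary $x$ and $n$,
\[
B_n \le F_n(x) \le A_n \quad \text{and} \quad B_n \le n\rho(\varphi) \le A_n,
\]
so $|F_n(x) - n\rho(\varphi)| \le A_n - B_n < 1 < 3$, which is the desired estimate (with room to spare, explaining why the authors are comfortable stating the bound $3$). No step is really a serious obstacle; the only place to exercise care is the oscillation estimate $A_n - B_n < 1$, since it is the sole point at which monotonicity of $\varphi$ enters the argument.
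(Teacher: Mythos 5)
Your argument is correct, and it follows a genuinely different route from the paper. The paper argues by contradiction: if $\varphi^{n_0}(x_0)-x_0$ deviates from $n_0\rho(\varphi)$ by at least $3$, then (using the same basic observation you use, namely that a lift distorts distances by less than $1$ on any interval of length $<1$) the deviation is at least $1$ for \emph{every} $x$, so telescoping along the orbit $x_0, \varphi^{n_0}(x_0), \varphi^{2n_0}(x_0),\ldots$ forces $\lim_k \frac{1}{kn_0}(\varphi^{kn_0}(x_0)-x_0)$ to be strictly larger than $\rho(\varphi)$, contradicting the definition of the translation number. You instead exploit the subadditive cocycle structure of $F_n(x)=\varphi^n(x)-x$ together with Fekete's lemma, deducing $\inf_n A_n/n = \lim A_n/n = \rho(\varphi) = \lim B_n/n = \sup_n B_n/n$, so that $n\rho(\varphi)$ lies in $[B_n, A_n]$ for every $n$, and then close with the oscillation bound $A_n-B_n<1$. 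Both proofs rest on the same two facts (periodicity of $F_n$ and the distortion-less-than-$1$ estimate on unit intervals), but your packaging via Fekete is cleaner and in fact yields the sharp constant $1$ where the paper settles for $3$; the paper's version is somewhat more hands-on and avoids invoking Fekete. Either bound is more than adequate for the way the proposition is used later (it only matters that the deviation is bounded uniformly in $n$).
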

\begin{remark} This proposition shows that given a compact set $A \subset \R$, the sequence ${{1}\over{n}} \varphi^{n}(x)$ converges uniformly to $\rho(\varphi)$, for every $x \in A$, regardless of the choice of the homeomorphism $\varphi$.
\end{remark}

\begin{proof} Assume that for some $x_0 \in \R$, and some $n_0 \in \N$, we have
$$
|(\varphi^{n_{0}}(x)-x)-n_{0}\rho(\varphi)| \ge 3.
$$
\noindent
Then  either
$$
\varphi^{n_{0}}(x_0)-x_0>n_{0}\rho(\varphi) + 3,
$$
\noindent
or
$$
\varphi^{n_{0}}(x_0)-x_0<n_{0}\rho(\varphi)-3.
$$
\noindent
Consider the first case. Note that for every $x_1,x_2 \in \R$, with $|x_1-x_2|<1$, we have $|\varphi^{n}(x_1)-\varphi^{n}(x_2)|<1$, for every $n \in \N$.
This implies that for every $x \in \R$, we have
$$
\varphi^{n_{0}}(x)-x > (\varphi^{n_{0}}(x_0)-x_0)-2 \ge n_{0}\rho(\varphi)+3-2=n_{0}\rho(\varphi)+1.
$$
\noindent
This yields that for every $j \in \N$, by setting $x=\varphi^{(j-1)n_{0}}(x_0)$, the above inequality yields
$$
\varphi^{jn_{0}}(x_0)-\varphi^{(j-1)n_{0}}(x_0) > n_{0}\rho(\varphi)+1.
$$
\noindent
Let $k \in \N$, and $1 \le j \le k$. We sum up all the above inequalities for $1 \le j \le k$, and get
$$
\varphi^{kn_{0}}(x_0)-x_0 > kn_{0}\rho(\varphi)+k.
$$
\noindent
Letting $k \to \infty$, we obtain that
$$
\lim_{k \to \infty} {{1}\over{kn_{0}}}(\varphi^{kn_{0}}(x_0)-x_0) \ge \rho(\varphi)+{{1}\over{n_{0}} }>\rho(\varphi).
$$
\noindent
This is a contradiction. The second case is handled in the same way.
\end{proof}

\subsection{The twist number of an annulus homeomorphism}

In this section $z$ and $w$ represent complex variables in the complex plane $\C$. We have $Re(z)=x$ and $Im(z)=y$, that is $z=x+iy$.
Let $N(r)=\{ w \in \C: {{1}\over{r}}<|w|<r \}$, be the geometric annulus in the complex plane $\C$.
By $P(r)=\{x+iy=z \in \C: |y| < {{\log r}\over {2\pi}} \}$, we denote the geometric strip in $\C$. By $\overline{N(r)}$ and $\overline{P(r)}$, we denote the corresponding closures of $N(r)$ and $P(r)$ in the complex plane $\C$.

\begin{remark} We point out that $\overline{P(r)}$ is the closure of $P(r)$ in $\C$, that is $\infty$ does not belong to $\overline{P(r)}$.
\end{remark}

Let $\ph_0(N(r))=\{ w \in \C: |w|={{1}\over{r}}\}$, and  $\ph_1(N(r))=\{ w \in \C: |w|=r \}$. Similarly, set
$\ph_0(P(r))=\{ z \in \C : y=-i{{\log r}\over{2\pi}}\}$, and  $\ph_1(P(r))=\{ z \in \C: y=i{{\log r}\over{2\pi}}\}$.
The map given by $w=e^{-2\pi iz}$, is a holomorphic covering of the annulus $N(r)$ by the strip $P(r)$. Note that for every $r$, the covering group that acts on $P(r)$ is generated by the translation $T(z)=z+1$.
\vskip .1cm
By $\wt{e}:\overline{N(r)} \to \overline{N(r)}$, we always denote  a conformal involution that exchanges the two boundary circles. There are exactly two such involutions and they are given by  $\wt{e}(w)={{1}\over{w}}$, or $\wt{e}(w)={{e^{i\pi} }\over{w}}$.  By $\wh{e}:\overline{P(r)}\to \overline{P(r)}$ we denote a lift of $\wt{e}$ to $\overline{P(r)}$. For our purposes it is important to observe that every such $\wh{e}:\overline{P(r)}\to \overline{P(r)}$
is an isometry in the Euclidean metric.
\vskip .1cm
In the remainder of this section we fix $1<r_0$ and set $N(r_0)=N$ and $P(r_0)=P$.

\begin{definition}\label{twist-number} Let $\wh{f}:\overline{P} \to \overline{P}$, be a homemorphism that fixes setwise the boundary components of $P$, and that commutes with the translation $T(z)=z+1$. We define the $twist$  $number$  $\rho (\wh{f},P) \in \R$ as
$$
\rho (\wh{f},P)=\lim_{n\to\infty} {{1}\over{n}} \left(Re( \wh{f}^n(z_1))- Re(\wh{f}^n(z_0) ) \right),
$$
\noindent
where $z_0 \in \ph_0(P)$, and  $z_1 \in \ph_1(P)$.
\end{definition}

Note that the restriction of $\wh{f}$ to $\ph_0{P}$ is a homeomorphism, that commutes with the translation $T(z)$.
Therefore the sequence  ${{1}\over{n}} Re( \wh{f}^n(z_0))$ converges to the translation number of the corresponding homeomorphism of the real line. Moreover, this limit does not depend on the choice of $z_0 \in \ph_0(P)$. Similarly the sequence  ${{1}\over{n}} Re( \wh{f}^n(z_1))$ converges, and this limit does not depend on the choice of $z_1 \in \ph_1(P)$. This shows that $\rho (\wt{f},N)$ represents the difference in the translation numbers between the restriction of $\wh{f}$ to $\ph_1(P)$, and the restriction of  $\wh{f}$ to $\ph_0(P)$.

\begin{definition}
Let $\wt{f}:\overline{N} \to \overline{N}$, be a homemorphism that fixes setwise the boundary circles of $N$. We define the $twist$  $number$  $\rho (\wt{f},N) \in \R$ as follows. Let $\wh{f}:\overline{P} \to \overline{P}$ be a lift of $\wt{f}$. Then
$\rho (\wt{f},N)=\rho(\wh{f},P)$.
\end{definition}

Note that the assumption that $\wt{f}$ setwise preserves the boundary circles of $N$ implies that $\wh{f}$ setwise preserves the boundary lines of $P$.  Since any two lifts of $\wt{f}$ to $P$, differ by a translation, we see that $\rho (\wt{f},N)$ does not depend on the choice of the lift $\wh{f}$. For every $m \in\Z$ we have $\rho(\wt{f}^{m},N)=m\rho(\wt{f},N)$. Moreover, if $\wt{f}$ has at least one fixed point on both boundary circles then the twist number $\rho (\wt{f},N)$ is an integer. If $\wt{f}$ is homotopic to the geometric twist homeomorphism (modulo these fixed points) then
$\rho (\wt{f},N)=1$.

\begin{remark} If two homeomorphisms of $\overline{N}$ agree on the boundary of $N$, and if they are homotopic modulo the boundary, then the twists numbers agree. Moreover, our definition of the twist number of a homeomorphism of the annulus $N$, should not be confused with the standard definition of the rotation number for homeomorphisms of two dimensional domains (including the annulus) which very much depends on a particular homeomorphism, and not only on its homotopy class.
\end{remark}

Let $S$ denote  a compact Riemann surface (either closed or with  boundary).  Let $A \subset S$ be a topological annulus. Then $A$ has two ends. Moreover $A$ has two frontier components $\ph_0(A)$ and $\ph_1(A)$, each corresponding to one of the ends. Although the boundary $\ph{A}$ of $A$ is the union of $\ph_0(A)$ and $\ph_1(A)$, we do not call $\ph_0(A)$ and $\ph_1(A)$ the boundary components of $A$ because in general the sets $\ph_0(A)$ and $\ph_1(A)$ may not be disjoint. We call them frontier components of $A$.

\begin{definition}\label{twist-number-topological} Let $A \subset S$ be a topological annulus. Let $\wt{f}:\overline {S} \to \overline{S}$ be a homeomorphism such that $\wt{f}(A)=A$,
and such that $\wt{f}$ setwise fixes each of the two frontier components of $A$. Let $\Phi:N \to A$ be a conformal map and set $\wt{g}= \Phi^{-1} \circ \wt{f} \circ \Phi$.
\begin{itemize}
\item We define the twist number $\rho(\wt{f},A)$ to be equal to $\rho(\wt{g},N)$.
\item  We say that $\wt{f}$ has a conformal fixed point on  $\ph_i(A)$, $i=0,1$, if $\wt{g}$ has a fixed point on $\ph_i(N)$.
\end{itemize}

\end{definition}

Since $\wt{f}$ is a homeomorphism of $\overline{A}$ it follows that $\wt{g}$ is a homeomorphism of $\overline{N}$, so the twist number $\rho(\wt{g},N)$
is well defined. If $\wt{f}$ has conformal fixed points on both of its frontier components then $\rho(\wt{f},A)$ is an integer.

\begin{proposition}\label{uniform-twist}  Let $\wh{f}:\overline{P} \to \overline{P}$, be a homemorphism that setwise fixes the boundary lines of $P$, and that commutes with the translation $T(z)=z+1$. Then for every $z_0 \in \ph_0(P)$,  $z_1 \in \ph_1(P)$, we have
$$
\left| \big(Re( \wh{f}^n(z_1))- Re(\wh{f}^n(z_0) )\big) - (Re(z_1)- Re(z_0))  -n\rho(\wh{f},P) \right| < 6,
$$
\noindent
for every $n \in \N$.
\end{proposition}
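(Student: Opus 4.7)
The plan is to reduce this statement to Proposition \ref{uniform-translation} by analyzing $\wh{f}$ on each of the two boundary lines $\ph_0(P)$ and $\ph_1(P)$ separately. Since $\wh{f}$ setwise fixes each boundary line and commutes with $T(z)=z+1$, parameterizing $\ph_i(P)$ by its real coordinate yields a homeomorphism $\varphi_i:\R\to\R$ for $i=0,1$, each commuting with the integer translation $x\mapsto x+1$. Because $\wh{f}$ preserves $\ph_i(P)$ and the imaginary part is constant there, one has the identity $Re(\wh{f}^n(z_i))=\varphi_i^n(Re(z_i))$ for every $n$ and every $z_i\in\ph_i(P)$.

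Next I would verify that the twist number splits as a difference of one-dimensional translation numbers. From Definition \ref{twist-number} and the observation above,
$$\rho(\wh{f},P)=\lim_{n\to\infty}\tfrac{1}{n}\bigl(\varphi_1^n(Re(z_1))-\varphi_0^n(Re(z_0))\bigr)=\rho(\varphi_1)-\rho(\varphi_0),$$
where the two limits on the right exist independently by the discussion preceding Proposition \ref{uniform-translation} and are independent of the choice of base points. Consequently the expression to bound rewrites as
$$\bigl(\varphi_1^n(Re(z_1))-Re(z_1)-n\rho(\varphi_1)\bigr)-\bigl(\varphi_0^n(Re(z_0))-Re(z_0)-n\rho(\varphi_0)\bigr).$$

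Finally I would apply Proposition \ref{uniform-translation} to each of $\varphi_0$ and $\varphi_1$, which gives that each parenthesized term is bounded in absolute value by $3$, and then invoke the triangle inequality to obtain the bound $3+3=6$. There is no serious obstacle here: the proposition is essentially a corollary of Proposition \ref{uniform-translation}, and the only care needed is in confirming that the twist number decomposes as the difference $\rho(\varphi_1)-\rho(\varphi_0)$ and that the one-dimensional constant $3$ propagates to $6$ through the triangle inequality rather than getting doubled for some other reason.
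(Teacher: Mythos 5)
Your proposal is correct and follows essentially the same route as the paper: restrict $\wh{f}$ to each boundary line to get circle-lift homeomorphisms $\varphi_0,\varphi_1$, apply Proposition \ref{uniform-translation} to each (yielding the bound $3$), use $\rho(\wh{f},P)=\rho(\varphi_1)-\rho(\varphi_0)$, and conclude by the triangle inequality.
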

\begin{proof} Let $\rho_1$ denote the translation number of the homeomorphism of the real line that is the restriction of $\wh{f}$ to $\ph_1(P)$. Similarly  $\rho_0$ denotes the translation number of the homeomorphism of the real line that is the restriction of $\wh{f}$ to $\ph_0(P)$.
Then Proposition 3.1 yields
$$
\left| \big(Re( \wh{f}^n(z_1))- Re(z_1) \big)  -n\rho_1 \right| < 3,
$$
\noindent
and
$$
\left| Re( \wh{f}^n(z_0))- Re(z_0)  -n\rho_0 \right| < 3.
$$
\noindent
Since  $\rho(\wh{f},P)=\rho_1-\rho_0$,  by subtracting the second inequality from the first we obtain
$$
\left| \big(Re( \wh{f}^n(z_1))- Re(\wh{f}^n(z_0) )\big) - (Re(z_1)- Re(z_0))  -n\rho(\wh{f},P) \right| <6.
$$

\end{proof}

For two smooth oriented arcs $h_1$ and $h_2$ such that the set $h_1 \cap h_2$ has finitely many points, by $\iota(h_1,h_2)$ we denote their algebraic intersection number. Let $l$ be an oriented Jordan arc in $N$ that connects the two boundary circles  $\ph_0(N)$ and $\ph_1(N)$. It is understood that such $l$ has one endpoint on each boundary circle, and the relative interior of the arc $l$ is contained in $N$. The homotopy class (modulo the endpoints) of $l$ is the collection of all such arcs that have the same endpoints as $l$ and are homotopic to $l$ in $N$, modulo the endpoints, and that are endowed with the orientation such that the endpoints have the same order with respect to this orientation. This class of arcs is denoted by $[l]$. Given two such arcs $l_1$ and $l_2$ (that may not have the same endpoints), by  $\iota([l_1],[l_2]) \in \Z$ we denote the algebraic intersection number between the two homotopy classes. This is well defined, since we can find  smooth representatives $h_j \in [l_j]$, $j=1,2$, such that the set $h_1 \cap h_2$ has finitely manus points,  and $\iota([l_1],[l_2])$ is defined as the algebraic intersection number $\iota(h_1,h_2)$ between these smooth arcs that is $\iota([l_1],[l_2])=\iota(h_1,h_2)$.
\vskip .1cm
Similarly let $A \subset S$ be a topological annulus. Let $l$ be an oriented Jordan arc that has one endpoint in each $\ph_0(A)$ and $\ph_1(A)$ (in particular this implies that the endpoints of $l$ are accessible points in the boundary of $A$). The homotopy class $[l]$ (modulo the endpoints) of $l$ is the collection of all such arcs homotopic to $l$ in $A$, that have the same endpoints as $l$, and with the corresponding orientations.
For two such arcs $l_1$ and $l_2$, by  $\iota([l_1],[l_2]) \in \Z$ we denote the algebraic intersection number between the two homotopy classes. This is well defined for the same reasons as above. Endow $M$ with a complex structure and let  $\Phi:N \to A$ be a surjective conformal map. Then $\Phi^{-1}(l)$ is a Jordan arc that connects the two boundary circles of $N$. We have $\iota([l_1],[l_2])=\iota([\Phi^{-1}(l_1)],[\Phi^{-1}(l_2)])$.
\vskip .1cm

The following proposition is elementary. The proof is left to the reader (see Figure \ref{stripe1}).

\begin{proposition}\label{intersection-explained} Let $A \subset N$ be a topological annulus homotopic to $N$ (we allow that $A=N$). Let $l_1,l_2 \subset A$ be two oriented Jordan arcs, each of them having one of its endpoints in each $\ph_0(A)$ and $\ph_1(A)$. Let $z_1,w_1 \in \overline{P}$ be the endpoints of a lift of $l_1$ to $P$ and let  $z_2,w_2 \in \overline{P}$ be the endpoints of a lift of $l_2$ to $P$. Let $h_1 \subset P$ be any smooth oriented arc with the endpoints $z_1,w_1$ and let $h_2 \subset P$ be any smooth oriented arc with the endpoints $z_2,w_2$. Then

$$
\iota([l_1],[l_2])=\sum_{k \in \Z} \iota(T^{k}(h_1),h_2).
$$

\end{proposition}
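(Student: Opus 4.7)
My plan is to verify the identity by passing through smooth transverse representatives $\tilde{l}_1,\tilde{l}_2 \in [l_1],[l_2]$ and their canonical lifts in $P$, then using deck equivariance of the covering $\pi:P\to N$ given by $w=e^{-2\pi i z}$. First I would choose smooth representatives $\tilde{l}_1$ and $\tilde{l}_2$ in $A$ that are mutually transverse and meet $\ph_0(A)\cup\ph_1(A)$ only at their endpoints, so that $\iota([l_1],[l_2])$ equals the signed count of points in $\tilde{l}_1\cap\tilde{l}_2$. Since the lift of an arc in $A\subset N$ to $P$ is uniquely determined by a choice of starting endpoint, and since homotopic arcs rel endpoints in $A$ have lifts with the same endpoints, I may lift $\tilde{l}_i$ to an arc $\tilde{h}_i\subset \pi^{-1}(A)\subset P$ with endpoints exactly $z_i,w_i$. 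Because $P$ is simply connected, $h_i$ is homotopic rel endpoints to $\tilde{h}_i$ in $P$, and by the homotopy invariance of the algebraic intersection number of transverse smooth arcs (after a standard general-position perturbation), one has $\iota(T^k(h_1),h_2)=\iota(T^k(\tilde{h}_1),\tilde{h}_2)$ for every $k\in\Z$.

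Next I would check that the sum is actually finite: the arcs $\tilde{h}_1,\tilde{h}_2$ are compact subsets of $\overline{P}$ and $T^k$ shifts the real part by $k$, so $T^k(\tilde{h}_1)\cap\tilde{h}_2=\emptyset$ for all but finitely many $k$. Then I would set up the bijection between intersection points. Given $p\in\tilde{l}_1\cap\tilde{l}_2$, there is a unique $q\in\tilde{h}_2$ with $\pi(q)=p$ (since $\pi|_{\tilde{h}_2}$ is a homeomorphism onto $\tilde{l}_2$), and a unique $q'\in\tilde{h}_1$ with $\pi(q')=p$. The deck transformation taking $q'$ to $q$ is $T^k$ for a unique $k\in\Z$, so $q\in T^k(\tilde{h}_1)\cap\tilde{h}_2$. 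This is a bijection between $\tilde{l}_1\cap\tilde{l}_2$ and $\bigsqcup_{k\in\Z}(T^k(\tilde{h}_1)\cap\tilde{h}_2)$. Since $\pi$ is a holomorphic covering it is a local orientation-preserving diffeomorphism, and since $T$ is an orientation-preserving isometry, the local sign at corresponding intersection points is preserved.

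Combining the two steps,
\[
\iota([l_1],[l_2]) \;=\; \iota(\tilde{l}_1,\tilde{l}_2) \;=\; \sum_{k\in\Z}\iota(T^k(\tilde{h}_1),\tilde{h}_2) \;=\; \sum_{k\in\Z}\iota(T^k(h_1),h_2).
\]
The case of a proper subannulus $A\subsetneq N$ needs no separate argument: the inclusion $A\hookrightarrow N$ is a homotopy equivalence of annuli, the preimage $\pi^{-1}(A)$ is a connected sub-strip of $P$ in which both $\tilde{h}_i$ live, and all intersection counts are insensitive to whether they are computed in $A$ or $N$ (the ambient orientation is the same). The only step that requires care is the sign convention in the bijection between downstairs intersection points and the translate-intersection points upstairs; once one verifies that $\pi$ and $T$ both preserve orientation, everything else is routine covering-space bookkeeping, and the finiteness of the sum is immediate from compactness of $\tilde{h}_1,\tilde{h}_2$.
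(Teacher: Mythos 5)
Your proof is correct; the paper itself does not supply one, stating only that the proposition is elementary and pointing to Figure \ref{stripe1}. Your argument is the standard covering-space one and fills in exactly what the paper asks the reader to check: pass to transverse smooth representatives $\tilde l_1,\tilde l_2$, lift to $\tilde h_1,\tilde h_2$ with the prescribed endpoints, observe that $\pi^{-1}(\tilde l_1)=\bigsqcup_{k}T^{k}(\tilde h_1)$, and then project the fiber-by-fiber bijection of intersection points while noting that $\pi$ and $T$ preserve orientation so local signs match, with compactness of the lifts giving finiteness of the sum. One small remark on rigor, inherited from the paper's own definition of $\iota([l_1],[l_2])$: the homotopy-invariance step $\iota\bigl(T^{k}(h_1),h_2\bigr)=\iota\bigl(T^{k}(\tilde h_1),\tilde h_2\bigr)$ requires that during the straightening homotopy no endpoint of one arc crosses the interior of the other, which is automatic when the endpoints $T^{k}(z_1),T^{k}(w_1),z_2,w_2$ are pairwise distinct; if some endpoints coincide the algebraic intersection number already carries a $\pm1$ ambiguity at the level of the definition, and the proposition should be read with that convention, exactly as the paper later acknowledges in the remark following Proposition~\ref{minimal-4}.
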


\begin{figure}
  \centering
  \includegraphics{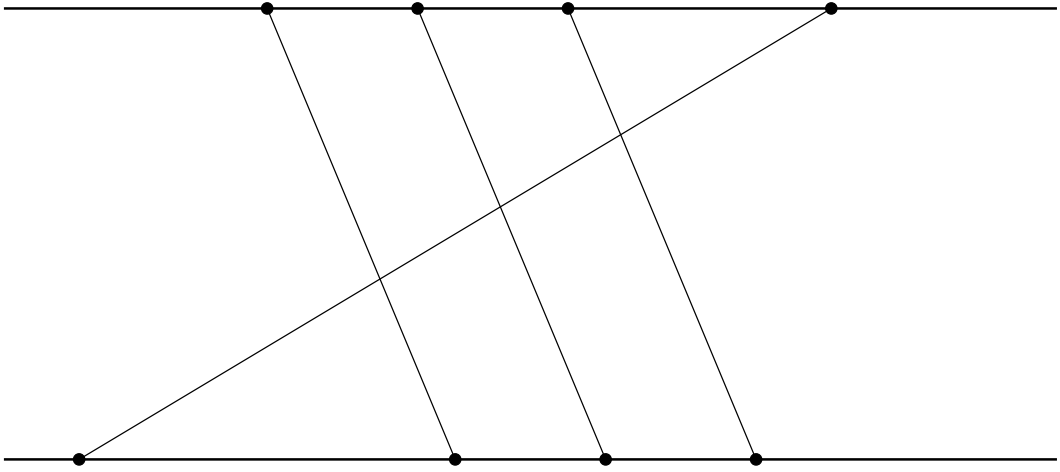}
  \caption{In this case the arcs $h_1$ and $h_2$ are straight lines in $P$.}\label{stripe1}
\end{figure}

\begin{proposition}\label{intersection-number}  Let $\wt{f}:\overline{N} \to \overline{N}$ be a homeomorphism that fixes setwise the boundary circles of $N$.  Let $l$ be any
oriented Jordan arc in $N$ connecting two boundary circles of $N$. Then
\begin{equation}\label{algebraic-intersection}
\left|\iota([l],[\wt{f}^{n}(l)]) -n\rho(\wt{f},N) \right| < 8,
\end{equation}
\noindent
for every $n \in \N$. In particular
$$
\rho (\wt{f},N)=\lim_{n\to\infty} {{\iota([l],[\wt{f}^{n}(l)]) }\over{n}}.
$$
\end{proposition}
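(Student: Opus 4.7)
The plan is to reduce everything to straight-line intersections in the strip $P$ via Proposition \ref{intersection-explained}, and then apply the uniform twist estimate Proposition \ref{uniform-twist}.

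First, fix a lift $\wh{f}:\overline{P}\to\overline{P}$ of $\wt{f}$ and a lift $\wt{l}\subset \overline{P}$ of the arc $l$. Denote its endpoints by $z_0\in\ph_0(P)$ and $z_1\in\ph_1(P)$. Then $\wh{f}^n(\wt{l})$ is a lift of $\wt{f}^n(l)$ with endpoints $\wh{f}^n(z_0)\in\ph_0(P)$ and $\wh{f}^n(z_1)\in\ph_1(P)$. Apply Proposition \ref{intersection-explained} with $A=N$, $l_1=l$ and $l_2=\wt{f}^n(l)$: we may take $h_1\subset \overline{P}$ to be the straight Euclidean segment from $z_0$ to $z_1$ and $h_2\subset \overline{P}$ to be the straight Euclidean segment from $\wh{f}^n(z_0)$ to $\wh{f}^n(z_1)$, so that
$$\iota([l],[\wt{f}^n(l)])=\sum_{k\in\Z}\iota(T^k(h_1),h_2).$$

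Next I carry out the straight-line intersection count. Write $z_j=x_j+iy_j$ and $\wh{f}^n(z_j)=u_j+iy_j$ for $j=0,1$, where $y_0=-\tfrac{\log r_0}{2\pi}$ and $y_1=\tfrac{\log r_0}{2\pi}$ are the two heights of $\ph P$. Parameterizing both segments linearly by height, $T^k(h_1)$ and $h_2$ meet if and only if there is some $y\in[y_0,y_1]$ with
$$k=(u_0-x_0)+\bigl((u_1-x_1)-(u_0-x_0)\bigr)\cdot\frac{y-y_0}{y_1-y_0}.$$
Thus $T^k(h_1)\cap h_2\neq\emptyset$ exactly for those integers $k$ lying in the closed interval with endpoints $u_0-x_0$ and $u_1-x_1$, each such intersection being transverse with constant sign determined by the orientation of the interval. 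It follows that
$$\Bigl|\sum_{k\in\Z}\iota(T^k(h_1),h_2)-\bigl((u_1-u_0)-(x_1-x_0)\bigr)\Bigr|<2,$$
since the number of integers in a closed interval of length $L$ differs from $L$ by less than $2$ (and the sign convention kills the absolute values). Substituting back,
$$\bigl|\iota([l],[\wt{f}^n(l)])-\bigl((\mathrm{Re}\,\wh{f}^n(z_1)-\mathrm{Re}\,\wh{f}^n(z_0))-(\mathrm{Re}\,z_1-\mathrm{Re}\,z_0)\bigr)\bigr|<2.$$

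Now Proposition \ref{uniform-twist} applied to $\wh{f}$ bounds the quantity in the inner parentheses within $6$ of $n\rho(\wh{f},P)=n\rho(\wt{f},N)$. Adding the two estimates (triangle inequality) yields
$$\bigl|\iota([l],[\wt{f}^n(l)])-n\rho(\wt{f},N)\bigr|<8$$
for every $n\in\N$, which is exactly (\ref{algebraic-intersection}). Dividing by $n$ and letting $n\to\infty$ gives the limit formula. The only step requiring care is the straight-line intersection count; once the segments are chosen as above it is elementary, and the main work is really the already established Proposition \ref{uniform-twist}, so I do not expect any substantive obstacle.
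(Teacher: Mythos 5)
Your argument is correct and follows the same route as the paper: reduce to the strip via a lift, replace the lifted arcs with straight segments, invoke Proposition~\ref{intersection-explained} to express the intersection number as a signed count of translates, bound the discrepancy between that count and the real-part difference by $2$, and finish with Proposition~\ref{uniform-twist}. You merely make the straight-line intersection count more explicit than the paper does, but the key inequality you derive is exactly the one the paper states.
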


\begin{remark} The same proposition holds for a homeomorphism $\wt{f}:\overline{A} \to \overline{A}$, where $A \subset S$ is a topological annulus.
\end{remark}
\begin{proof}
Let $\wh{f}$ be a lift of $\wt{f}$ to $P$. Let $\wh{l}$ be a single lift of $l$ to $P$. Let
$z_0 \in \partial_0 (P)$ and $z_1 \in \partial_1(P)$, be the endpoints of  $\wh{l}$. We compute the algebraic intersection number between the classes
$[l]$ and $[\wt{f}^{n}(l)]$ as follows. Replace the arcs $\wh{l}$ and $\wh{f}^{n}(\wh{l})$ by the straight lines that have the same endpoints as these two arcs, and denote these straight arcs by $h$ and $h_n$ respectiveley. Then $\iota([l],[\wt{f}^{n}(l)])$ is equal to the signed number of different translates of $h$ that intersect $h_n$ in $P$ (see the previous proposition and Figure \ref{stripe1}). The sign is equal to the intersection number between a single translate of $h$ (that intersects $h_n$)  and $h_n$. Then for every $n \in \N$ we have
$$
\left| \big(Re(\wh{f}^{n} (z_1))-Re(\wh{f}^{n}(z_0) ) \big)- (Re(z_1)-Re(z_0)) -  \iota([l],[\wt{f}^{n}(l)]) \right| \le 2.
$$
\noindent
Combining this inequality with Proposition \ref{uniform-twist} we obtain (\ref{algebraic-intersection}).
Divide this inequality by $n$, and  let $n \to \infty$. This proves  the rest of the proposition.
\end{proof}

\subsection{Long range Lipschitz maps on the strip} We have the following definition.
\begin{definition} Let $(X,\dis)$ be a metric space and let $x_1,x_2 \in X$. Let $F$ be a group of homeomorphisms of $X$.
We say that the group  $F$ is  $K$ long range Lipschitz  on the pair of points $x_1,x_2$, if $\dis(f(x_1),f(x_2)) \le K$
for every homeomorphism $f \in F$.
\end{definition}

\begin{remark} The constant $K$ in the above definition may depend on the choice of $x_1,x_2 \in X$.
\end{remark}

\begin{lemma}\label{long-range}  Let $\wt{f}:\overline{N} \to \overline{N}$ be a homeomorphism that setwise preserves the sets
$\ph_0(N)$ and $\ph_1(N)$, and such that $\wt{f} \circ \wt{e}=\wt{e} \circ \wt{f}$. Also, let
$C \subset N$ be a relatively closed set such that $\wt{f}(C)=\wt{e}(C)=C$, and every connected component of $C$  is compactly contained in $N$. Let $\wh{f}:\overline{P} \to \overline{P}$ be a lift of $\wt{f}$ and assume that   for every $z_1,z_2 \in (P \setminus \wh{C})$  the cyclic group generated by $\wh{f}$ is $K$ long range Lipschitz  on  the  pair $z_1,z_2$, for some constant $K=K(z_1,z_2)>0$ that depends on $z_1,z_2$ (here $\wh{C}$ is the lift of $C$ to $P$). Then $\rho(\wt{f},N)=0$.
\end{lemma}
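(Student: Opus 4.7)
My plan is to proceed by contradiction: suppose $\rho:=\rho(\wt f,N)\ne 0$, and after replacing $\wt f$ by $\wt f^{-1}$ if necessary assume $\rho>0$. The first task is to lift $\wt e$ to a Euclidean involution $\wh e:\overline P\to\overline P$. Since $\wt e$ is a conformal involution of $\overline N$ swapping the two boundary circles, any lift must be of the form $\wh e(z)=a-z$ for some $a\in\R$, so $\wh e^2=\mathrm{id}$ and $\wh e$ exchanges $\ph_0(P)$ with $\ph_1(P)$. The commutation $\wt e\,\wt f=\wt f\,\wt e$ then lifts to $\wh e\,\wh f\,\wh e=T^m\wh f$ for a unique $m\in\Z$. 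Restricting this identity to each boundary line, on which $\wh e$ acts as an orientation-reversing affine map between the two lines, and using Proposition 3.1 to read off translation numbers, I obtain the relation $\rho_0+\rho_1=-m$, where $\rho_i$ is the translation number of $\wh f|_{\ph_i(P)}$. In particular $\rho_0+m/2=-\rho/2\ne 0$; this is the quantity I will ultimately contradict.

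Iterating $\wh e\wh f\wh e=T^m\wh f$ gives the key identity $\wh f^n(\wh e(z))=a-mn-\wh f^n(z)$. Because $\wt e(C)=C$ lifts to $\wh e(\wh C)=\wh C$, the point $\wh e(z)$ lies in $P\setminus\wh C$ whenever $z$ does; applying the long range Lipschitz hypothesis to the pair $(z,\wh e(z))$ and substituting the identity yields
\[
\bigl|\,2\wh f^n(z)+mn-a\,\bigr|\;\le\;K(z,\wh e(z))\qquad\text{for every }n\in\Z,
\]
so the entire $\wh f$-orbit of $z$ is trapped in the Euclidean disk of radius $K(z,\wh e(z))/2$ centered at the real number $(a-mn)/2$ for every $n$. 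In particular the asymptotic real drift of any such orbit is $-m/2=(\rho_0+\rho_1)/2$, i.e.\ the average of the boundary translation numbers.

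Next I would propagate this bound to a boundary point. Here the compactly-contained condition on the components of $C$ is used: by $T$-equivariance and compactness of a fundamental domain it forces $d(\wh C,\ph_0(P)\cup\ph_1(P))\ge\delta>0$, so the tube $U=\{z\in P:d(z,\ph_0(P)\cup\ph_1(P))<\delta\}$ is contained in $P\setminus\wh C$ and is $\wh e$-invariant. The function $z\mapsto K(z,\wh e(z))=\sup_n|2\wh f^n(z)+mn-a|$ is lower semicontinuous on $U$ (as a supremum of continuous functions) and everywhere finite by hypothesis, so the closed sets $B_R=\{z\in U:K(z,\wh e(z))\le R\}$ cover $U$; the Baire category theorem produces $R>0$ and a non-empty open $V\subset U$ with $K\le R$ on $V$. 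I would then argue that one can arrange $\overline V\cap\ph_0(P)\ne\emptyset$, pick $z_*\in\overline V\cap\ph_0(P)$ and a sequence $z_k\in V$ with $z_k\to z_*$, and pass to the limit in the bound above, using continuity of $\wh f^n$ for each fixed $n$, to obtain $|Re(\wh f^n(z_*))+mn/2-a/2|\le R/2$ for every $n$. Combining this with Proposition 3.1 applied to $\wh f|_{\ph_0(P)}$, which gives $|Re(\wh f^n(z_*))-Re(z_*)-n\rho_0|<3$, yields $|n(\rho_0+m/2)|\le R/2+|Re(z_*)-a/2|+3$ for every $n\in\N$. Since $\rho_0+m/2=-\rho/2\ne 0$, the left-hand side grows linearly with $n$, giving the desired contradiction and forcing $\rho=0$.

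The hard part will be forcing $\overline V$ to meet $\ph_0(P)$. Because lower semicontinuity of $K$ together with $\rho\ne 0$ forces $K(z)\to+\infty$ as $z$ approaches $\ph_0(P)\cup\ph_1(P)$, the naive Baire open set $V$ is a priori bounded away from these boundary lines. Getting past this requires a more refined argument exploiting the $\wh e$- and $T$-equivariance inside the thin tube $U$, together with the specific topology of $U\cap(P\setminus\wh C)$ coming from the compactness of the components of $\wh C$ — for instance, by selecting the Baire set to be $\wh e$-symmetric (intersecting with its $\wh e$-image) and approximately $T$-invariant, and then using connectedness of $U$ along the $Re$-direction to squeeze such a $V$ all the way up to $\ph_0(P)$. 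This topological squeezing is the technical core of the lemma where the constants and the layout of $\wh C$ must be controlled simultaneously.
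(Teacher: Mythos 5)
Your algebraic observation that $\wh f^n\circ\wh e = T^{-mn}\circ\wh e\circ\wh f^n$ is a nice way to repackage the paper's use of the pair $(w_1,\wh e(w_1))$: it makes explicit that every $\wh f$-orbit of a point in $P\setminus\wh C$ has horizontal drift $-m/2=(\rho_0+\rho_1)/2$, so a contradiction with $\rho_0$ (or $\rho_1$) on the boundary is ``waiting to happen.'' But the argument has a genuine gap, and it is exactly the one you flag in your final paragraph: nothing forces the Baire open set $V$ (where $K$ is uniformly bounded) to approach $\ph_0(P)$. Lower semicontinuity plus pointwise finiteness of $K$ yields a somewhere-dense open $V$, but $V$ may sit in a compact subannulus; the $\wh e$-symmetrization and approximate $T$-invariance you invoke do not move it toward the boundary, and the ``connectedness in the $Re$-direction'' heuristic is not an argument. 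So the core transfer-to-boundary step is missing. A secondary error: the claim that ``$T$-equivariance and compactness of a fundamental domain forces $d(\wh C,\ph_0(P)\cup\ph_1(P))\ge\delta>0$'' is false. The hypothesis only says each \emph{component} of $C$ is compactly contained in $N$; $\wh C$ itself may accumulate on $\ph P$ (e.g.\ a relatively closed union of small discs approaching $\ph N$). What one can say is that no single component of $\wh C$ accumulates on $\ph P$, so $P\setminus\wh C$ is dense near $\ph P$; that is enough to choose base points near the boundary, but it does not give you a $\wh C$-free tube.

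The paper closes this gap by an entirely different device. It introduces the $\wh f$-invariant set $H(r)=\bigcup_k\wh f^k\big(P(r)\setminus\wh C\big)$ and splits into two cases. If some $H(r_1)$ accumulates on $\ph P$, one picks $w_1$ with $\wh f^{n_k}(w_1)\to\ph_1(P)$, so $\wh f^{n_k}(\wh e(w_1))\to\ph_0(P)$; the long-range Lipschitz bound on the pair $(w_1,\wh e(w_1))$ plus uniform continuity of one fixed power $\wh f^{m_0}$ and Proposition \ref{uniform-twist} give a contradiction. If instead $H(r_1)$ stays inside some $P(r')$, then points outside $H(r_1)$ have $\wh f$-orbits trapped in a thin collar of $\ph P$, and an elementary induction shows $|\wh f^n(z_i)-\wh f^n(w_i)|<2n$ for the nearest boundary point $z_i$ to $w_i$; combined with the long-range Lipschitz bound this contradicts Proposition \ref{uniform-twist} once $\rho$ is made large. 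Your proposal has neither the $H(r)$-dichotomy nor the near-boundary induction, and these are precisely what is needed to convert the nice symmetry estimate into a contradiction at $\ph P$.
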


\begin{proof}  There exists an integer $m \in \Z$, so that $\wh{f} \circ \wh{e}=T^{m} \circ \wh{e} \circ \wh{f}$. Note that $\wh{f}(\wh{C})=\wh{C}=\wh{e}(\wh{C})$. Since $\wh{f}$ commutes with the translation $T(z)$ we conclude that each $\wh{f}^{n}$, $n \in \Z$, is uniformly continuous on $\overline{P}$. That is, for a fixed $n \in \Z$ and  for every $\epsilon>0$, there exists $\delta=\delta(\epsilon,n)>0$ so that for every two points  $z_1,z_2 \in \overline{P}$ such that $|z_1-z_2| \le \delta$, we have $|\wh{f}^{n}(z_1)-\wh{f}^{n}(z_2)| \le \epsilon$.
\vskip .1cm
Let
$$
H(r)=\bigcup_{k \in \Z} \wh{f}^k(P(r) \setminus \wh{C}).
$$
\noindent
Since $\wh{f}$ commutes with $T(z)$ we have $T(H(r))=H(r)$, where $T(z)$ is the translation.

\begin{proposition} With the notation stated above we have the following. Assume that for some $1<r_1<r_0$ we have that
$H(r_1)$ has an accumulation point on $\ph{P}$. Then  $\rho(\wt{f},N)=0$.
\end{proposition}

\begin{proof}
Since $\wh{f}$ commutes with $T(z)$ there exists $w_1 \in (P(r_1) \setminus \wh{C})$ such that  $0 \le Re(w_1)<1$, and such that
$\wh{f}^{n_k}(w_1)$ converges  to $\partial P$, where $n_k$ is a sequence of integers.
Without loss of generality we may assume that
$$
\lim_{n_{k} \to \infty} Im(\wh{f}^{n_k}(w_1))={{\log r_0}\over{2\pi}},
$$
\noindent
that is the sequence $\wh{f}^{n_k}(w_1)$ converges  to $\ph_1(P)$.
Let $w_0=\wh{e}(w_1)$ (note that $w_0$ does not belong to $\wh{C}$).
Then $\wh{f}^{n_k}(w_0)$ converges  to $\ph_0(P)$ because $\wh{e}(\ph_1(P))=\ph_0(P)$.
Since  $w_1,w_0 \in (P \setminus \wh{C})$, we have  that the group generated by $\wh{f}$ is  $K$  long range Lipschitz
on the pair  $w_0,w_1$, for some constant $K>0$. This implies that for every $k \in \Z$ the Euclidean distance between the points $\wh{f}^{k}(w_1)$ and $\wh{f}^{k}(w_0)$
is bounded above by the constant $K$, that is
\begin{equation}\label{M-distance}
|\wh{f}^{k}(w_1)-\wh{f}^{k}(w_0)| \le K, \, k \in \Z.
\end{equation}
\vskip .1cm
Assume now that  $\rho(\wt{f},N) \ne 0$. Let $m_0 \in \Z$ be such that
\begin{equation}\label{m-0}
m_0\rho(\wt{f},N)>2K+11.
\end{equation}
\noindent
Since $\wh{f}^{m_{0}}$ is uniformly continuous on $\overline{P}$, there exists  $\delta'>0$
such that for every two points  $z,z' \in \overline{P}$, with $|z-z'| \le \delta'$, we have
\begin{equation}\label{m-0-1}
|\wh{f}^{m_{0}}(z)-\wh{f}^{m_{0}}(z')| \le {{1}\over{2}}.
\end{equation}
\noindent
Let  $\delta=\min\{{{1}\over{3}},\delta' \}$.
\vskip .1cm
Let $n_k$ be large enough so that the point $\wh{f}^{n_k}(w_1)$ is within the Euclidean distance
$\delta$ from $\ph_1(P)$. Then
$\wh{f}^{n_k}(w_0)$ is within the Euclidean distance $\delta$ from $\ph_0(P)$ (since $\wh{e}$ is an isometry).
Let $z_1 \in \ph_1(P)$ be a point such that $|\wh{f}^{n_k}(w_1)-z_1| \le \delta$, and  $z_0 \in \ph_0(P)$ be a
point such that $|\wh{f}^{n_k}(w_0)-z_0| \le \delta$.
We have
$$
|Re(z_1)-Re(z_0)| \le   |Re(\wh{f}^{n_{k}}(w_1) )-Re(\wh{f}^{n_{k}}(w_0))|+|\wh{f}^{n_{k}}(w_1)-z_1|+
|\wh{f}^{n_{k}}(w_0)-z_0| \le
$$
$$
\le K+2\delta<K+2.
$$
\noindent
Then it follows from Proposition 3.2 that
$$
\left| Re( \wh{f}^{m_{0}}(z_1))- Re(\wh{f}^{m_{0}}(z_0) ) -m_0\rho(\wh{f},P) \right| \le (K+2)+6<K+9,
$$
\noindent
which together with (\ref{m-0}) implies that
$$
|Re( \wh{f}^{m_{0}}(z_1))-Re (\wh{f}^{m_{0}}(z_0) ) | \ge m_0\rho(\wh{f},P)-K-9>K+2.
$$
\noindent
Again from the triangle inequality and from (\ref{m-0-1}) we get
$$
|Re(\wh{f}^{(n_{k}+m_{0})} (w_1))-Re(\wh{f}^{n_{k}+m_{0}}(w_0))| \ge  |Re( \wh{f}^{m_{0}}(z_1))-Re (\wh{f}^{m_{0}}(z_0) ) |-
$$
$$
-|Re(\wh{f}^{(n_{k}+m_{0})} (w_1))-\wh{f}^{m_{0}}(z_1)|- |Re(\wh{f}^{(n_{k}+m_{0})} (w_0))-\wh{f}^{m_{0}}(z_0)| \ge K+2-2{{1}\over{2}}=K+1.
$$
\noindent
But this contradicts (\ref{M-distance}).
\end{proof}

It remains to consider the case when every $H_r$ is a subset of $P(r')$ for some $r'<r_0$ (here $r'$ depends on $H_r$). The proof is by contradiction. From now on we assume that $\rho(\wh{f},P) \ne 0$.  Let $m \in\Z$ so that $\rho(\wh{f}^{m},P)>10$. Then for any pair of points $z_1,z_2 \in (P \setminus \wh{C})$ the group  generated by $\wh{f}^{m}$ is also  $K$ long range Lipschitz  on the pair $z_1,z_2$. So we may assume that $\rho (\wh{f},P)>10$.
\vskip .1cm
Since $\wh{f}$ is uniformly continuous on $\overline{P}$, there exists  $\delta'>0$   such that for every two points  $z,z' \in \overline{P}(r_0)$, with $|z-z'| \le \delta'$, we have
$|\wh{f}(z)-\wh{f}(z')| \le {{1}\over{2}}$.  Let  $\delta=\min\{{{1}\over{3}},\delta' \}$. Let $r_1<r_0$ be close enough to $r_0$ so that for every $z \in P \setminus P(r_{1})$, we have that the distance between $z$ and $\ph{P}$ is less than $\delta$. Then the same is true for every point in $ P \setminus H_{r_{1}}$ since $P(r_1) \subset H_{r_{1}}$.
\vskip .1cm
It follows from our assumption on the set $C$ that no connected component of $\wh{C}$ in $P$ accumulates on the boundary of $P$.
Therefore we can find $w_1 \in P \setminus (H_{r_{1}} \cup \wh{C})$  such that $Re(w_1)=0$. Let $\wh{e}(w_1)=w_0$. Then $w_0 \in P \setminus H_{r_{1}}$  and $Re(w_0)=0$. Let $z_1={{\log r_{0}}\over{2\pi}}$ and
$z_0=-{{\log r_{0}}\over{2\pi}}$. Then the Euclidean distance between
$z_i$ and $w_i$ is less than $\delta$, for $i=0,1$.
\vskip.1cm
We now show that $|\wh{f}^{n}(z_i)-\wh{f}^{n}(w_i)|<2n$.
The statement is true for $n=1$ by the choice of $\delta$.
We assume that it is true for $n$ and prove it for $n+1$.
Let $\zeta^{n}_i \in \ph_i(P)$ be the point with the same $x$-coordinate as  $\wh{f}^{n}(w_i)$. Then $|\zeta^{n}_i-\wh{f}^{n}(w_i)|<\delta$ since $\wh{f}^{n}(w_i) \in P \setminus H_{r_{1}}$ .  This implies that
\begin{equation}\label{induction}
|\wh{f}(\zeta^{n}_i)-\wh{f}^{n+1}(w_i)|<{{1}\over{2}}.
\end{equation}
Since $|\wh{f}^{n}(z_i)-\wh{f}^{n}(w_i)|<2n$ we conclude that
$$
|\zeta^{n}_i-\wh{f}^{n}(z_i)|= |Re\left(\wh{f}^{n}(w_i)-\wh{f}^{n}(z_i)\right)|< 2n.
$$
\noindent
This implies that $|\wh{f}(\zeta^{n}_i)-\wh{f}^{n+1}(z_i)|<2n+1$. The triangle inequality and (\ref{induction}) yield

$$
|\wh{f}^{n+1}(z_i)-\wh{f}^{n+1}(w_i)|<  |\wh{f}^{n+1}(z_i)-\wh{f}(\zeta^{n}_i)|+  |\wh{f}(\zeta^{n}_i)-\wh{f}^{n+1}(w_i)| \le 2n+{{3}\over{2}}<2(n+1).
$$
This proves the induction statement.
\vskip .1cm
We have that  $|\wh{f}^{n}(w_1)-\wh{f}^{n}(w_0)|\le K$, for every $n \in \Z$, where $K$ is such that the group generated by $\wh{f}$ is $K$ long range  Lipschitz   on $w_0$ and $w_1$. By the above induction statement and from the triangle inequality we have
$|\wh{f}^{n}(z_1)-\wh{f}^{n}(z_0)|\le K+4n$. Since  $\rho(\wh{f},P)>10$ we obtain a contradiction from Proposition \ref{uniform-twist}.

\end{proof}

\subsection{Fixed points of a  strip homeomorphism}
Let $\Omega \subset P$ be a simply connected domain that is invariant under the translation $T(z)=z+1$ (this means that $\Omega$ is the lift of a topological annulus $A \subset N$ with respect to the covering map $P \to N$). Then there exists $1<r_1 \le r_0$ and a conformal map $\Phi:P(r_1) \to P$, such that $\Phi$ commutes with the translation $T(z)=z+1$ and $\Phi(P(r_1))=\Omega$.

\begin{definition}\label{def-faithful}  Let $A \subset N$ be a topological annulus that is homotopic to $N$.
We say that $A$ is a faithful domain if
\begin{itemize}
\item We have $\inte(\overline{A})=A$, that is the interior of the closure of $A$ coincides with $A$.
\item Let $E$ be a connected component of the set $N \setminus \overline{A}$. Then the frontier $\ph{E}$ of $E$ is not contained in $\overline{A}$.
\end{itemize}
\end{definition}

If $\Omega \subset P$ is a  a simply connected domain that is invariant under the translation $T(z)$ we say that $\Omega$ is a faithful domain if
the corresponding annulus $A \subset N$ is a faithful domain. Equivalently this means that the interior of the closure of $\Omega$ coincides with $\Omega$,
and that if $E$ is a connected component of the set $P \setminus \overline{\Omega}$ then the relative frontier $\ph{E}$ of $E$ is not contained in $\overline{\Omega}$ (by the term relative frontier of $E$ we mean the frontier points of $E$ except $\infty$).
In Figure \ref{stripe2} we have an example of a domain $\Omega \subset P$ that is a simply connected domain, invariant under the translation $T(z)$, but that is not faithful.

\begin{remark} Let $A \subset N$ be a topological annulus homotopic to $N$. Let $\Omega$ be the lift of $A$ to $P$. Then $A$ is faithful if and only if $\Omega$ is faithful.
\end{remark}

\begin{proposition}\label{faithful-distance} Suppose that  $\Omega \subset P$ is a faithful and  simply connected set that is invariant under the translation $T(z)$. Let  $\Phi:P(r_1) \to P$, be a conformal map such that $\Phi$ commutes with $T(z)$ and $\Phi(P(r_1))=\Omega$. Then there exists a constant $C>0$ such that $|\Phi(z)-z| \le C$, for every $z \in P(r_1)$.
\end{proposition}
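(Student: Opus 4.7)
The plan is to study the holomorphic displacement $\Psi(z) = \Phi(z) - z$. Since $\Phi$ commutes with $T(z) = z+1$, the function $\Psi$ is $T$-invariant and descends to a holomorphic function $\psi$ on the quotient annulus $N(r_1)$. Setting $\zeta = e^{-2\pi i z}$, the induced conformal map $\phi : N(r_1) \to A$, where $A = \Omega/\langle T\rangle$, satisfies $\phi(\zeta) = \zeta\,e^{-2\pi i\psi(\zeta)}$, so the desired conclusion is equivalent to $\phi(\zeta)/\zeta$ lying in a compact subset of $\mathbb{C}^{*}$. Because $\overline{N(r_1)}$ is compact in $\mathbb{C}$ and $0 \notin \overline{A}$, the whole proposition reduces to showing that $\phi$ extends continuously to $\overline{N(r_1)} \to \overline{A}$.

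The modulus estimate is immediate: $\phi(\zeta) \in A \subset N(r_0)$ and $\zeta \in N(r_1)$ give $|\phi(\zeta)/\zeta| \in [(r_0 r_1)^{-1}, r_0 r_1]$, which is precisely the boundedness of $\mathrm{Im}\,\psi$. The argument estimate is the live issue. First, $\log(\phi(\zeta)/\zeta)$ is single-valued on $N(r_1)$: as an orientation-preserving conformal homeomorphism between two annuli each homotopic to the core of $N(r_0)$, the map $\phi$ sends a core loop of $N(r_1)$ to a loop winding once around $0$, matching the winding of the identity. Hence $\arg(\phi/\zeta)$ is a well-defined harmonic function on $N(r_1)$, and bounding it is the only remaining task.

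The crux, and the main obstacle, is the continuous extension of $\phi$ to $\overline{N(r_1)}$. I intend to deduce it from the faithfulness hypotheses, which are tailored to exclude the two obstructions to continuous boundary extension for a conformal map onto a multiply connected planar domain: (i) slits of the complement pointing into the domain, which produce multiple prime ends at a single point of $\overline{A}$, and (ii) pinches at which two distinct pieces of $\partial A$ touch and enclose a complementary component from the inside. Faithfulness condition (1), $\mathrm{int}(\overline{A}) = A$, eliminates (i); faithfulness condition (2), forcing every component of $N \setminus \overline{A}$ to have frontier reaching $\partial N$, eliminates (ii). With these pathologies ruled out, $\partial A$ is locally connected in the appropriate sense, and Carath\'eodory's continuous-extension theorem — applied to the annulus via its universal cover $P(r_1)$, or via a Schwarz-reflection doubling reducing to the simply connected case — yields a continuous extension of $\phi$ to the closures. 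Then $\phi/\zeta$ is continuous and non-vanishing on the compact set $\overline{N(r_1)}$, so both $\log|\phi/\zeta|$ and $\arg(\phi/\zeta)$ are bounded, giving the stated bound $|\Phi(z) - z| \le C$. The delicate step is precisely the deduction of local connectivity (equivalently continuous boundary extension) from the two faithfulness conditions; everything else is routine book-keeping in conformal geometry.
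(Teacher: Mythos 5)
Your approach founders on the step you flagged as delicate: the two faithfulness conditions do \emph{not} imply that $\ph A$ is locally connected, so Carath\'eodory's theorem does not apply and $\phi$ need not extend continuously to $\overline{N(r_1)}$. Here is a $T$-invariant faithful counterexample (take $P=\{\,|Im(z)|<1\,\}$). Let $L=\{0\}\times[\tfrac12,1)$, and for $n\ge 2$ let $T_n=[\tfrac1n-\epsilon_n,\,\tfrac1n+\epsilon_n]\times[\tfrac12,1)$ be pairwise disjoint solid rectangular teeth hanging from $\ph_1(P)$, with $\epsilon_n\to 0$; put $B_0=L\cup\bigcup_{n\ge2}T_n$, which is closed in $P$ because the teeth accumulate exactly on $L$, and set $\Omega=P\setminus\bigcup_{k\in\Z}T^k(B_0)$. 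Then $\Omega$ is open, simply connected and $T$-invariant, and it is faithful: $\inte(\overline\Omega)=\Omega$ because every neighbourhood of a point of the slit $L$ meets the interior of some tooth, and each component $\inte(T_n)$ of $P\setminus\overline\Omega$ has frontier meeting $\ph_1(P)$ along the open top edge of $T_n$, which lies outside $\overline\Omega$. Yet $\ph\Omega$ is not locally connected at any interior point $p$ of $L$: arbitrarily close to $p$ there are disjoint vertical segments on the sides of teeth $T_n$ for large $n$, and any connected subset of $\ph\Omega$ joining such a segment to $L$ must pass through $\ph_1(P)$, hence must leave a ball of fixed radius around $p$. Consequently $\phi$ has infinitely many prime ends lying over each point of $L$ and admits no continuous extension to $\overline{N(r_1)}$.

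The proposition is therefore a genuinely weaker statement than continuous boundary extension, and the paper proves it by a different route that handles this $\Omega$. One shows only that $\Phi(\FD)$, the image of one fundamental rectangle, has finite Euclidean diameter --- that $\Phi$ does not ``spiral'' --- which already yields $|\Phi(z)-z|\le C$ since $\Phi$ commutes with $T$. The argument is by contradiction via hyperbolic geodesics: if $\Phi(\FD)$ were unbounded one extracts two distinct geodesic rays from a basepoint whose $\Phi$-images both terminate at $+\infty$; the region $E_1$ they bound has its relative boundary contained in $\Omega$, so faithfulness condition (2) forces $E_1\subset\overline\Omega$ and condition (1) then forces $E_1\subset\Omega$, contradicting the existence of an accessible boundary point of $\Omega$ inside $E_1$. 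Your other steps (passing to the quotient annulus, the modulus bound, the winding-number argument for single-valuedness of $\arg(\phi/\zeta)$) are sound; it is only the Carath\'eodory reduction that cannot be extracted from faithfulness.
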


\begin{remark} This proposition does not hold  if $\Omega$ is not faithful (see Figure \ref{stripe2}).
\end{remark}

\begin{proof}  Let $\FD \subset P(r_1)$ be a rectangle fundamental
domain for the action of $T(z)$ on $P(r_1)$. If we prove that $\Phi(\FD)$  has a finite Euclidean diameter in $P$ then the proposition follows since $\Phi$ commutes with the translation $T(z)$.
\vskip .1cm
The proof is by contradiction. Assume that $\Phi(\FD)$  has an
infinite diameter. Then there exists a sequence $z_n \in \FD$ such
that the Euclidean distance between $\Phi(z_1)$ and $\Phi(z_n)$ goes to $\infty$ as $n\to\infty$. Set $w_n=\Phi(z_n)$. This implies that
$Re(w_n) \to \infty$. Without loss of generality we may assume that  $Re(w_{n}) \to +\infty$.
\vskip .1 cm
Let $l_{\infty}$ be the hyperbolic geodesic ray in $P(r_1)$ that  starts at $z_1$ and ends at $+\infty$. Let
$l_n$ be the hyperbolic geodesic ray that starts at $z_1$, and which contains $z_n$. After passing onto a subsequence if necessary, we have that $z_n$ converges to some point in $\overline{\FD}$.  Let  $l_{*}$ be the limit of the geodesic rays $l_n$. The geodesic ray $l_{*}$ starts at $z_1$
and  by $z_{*} \in \ph{P(r_1) }$ we denote the endpoint of $l_{*}$ on
$\ph{P(r_1)}$ (without loss of generality we may assume that $z_{*} \in \ph_1(P(r_1))$).
Note that $l_{*} \ne l_{\infty}$. Then $l_{\infty} \cup l_{*}$ divides $P(r_1)$ into two
simply connected sets $D_1$ and $D_2$.
We have that $z_{*}$ divides $\ph_1(P(r_1))$ into two Euclidean rays $\gamma_1$ and $\gamma_2$. The set  $D_1$ contains  $\gamma_1$ in its boundary and $D_2$ contains $\gamma_2$ in its boundary. Moreover
the boundary of $D_1$ is $l_{\infty} \cup l_{*} \cup \gamma_1 \cup \{+\infty\}$.
\vskip .1 cm
Let $l'_{\infty}=\Phi(l_{\infty})$ and
$l'_{*}=\Phi(l_{*})$. Since $\Phi$ commutes with the translation we see that $\Phi(+\infty)=+\infty$. We conclude that  $l'_{\infty}$
has $+\infty$ as its endpoint. It follows from  the assumption $Re(w_{n}) \to +\infty$ that $l'_{*}$ has $+\infty$ as its endpoint as well. However the curves $l'_{\infty}$ and $l'_{*}$ do not coincide because the curves $l_{\infty}$ and $l_{*}$ do not coincide either. Then the set $P \setminus (l'_{\infty} \cup l'_{*})$ has two simply connected components
$E_1$ and $E_2$, and
$$
\ph{E}_1=l'_{\infty} \cup l'_{*}.
$$
\noindent
Moreover $\Phi(D_1) \subset E_1$ and $\Phi(D_2) \subset E_2$.
\vskip .1cm
Let $\zeta_1 \in \gamma_1$, be an accessible point for $\Phi$, and let $\alpha_1$ be the hyperbolic geodesic ray from $z_1$ to $\zeta_1$.
Then   $\Phi(\alpha_1)$ is a finite diameter arc in $\Omega$.
Moreover  $\Phi(\zeta_1) \in E_1$ because $\Phi(D_1) \subset E_1$ and  $\Phi(\zeta_1)$ does not belong to $l'_{\infty} \cup l'_{*}$.
This shows that $E_1$ is not a subset of $\Omega$. If the set $E_1 \setminus \overline{\Omega}$ is non-empty then every connected component $O$ of this set is also a connected component of the set $P \setminus \overline{\Omega}$. Moreover $\ph{O}$ is contained in $\overline{\Omega}$ which is impossible since $\Omega$ is faithful. Therefore we have that the set $E_1 \setminus \overline{\Omega}$ is empty, that is $E_1 \subset \overline{\Omega}$. But since $\Omega$ is faithful and since $E_1$ is an open set we conclude that $E_1 \subset \Omega$. This is a contradiction since  $\Phi(\zeta_1) \in E_1$ and since $\Phi(\zeta_1)$ does not belong to $\Omega$.  So the set $\Phi(\FD)$ has a finite Euclidean diameter.

\end{proof}

\begin{figure}
\centering
\includegraphics{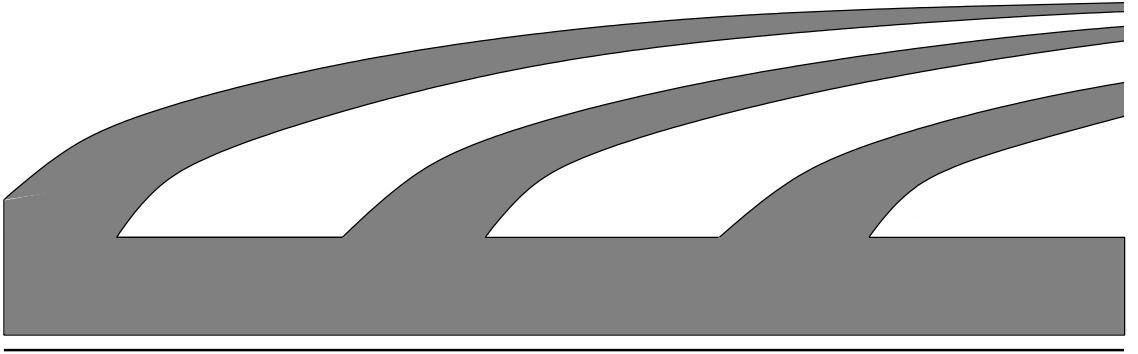}
\caption{This is an example of a simply connected domain that is
invariant under the translation $T(z)$ but that is not faithful.}
\label{stripe2}
\end{figure}

Fix a homeomorphism $\wh{f}:\overline{P} \to \overline{P}$ that commutes with the translation $T(z)$.
Let $1<r_1<r_0$ and let $\Phi:P(r_1) \to P=P(r_0)$ be a conformal map (it is conformal onto its image that is we do not assume that
$\Phi(P(r_1))=P$) that commutes with the translation $T(z)$,  that is $\Phi(z+1)=\Phi(z)+1$. Set $\Omega=\Phi(P(r_1))$ and assume that $\wh{f}$ setwise preserves $\Omega$.
Since $\wh{f}$ is a homeomorphism of $\overline{\Omega}$, we conclude  that the map $\wh{g}=\Phi^{-1} \circ \wh{f} \circ \Phi$ is a homeomorphism of $\overline{P(r_1)}$.  We define $\rho(\wh{f},\Omega)=\rho(\wh{g},P(r_1))$.
\vskip .1cm
First  we prove that under certain conditions on $\Omega$ and $\wh{f}$, the map $\wh{g}$ has a fixed point on $\ph{P(r_1)}$.

\begin{proposition}\label{fixed-point} Assuming the above notation we have the following. Suppose that  $\Omega$ is faithful.
If there exists a compact set $K \subset \ph{\Omega}$ which is setwised  fixed by $\wh{f}$, then the homeomorphism $\wh{g}$ has a fixed point on  $\ph{P(r_1)}$.
\end{proposition}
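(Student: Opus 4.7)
The idea is to use Proposition~\ref{faithful-distance} to pull back the compact $\wh f$-invariant set $K\subset\ph\Omega$ to a nonempty, bounded, $\wh g$-invariant subset of $\ph P(r_1)$, and then invoke the translation-number theory from the paragraph preceding Proposition~\ref{uniform-translation} to obtain a fixed point.

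As already observed, $\wh g=\Phi^{-1}\circ\wh f\circ\Phi$ is a homeomorphism of $\overline{P(r_1)}$ commuting with $T(z)=z+1$ (since $\Phi$ and $\wh f$ both do). This forces $\wh g$ to preserve each of $\ph_0 P(r_1)$ and $\ph_1 P(r_1)$ setwise: if it swapped them, it would descend to an orientation-preserving homeomorphism of the annulus $N(r_1)$ that swaps the two boundary circles, and any lift of such a homeomorphism satisfies $\wh g\circ T=T^{-1}\circ\wh g$, not $T\circ\wh g$.

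Next, fix $p\in K$ and a sequence $w_n\in\Omega$ with $w_n\to p$, and set $z_n=\Phi^{-1}(w_n)$. Proposition~\ref{faithful-distance} gives a constant $C>0$ with $|\Phi(z)-z|\le C$ on $P(r_1)$, so $|z_n-w_n|\le C$ and $\{z_n\}$ is bounded in $\C$. A subsequence converges to some $z^*\in\overline{P(r_1)}$; if $z^*\in P(r_1)$ then $w_n\to\Phi(z^*)\in\Omega$ would contradict $p\in\ph\Omega$, so $z^*\in\ph P(r_1)$. Let $\wt K$ be the set of all such limits $z^*$ obtained as $p$ ranges over $K$. Then $\wt K$ is nonempty, closed (by a routine diagonal argument using compactness of $K$), and bounded (since $|z^*-p|\le C$ and $K$ is compact). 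Given $z^*\in\wt K$ with $z_n\to z^*$ and $\Phi(z_n)=w_n\to p\in K$, continuity of $\wh g$ on $\overline{P(r_1)}$ gives $\wh g(z_n)\to \wh g(z^*)$, while $\wh g(z_n)=\Phi^{-1}(\wh f(w_n))$ with $\wh f(w_n)\to\wh f(p)\in K$, so $\wh g(z^*)\in\wt K$.

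Since $\wt K$ is nonempty, at least one of $\wt K\cap\ph_0 P(r_1)$ or $\wt K\cap\ph_1 P(r_1)$ is nonempty; without loss of generality assume the latter. Because $\wh g$ preserves $\ph_1 P(r_1)$ setwise, it maps $\wt K\cap\ph_1 P(r_1)$ into itself. Thus $\wh g$ restricted to $\ph_1 P(r_1)\cong\R$ is a homeomorphism commuting with $T$ that preserves the nonempty bounded set $\wt K\cap\ph_1 P(r_1)$. Were its translation number $\rho$ nonzero, Proposition~\ref{uniform-translation} would give $|\wh g^n(x)-x-n\rho|<3$ for every $x\in\ph_1 P(r_1)$ and every $n\in\N$, forcing orbits to escape to infinity and contradicting the invariance of a bounded set. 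Hence $\rho=0$, so $\wh g|_{\ph_1 P(r_1)}$ has a fixed point, which is the desired fixed point of $\wh g$ on $\ph P(r_1)$. The main technical point to check is that the limits $z^*$ genuinely stay on the finite boundary lines and form a bounded set there; this is exactly what the bounded-perturbation estimate $|\Phi(z)-z|\le C$ of Proposition~\ref{faithful-distance} delivers.
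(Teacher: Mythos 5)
Your proof is correct and follows essentially the same route as the paper's: both use Proposition~\ref{faithful-distance} to bound the Carath\'eodory boundary correspondence and thereby produce a nonempty, bounded, $\wh g$-invariant subset of $\ph P(r_1)$; the paper packages this via the fundamental-domain covering $\bigcup_m T^m(\overline{\Phi(\FD)})$ and finishes by observing that the infimum and supremum of the resulting compact invariant set are themselves fixed by $\wh g$, whereas you make the correspondence set $\wt K$ explicit and finish via the translation-number dichotomy. The two concluding steps are interchangeable, so this is the same argument rather than a genuinely different one.
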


\begin{remark} It seems that this statement holds even if $\Omega$ is not faithful, but in that case the proof would be more involved.
\end{remark}

\begin{proof}  Let $\FD \subset P(r_1)$ be a rectangle fundamental
domain for the action of $T(z)$ on $P(r_1)$. In the previous proposition we have proved that $\Phi(\FD)$  has a finite Euclidean diameter in $P$.
We claim that the set
$$
\bigcup_{k\in\mathbb{Z}}T^k(\overline{\Phi(\FD)}),
$$
\noindent
covers $\overline{\Omega}$. Let $w_0 \in \overline{\Omega}$ and let $w_n \in \Omega$ be a sequence that converges to $w_0$. Then $w_n$ is a bounded sequence and since $\Phi(\FD)$ has a finite diameter we conclude that finitely many translates of $\Phi(\FD)$ contain all points $w_n$.
This implies that $w_0$ belongs to the closure of some translate of $\overline{\Phi(\FD)}$.
\vskip .1 cm
Since $K$ is a compact set, only finitely many translates of
$\overline{\Phi(\FD)}$ intersect  $K$.
Let $M_K=\{m \in \Z: \, T^m \big( \overline{\Phi(\FD)} \big) \cap K \ne \emptyset \}$. Then the homeomorphism $\wh{f}$ setwise fixes the set $\bigcup_{m \in M_K} T^m \big( \overline{\Phi(\FD)} \big)$. This implies that the homeomorphism $\wh{g}$ setwise fixes
$$
\ph_1(P(r_1)) \bigcap \left( \bigcup_{m\in M_K} T^m\big( \overline{\FD} \big)  \right).
$$
\noindent
Let $z_1,z_2 \in \ph_1(P(r_1))$ so that $Re(z_1)$ and $Re(z_2)$ are respectively  the infimum and the supremum of the set

$$
\ph_1(P(r_1)) \bigcap \left( \bigcup_{m\in M_K} T^m(\overline{\FD}) \right).
$$
\noindent
Then $-\infty<Re(z_1) \le Re(z_2)<+\infty$. We have that the homeomorphism $\wh{g}$ fixes $z_1$ and $z_2$. Therefore $\wh{g}$ has a fixed point.
\end{proof}

The following proposition is the version of Proposition \ref{uniform-twist} for faithful domains.

\begin{proposition}\label{uniform-twist-1} Let $1<r_1 \le r_0$ and let  $\Phi:P(r_1) \to P$ be a conformal map that commutes with the translation $T(z)$.
Set $\Phi(P(r_1))=\Omega$ and suppose that $\Omega$ is faithful domain. Let $\wh{f}:\overline{P} \to \overline{P}$ be a homemorphism that setwise fixes the boundary lines of $P$, that commutes with the translation $T(z)$, and such that $\wh{f}(\Omega)=\Omega$. Then there exists a constant $K>0$, such that for every $w_0 \in \ph_0(\Omega)$,  $w_1 \in \ph_1(\Omega)$, we have
$$
\left| \big(Re( \wh{f}^n(w_1))- Re(\wh{f}^n(w_0) )\big) - (Re(w_1)- Re(w_0))  -n\rho(\wh{f},\Omega) \right| < K.
$$
\noindent
for every $n \in \N$.
\end{proposition}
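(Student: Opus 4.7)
The plan is to conjugate by $\Phi$ so that the statement about $\wh{f}$ acting on the faithful domain $\Omega \subset P$ reduces to Proposition \ref{uniform-twist} applied to the pulled-back homeomorphism on the model strip $P(r_1)$, and then use Proposition \ref{faithful-distance} to transfer the estimate back through $\Phi$ with only a bounded loss.

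Concretely, I would set $\wh{g} = \Phi^{-1} \circ \wh{f} \circ \Phi$. Since $\Phi$ is a conformal homeomorphism from $P(r_1)$ onto $\Omega$ that commutes with $T(z)=z+1$, and $\wh{f}$ commutes with $T$ and preserves $\Omega$ setwise, the map $\wh{g}$ is a homeomorphism of $\overline{P(r_1)}$ that commutes with $T$ and setwise fixes each boundary line $\ph_i(P(r_1))$. This is exactly the situation covered by Proposition \ref{uniform-twist}, which yields, for any $z_0 \in \ph_0(P(r_1))$ and $z_1 \in \ph_1(P(r_1))$,
\[
\left| \bigl(Re(\wh{g}^n(z_1)) - Re(\wh{g}^n(z_0))\bigr) - (Re(z_1) - Re(z_0)) - n\rho(\wh{g},P(r_1)) \right| < 6.
\]
By the very definition of the twist number on $\Omega$, we have $\rho(\wh{g},P(r_1)) = \rho(\wh{f},\Omega)$.

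Next, I would invoke Proposition \ref{faithful-distance}, which uses the faithfulness of $\Omega$ to give a constant $C>0$ with $|\Phi(z)-z|\le C$ for every $z\in P(r_1)$. This bound extends to the boundary lines of $P(r_1)$ through approach by interior sequences (since the bound is uniform and $\wh{f}^n$ is continuous, this passes to prime-end representatives of $w_0$ and $w_1$). In particular, given $w_i \in \ph_i(\Omega)$, we can choose $z_i \in \ph_i(P(r_1))$ with $\Phi(z_i) = w_i$ and $|w_i - z_i| \le C$. The intertwining identity $\wh{f}^n \circ \Phi = \Phi \circ \wh{g}^n$ combined with the same bound gives $|\wh{f}^n(w_i) - \wh{g}^n(z_i)| = |\Phi(\wh{g}^n(z_i)) - \wh{g}^n(z_i)| \le C$. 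Taking real parts, each of the four quantities $Re(w_i)$, $Re(z_i)$, $Re(\wh{f}^n(w_i))$, $Re(\wh{g}^n(z_i))$ differs from its companion by at most $C$. Substituting into the inequality above and collecting error terms yields
\[
\left| \bigl(Re(\wh{f}^n(w_1)) - Re(\wh{f}^n(w_0))\bigr) - (Re(w_1) - Re(w_0)) - n\rho(\wh{f},\Omega) \right| < 6 + 4C,
\]
so $K = 6 + 4C$ works.

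The main obstacle is the very last bookkeeping issue: to make sense of the boundary points $z_i = \Phi^{-1}(w_i)$ one needs $\Phi$ to extend reasonably to $\overline{P(r_1)}$. This is precisely where faithfulness is essential, since without it Proposition \ref{faithful-distance} fails and $\Phi$ could send a bounded fundamental domain of $T$ in $P(r_1)$ to a set of infinite Euclidean diameter, making the passage to frontier points impossible. With faithfulness in hand, the bounded distortion $|\Phi(z)-z|\le C$ guarantees that prime-end representatives on $\ph_i(P(r_1))$ correspond to the frontier points $w_i$ up to an additive error $C$, and the rest of the argument is a routine triangle inequality.
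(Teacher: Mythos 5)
Your proposal is correct and follows essentially the same route as the paper: conjugate by $\Phi$ to reduce to Proposition \ref{uniform-twist} on the model strip $P(r_1)$, invoke Proposition \ref{faithful-distance} to get the uniform bound $|\Phi(z)-z|\le C$, and then apply the triangle inequality to obtain $K=4C+6$. The only cosmetic difference is that the paper handles the boundary extension by first taking $w_0,w_1$ accessible and then citing density of accessible points in $\ph\Omega$, whereas you phrase it in terms of prime-end representatives and interior approach sequences, but these are the same idea.
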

\begin{proof} Recall that $\wh{g}=\Phi^{-1} \circ \wh{f} \circ \Phi$. Let $C$ be the constant from Proposition \ref{faithful-distance}. We show that the proposition holds for $K=4C+6$.
\vskip .1cm
Assume first that $w_0$ and $w_1$ are accessible points. Let $\Phi^{-1}(w_i)=z_i \in \ph_i(P(r_1))$. Then by Proposition \ref{uniform-twist}  we have
$$
\left| \big(Re( \wh{g}^n(z_1))- Re(\wh{g}^n(z_0) )\big) - (Re(z_1)- Re(z_0))  -n\rho(\wh{g},P(r_1)) \right| < 6.
$$
\noindent
Since
$$
|w_i-z_i|, \, \, |\wh{f}^n(w_i))-\wh{g}^n(z_i))|  \le C, \quad i=0,1
$$
\noindent
we conclude that
$$
\left| \big(Re( \wh{f}^n(w_1))- Re(\wh{f}^n(w_0) )\big) - (Re(w_1)- Re(w_0))  -n\rho(\wh{f},\Omega) \right| < 4C+6=K.
$$
\noindent
The proposition now follows from the fact that the accessible points are dense in $\ph\Omega$.

\end{proof}

We have

\begin{lemma}\label{sum-twist-number} Let $\wt{f}:\overline{N} \to \overline{N}$ be a homeomorphism that setwise preserves the boundary circles of $N$. Let $A_i\subset N$, $i=0,1$, be two mutually disjoint toplogical annuli that are homotopic to $N$ such that $\ph_0(A_0)=\ph_0(N)$ and $\ph_1(A_1)=\ph_1(N)$. Suppose that the domains $A_i$, $i=0,1$ are  faithful domains. Let $Q=N \setminus (A_0 \cup A_1)$  and suppose $\wt{f}(A_i)=A_i$. If $\inte(Q)$ (the interior of $Q$) does not contain a component that is a topological annulus homotopic to $N$ then
$$
\rho(\wt{f},A_0)+\rho(\wt{f},A_1)=\rho(\wt{f},N).
$$
\noindent
If $A$ is a component of $\inte(Q)$ that is a topological annulus homotopic to $N$, and if $A$ is a faithful domain, then
$$
\rho(\wt{f},A_0)+\rho(\wt{f},A_1)+\rho(\wt{f},A) =\rho(\wt{f},N).
$$
\end{lemma}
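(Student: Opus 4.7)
The plan is to lift everything to the strip $P$ and combine Proposition~\ref{uniform-twist-1} applied to the lifts of $A_0,A_1$ with Proposition~\ref{uniform-twist} applied to $P$ itself. Let $\pi:\overline{P}\to\overline{N}$ be the covering with deck group generated by $T(z)=z+1$, and let $\wh{f}:\overline{P}\to\overline{P}$ be a lift of $\wt{f}$. Let $\Omega_0,\Omega_1\subset P$ be the lifts of $A_0,A_1$; each is a $T$-invariant, simply connected, faithful domain with $\wh{f}(\Omega_i)=\Omega_i$, and by hypothesis $\ph_0(\Omega_0)=\ph_0(P)$ while $\ph_1(\Omega_1)=\ph_1(P)$. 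Pick $z_0\in\ph_0(P)$, $z_1\in\ph_1(P)$, $w_0\in\ph_1(\Omega_0)$, $w_1\in\ph_0(\Omega_1)$.

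Proposition~\ref{uniform-twist-1} applied to $\Omega_0$ with pair $(z_0,w_0)$ and to $\Omega_1$ with pair $(w_1,z_1)$, together with Proposition~\ref{uniform-twist} applied to $P$ with pair $(z_0,z_1)$, gives three estimates for the real-part drifts of $\wh{f}$-iterates, each with a uniformly bounded additive error. Subtracting the first two from the third to eliminate $z_0$ and $z_1$ yields
\[
Re(\wh{f}^{n}(w_1))-Re(\wh{f}^{n}(w_0)) = (Re(w_1)-Re(w_0)) + n\bigl(\rho(\wt{f},N)-\rho(\wt{f},A_0)-\rho(\wt{f},A_1)\bigr) + B_n,
\]
where $|B_n|$ is bounded independently of $n$. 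Thus the lemma reduces to showing that the left-hand side is $O(1)$ in Case~1, and equals $n\rho(\wt{f},A)+O(1)$ in Case~2. Case~2 follows from the Case~1 argument applied separately to the pair $(\Omega_0,\Omega)$ and to the pair $(\Omega,\Omega_1)$ --- where $\Omega\subset P$ is the faithful lift of $A$ --- together with one more application of Proposition~\ref{uniform-twist-1} to $\Omega$ itself, with boundary pair $(u_0,u_1)$, and the telescoping identity $Re(\wh{f}^{n}(w_1))-Re(\wh{f}^{n}(w_0)) = [Re(\wh{f}^{n}(w_1))-Re(\wh{f}^{n}(u_1))] + [Re(\wh{f}^{n}(u_1))-Re(\wh{f}^{n}(u_0))] + [Re(\wh{f}^{n}(u_0))-Re(\wh{f}^{n}(w_0))]$.

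The heart of the proof, and the main obstacle, is the following general Case~1 claim: given disjoint, $T$-invariant, simply connected, faithful, $\wh{f}$-invariant domains $\Omega,\Omega'\subset P$ such that no $T$-invariant connected component of $P\setminus(\Omega\cup\Omega')$ projects to an essential subannulus of $N$, every pair $p\in\ph_1(\Omega)$, $q\in\ph_0(\Omega')$ satisfies $Re(\wh{f}^{n}(q))-Re(\wh{f}^{n}(p))=O(1)$. To prove this, set $R=P\setminus(\Omega\cup\Omega')$, a closed $T$- and $\wh{f}$-invariant subset of $P$; the no-essential-annulus hypothesis forces each connected component of $\inte(R)$ to be either bounded in $P$, or $T$-invariant but failing to separate $\ph_1(\Omega)$ from $\ph_0(\Omega')$. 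The plan is then to join $p$ to $q$ by a path in $R$ of uniformly bounded $Re$-length, built from finitely many arcs each lying in a single $\wh{f}$-invariant piece (a bounded component of $\inte(R)$, or an arc of $\ph_1(\Omega)$, or an arc of $\ph_0(\Omega')$); the faithfulness of $\Omega$ and $\Omega'$ (Definition~\ref{def-faithful}) is exactly what prevents pathological complementary pockets along the frontier curves from forcing unbounded horizontal excursions. Since $\wh{f}$ preserves the combinatorial decomposition of this path and is uniformly continuous modulo $T$, the diameter bound is transported to every iterate, yielding the required $O(1)$ estimate. The subtle topological step is precisely this translation of the annulus-free hypothesis into uniform bounded-diameter control of the components of $\inte(R)$, and this is where all the hypotheses of the lemma are used.
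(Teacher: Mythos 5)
Your algebraic framework is the same as the paper's: lift to $P$, apply Proposition~\ref{uniform-twist-1} to the lifts $\Omega_0,\Omega_1$ and Proposition~\ref{uniform-twist} to $P$, and subtract so that the theorem reduces to a uniform $O(1)$ bound on $Re(\wh{f}^n(w_1))-Re(\wh{f}^n(w_0))$ for $w_0\in\ph_1(\Omega_0)$, $w_1\in\ph_0(\Omega_1)$. But the step you yourself flag as ``the heart of the proof, the main obstacle, the subtle topological step'' is exactly what you do not prove, and your proposed plan for it is flawed. The paper dispatches it in one sentence with a different observation: since $\inte(Q)=N\setminus(\overline{A_0}\cup\overline{A_1})$ has no component that is an essential annulus, the closures $\overline{A_0}$ and $\overline{A_1}$ must touch, i.e.\ there is a point $r\in\ph(A_0)\cap\ph(A_1)$. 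Lifting $r$ gives a single point $\wh r$ lying simultaneously on $\ph_1(\Omega_0)$ and $\ph_0(\Omega_1)$, so the two applications of Proposition~\ref{uniform-twist-1} link up at $\wh r$ and telescope with no extra bridging estimate at all. (In Case~2 the paper likewise produces pivot points $r_i\in\ph(A_i)\cap\ph(A)$, $i=0,1$, and chains three such estimates.)

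Your substitute plan --- joining $p$ to $q$ by a path in $R=P\setminus(\Omega\cup\Omega')$ of uniformly bounded $Re$-length, decomposed into arcs in $\wh f$-invariant pieces --- does not go through as sketched. The bounded components of $\inte(R)$ are \emph{permuted}, not preserved, by $\wh f$, so there is no a priori uniform bound on the $Re$-diameter of $\wh f^n(C)$ as $n\to\infty$ for such a component $C$; uniform continuity of $\wh f$ modulo $T$ controls a single step, not the cumulative drift over $n$ iterates, and ``preserving the combinatorial decomposition'' of the path does not by itself supply that control. Faithfulness does give that any two points on a single frontier curve $\ph_1(\Omega)$ (resp.\ $\ph_0(\Omega')$) drift within $O(1)$ of each other, but to cross from one frontier curve to the other you would still need the two curves to meet --- which is the paper's observation, and the place where the no-essential-annulus hypothesis is actually used. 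Without that observation your argument has a genuine gap.
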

\begin{proof} By $\wh{f}:\overline{P} \to \overline{P}$ we denote a lift of $\wt{f}$. Assume first that $\inte(Q)$ does not contain a component that is a toplogical annulus homotopic to $N$.  Since  $\inte(Q)$ does not contain a component that is a toplogical annulus homotopic to $N$ we conclude that there exists a point $r \in Q$ such that $r \in \ph{A_0} \cap \ph{A_1}$.  This point $r$ is fixed from now on.
Let $s_i \in \ph_i(N)$ be any two  points. Let $\wh{s}_i$, $i=0,1$, and $\wh{r}$ be lifts to $\overline{P}$ of the points
$s_i$, $i=0,1$, and $r$, respectively. Let $\Omega_i$, $i=0,1$, be the lifts of $A_i$ to $P$. Then $\wh{r} \in \ph\Omega_0 \cap \ph\Omega_1$.
We apply Proposition \ref{uniform-twist-1} to the pair $\wh{s}_0$ and $\wh{r}$, and then to the pair $\wh{s}_1$ and $\wh{r}$. This proves the proposition in this case.

\vskip .1cm
The case when $Q$ contains a component that is a toplogical annulus homotopic to $N$ is handled in a similar way. Assume that $\inte(Q)$ has a connected component $A$ that is a topological annulus homotopic to $N$. Since $Q$ is connected such $A$ is unique. Moreover there exist points $r_i \in \ph{A_i} \cap \ph{A}$, $i=0,1$.
Let $s_i \in \ph_i(N)$ be any two the points. Since the lifts of all three annuli are faithful domains the proof follows in the same way as above.

\end{proof}

\subsection{The $K$-idle set of an annulus homeomorphism}
We have the following definition.

\begin{definition} Let $\wt{f}:\overline{N} \to \overline{N}$ be a homeomorphism that setwise preserves the sets
$\ph_0(N)$ and $\ph_1(N)$. Let $w \in N$ and let  $K>0$.
We say that the point $w$ is $K$-idle for $\wt{f}$ if there exists a lift $\wh{f}:\overline{P} \to \overline{P}$  of $\wt{f}$ such that
\begin{equation}\label{K-idle}
|Re(\wh{f}^{n}(z))-Re(\wh{f}^{m}(z))| \le K,
\end{equation}
\noindent
for every $m, \, n \in \Z$, and for every $z \in P$ that is a lift of $w$.
The set of all $K$-idle points for $\wt{f}$ is denoted by $\Idle (K,\wt{f},N)$.
\end{definition}
Since $\wh{f}$ commutes with the translation $T(z)$ we have that  the condition (\ref{K-idle}) holds for one lift $z \in P$ of $w$ if and only if it holds for every such lift $z \in P$ of $w$. If $w \in \Idle (K,\wt{f},N)$ then there exists a unique lift $\wh{f}:\overline{P} \to \overline{P}$ such that
(\ref{K-idle}) holds. Also it follows from the definition that $\wt{f}(\Idle (K,\wt{f},N))=\Idle (K,\wt{f},N)$ (this follows from (\ref{K-idle}) ).
\vskip .1cm
\begin{remark} Let $\wt{e}$ be a conformal involution of $N$ and assume that $\wt{f}$ commutes with $\wt{e}$. Since every lift $\wh{e}:P \to P$ of $\wt{e}$ is an Euclidean isometry, we conclude that $\wt{e}(\Idle (K,\wt{f},N))=\Idle (K,\wt{f},N)$.
\end{remark}
\vskip .1cm
Let $w_i \in \Idle(K,\wt{f},N)$, $i=1,2$,  and let $\wh{f}_i$ denote the corresponding lift  so that the condition (\ref{K-idle}) holds for every lift of $w_i$ to $P$. We say that $w_1$ and $w_2$ are equivalent, $w_1 \sim w_2$, if $\wh{f}_1=\wh{f}_2$.

\begin{proposition}\label{idle-1} The set $\Idle (K,\wt{f},N)$  is a relatively closed subset of $N$. Let $Q$ be a connected component of $\Idle (K,\wt{f},N)$. Then every two points in $Q$ are equivalent. If in addition we have $\rho(\wt{f},N) \ne 0$ then $\overline{Q}$ does not connect the two boundary circles
$\ph_0(N)$ and $\ph_1(N)$.
\end{proposition}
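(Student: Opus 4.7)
My approach is to fix once and for all a reference lift $\wh{f}_0:\overline{P}\to\overline{P}$ of $\wt{f}$ and parameterise every other lift by the integer $c$ in $\wh{f}=T^c\wh{f}_0$. Because $\wh{f}_0$ commutes with $T$, one has $(T^c\wh{f}_0)^n=T^{cn}\wh{f}_0^n$ for every $n\in\Z$, so setting $m=0$ in the definition, $K$-idleness of $w\in N$ is equivalent to the existence of an integer $c=c(w)$ satisfying
$$
|cn+Re(\wh{f}_0^n(z))-Re(z)|\le K
$$
for some (hence every) lift $z$ of $w$ and for every $n\in\Z$. The integer $c$ is uniquely determined because dividing by $n$ and letting $n\to\pm\infty$ pins it down. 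All three claims will be read off from this one inequality.

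\emph{Part 1 (relatively closed).} Let $w_k\in\Idle(K,\wt{f},N)$ with $w_k\to w\in N$. Pick lifts $z_k$ of $w_k$ inside a bounded fundamental domain, pass to a subsequence so $z_k\to z$ in $P$, and note $z$ is a lift of $w$. The case $n=1$ of the inequality gives $|c_k+Re(\wh{f}_0(z_k))-Re(z_k)|\le K$; the right-hand side converges, so $\{c_k\}$ is a bounded sequence of integers, and after a further subsequence $c_k=c$ is constant. For each fixed $n$, continuity of $\wh{f}_0^n$ on $\overline{P}$ lets me pass to the limit and obtain $|cn+Re(\wh{f}_0^n(z))-Re(z)|\le K$, so $w\in\Idle(K,\wt{f},N)$.

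\emph{Part 2 (equivalence on components).} It suffices to show that each equivalence class is relatively open in $\Idle(K,\wt{f},N)$; then the classes partition the set into clopen pieces, and each connected component $Q$ lies in a single class. Fix $w\in\Idle$ with constant $c$, let $w'\in\Idle$ be close to $w$ with constant $c'$, and choose lifts $z,z'$ with $z'\to z$. Subtracting the idleness inequalities for $w$ and $w'$ yields
$$
|(c'-c)n|\le 2K+|Re(\wh{f}_0^n(z'))-Re(\wh{f}_0^n(z))|+|Re(z')-Re(z)|.
$$
Fix any integer $n>2K$. By continuity of $\wh{f}_0^n$, for $z'$ close enough to $z$ the right-hand side is strictly less than $n$, which forces $|c'-c|<1$ and hence $c'=c$.

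\emph{Part 3 (boundary separation).} Assume for contradiction that $\rho(\wt{f},N)\ne 0$ while $\overline{Q}$ meets both $\ph_0(N)$ and $\ph_1(N)$. Let $\wh{f}$ be the common lift witnessing idleness on all of $Q$, furnished by Part 2. Choose $w_k\in Q$ with $w_k\to w_*\in\ph_0(N)$; taking lifts $\wh{w}_k\in P$ in a fundamental domain near $\ph_0(P)$ and passing to a subsequence, $\wh{w}_k\to\wh{w}_*\in\ph_0(P)$ with $\wh{w}_*$ a lift of $w_*$. For each fixed $j\in\Z$, the inequality $|Re(\wh{f}^j(\wh{w}_k))-Re(\wh{w}_k)|\le K$ passes to the limit by continuity of $\wh{f}^j$ on $\overline{P}$, giving $|Re(\wh{f}^j(\wh{w}_*))-Re(\wh{w}_*)|\le K$ for all $j$. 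Thus the restriction of $\wh{f}$ to $\ph_0(P)\cong\R$ (a homeomorphism commuting with $T$) has a bounded orbit, which by the discussion preceding Proposition 3.1 forces a fixed point and hence translation number $\rho_0=0$. Repeating the argument for a sequence converging to a point of $\overline{Q}\cap\ph_1(N)$ gives $\rho_1=0$, so $\rho(\wt{f},N)=\rho_1-\rho_0=0$, contradicting the hypothesis.

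The pivotal step is Part 2: without knowing that a single lift $\wh{f}$ governs the idleness condition throughout the whole component $Q$, the limiting argument of Part 3 would not yield a bounded orbit of a well-defined map on each boundary line of $P$, and the reduction to the translation-number dichotomy of Proposition 3.1 would collapse.
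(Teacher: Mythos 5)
Your proof is correct and takes essentially the same route as the paper's. Both proofs rest on the same two ideas: the lift parameter is an integer, so uniform continuity of the (finitely many needed powers of the) lift forces nearby idle points to share the same controlling lift; and if an idle component reached both boundary circles, the bounded-displacement property would persist to boundary points and contradict $\rho(\wt{f},N)\ne 0$. The paper compresses your Parts 1 and 2 into a single lemma (any two idle points with lifts at Euclidean distance $\le\delta_0$ have the same controlling lift, with $\delta_0$ chosen once via uniform continuity of $\wh{f}^{n_0}$ for a fixed $n_0>K+3$), which simultaneously yields closedness and equivalence on components; your split into a limit argument for closedness plus an openness-of-classes argument is a cosmetic reorganization of the same computation. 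For Part 3 the paper invokes Proposition \ref{uniform-twist} directly on the boundary points, whereas you recover $\rho_0=\rho_1=0$ from bounded orbits on $\ph_0(P)$ and $\ph_1(P)$ via the fixed-point characterization of translation number zero; these are interchangeable. One small inaccuracy worth flagging: your claim that $K$-idleness is ``equivalent'' to the $m=0$ instance $|cn+Re(\wh{f}_0^n(z))-Re(z)|\le K$ is not quite right — the paper's definition requires the two-sided bound $|Re(\wh{f}^n(z))-Re(\wh{f}^m(z))|\le K$ for \emph{all} $m,n\in\Z$, which is strictly stronger (the $m=0$ version only recovers it with constant $2K$). This does not affect your argument, since every limiting and subtraction step you perform passes through unchanged if you carry the full two-index inequality rather than its $m=0$ specialization, but the wording should be corrected from ``equivalent'' to ``necessary.''
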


\begin{proof}  Let $\wh{f}:\overline{P} \to \overline{P}$ be a lift of $\wt{f}$ and fix $n \in \Z$.  Since $\wh{f}$ commutes with the translation $T(z)$ we conclude that  $\wh{f}^{n}$ is uniformly continuous on $\overline{P}$. That is for every $\epsilon>0$  there exists $\delta=\delta(\epsilon,n)>0$ so that for every two points  $z_1,z_2 \in \overline{P}$ such that $|z_1-z_2| \le \delta$, we have  $|\wh{f}^{n}(z_1)-\wh{f}^{n}(z_2)| \le \epsilon$. Since every two lifts of $\wt{f}$ differ by a translation (which is an isometry in the Euclidean metric) we see that these constants do not depend on the choice of the lift $\wh{f}$.
\vskip .1cm
Fix $n_0 \in \N$ such that $n_0>K+3$ and let $\epsilon_0=1$. Set $\delta_0=\min\{\delta(\epsilon_0,n_0),1 \}$. Then for every lift $\wh{f}:\overline{P} \to \overline{P}$  we have
\begin{equation}\label{uniform-continuous}
|\wh{f}^{n_{0}}(z_1)-\wh{f}^{n_{0}}(z_2)| \le 1, \quad \quad \text{for every} \quad |z_1-z_2| \le \delta_0.
\end{equation}
\vskip .1cm
Let $w_i \in \Idle (K,\wt{f},N)$. Let $\wh{f}_i:\overline{P} \to \overline{P}$ be the lift of $\wt{f}$ such that the condition (\ref{K-idle}) holds with respect to $\wh{f}_i$. Then $w_1 \sim w_2$ if and only if $\wh{f}_1=\wh{f}_2$.
Assume that we can choose  lifts $z_1,z_2 \in P$, of $w_1$ and $w_2$ respectively, such that  $|z_1-z_2| \le \delta_0$. Then we show that $\wh{f}_1=\wh{f}_2$. This  implies that $w_1$ and $w_2$ are equivalent. In turn this  shows that the set $\Idle (K,\wt{f},N)$  is a relatively closed subset of $N$ and that every two points in $Q$ are equivalent.
\vskip .1cm
Next we show that $\wh{f}_1=\wh{f}_2$ providing that $|z_1-z_2| \le \delta_0$. Assume that $\wh{f}_1 \ne \wh{f}_2$. Then $\wh{f}_2 =T^{k} \circ \wh{f}_1$ for some $k \in \Z$, and $k \ne 0$. Since the condition (\ref{K-idle}) holds for $z_2$ and $\wh{f}_2$ we have
$$
|Re(\wh{f}^{n_{0}}_2(z_2))-Re(z_2)| \le K.
$$
\noindent
On the other hand we have
$$
|Re(\wh{f}^{n_{0}}_2(z_2))-Re(z_2)|=|kn_0+(Re(\wh{f}^{n_{0}}_1(z_2))-Re(z_2))| \ge |kn_0|- |Re(\wh{f}^{n_{0}}_1(z_2))-Re(z_2)|.
$$
\noindent
It follows from (\ref{uniform-continuous}) and the triangle inequality that (note that $\delta_0 \le 1$)
$$
|Re(\wh{f}^{n_{0}}_1(z_2))-Re(z_2)|\le
$$
$$
\le |Re(\wh{f}^{n_{0}}_1(z_2))-Re(\wh{f}^{n_{0}}_1(z_1))|+ |Re(\wh{f}^{n_{0}}_1(z_1))-Re(z_1)|+ |Re(z_1)-Re(z_2)|\le K+2,
$$
\noindent
that is
$$
|Re(\wh{f}^{n_{0}}_2(z_2))-Re(z_2)| \ge |kn_0|- (K+2)>|k|(K+3)-(K+2) \ge K+1.
$$
\noindent
But this is a contradiction so we conclude that $\wh{f}_1=\wh{f}_2$.
\vskip .1cm
Assume that $\rho(\wt{f},N) \ne 0$. Let $Q$ be a connected component of  $\Idle (K,\wt{f},N)$ and let $Q_1 \subset P$ be a connected component of the lift of $Q$ to $P$. Then by the previous argument we have that (\ref{K-idle}) holds for every $z \in Q_1$. By the continuity we have that
(\ref{K-idle}) holds for every $z \in \overline{Q_1}$. If $Q$ connects the two boundary circles of $N$ then $Q_1$ connects the two boundary lines of $P$.
Let $z_i \in \ph_i(P)$, $i=0,1$, be such that $z_i \in \overline{Q}_1$. Then for every $n \in \Z$ we have
$$
|Re(\wh{f}^{n}(z_1))-Re(z_1)| \le K, \quad i=0,1.
$$
\noindent
Since $\rho(\wt{f},N) \ne 0$ this contradicts  Proposition \ref{uniform-twist}.

\end{proof}

As above let $S$ denote  a compact Riemann surface (either closed or with  boundary).  Let $A \subset S$ be a topological annulus.
Let $\wt{f}:\overline {S} \to \overline{S}$ be a homeomorphism such that $\wt{f}(A)=A$. Let $\Phi:N \to A$ be a conformal map and set
$\wt{g}= \Phi^{-1} \circ \wt{f} \circ \Phi$. Since $\wt{f}$ is a homeomorphism of $\overline{A}$ it follows that $\wt{g}$ is a homeomorphism of $\overline{N}$. We define the corresponding set $\Idle(K,\wt{f},A) \subset A$ to be equal to $\Phi(\Idle(K,\wt{g},N))$.

\begin{proposition}\label{idle-2}  Let $\wt{f}:\overline{N} \to \overline{N}$ be a homeomorphism that has fixed points on both boundary circles of $N$ and that is homotopic to the standard twist modulo its fixed points on $\ph{N}$. Let $\wt{e}$ be a conformal involution of $N$ that exchanges the boundary circles of $N$ and suppose that $\wt{f}$ commutes with $\wt{e}$. Let  $A \subset N$ be a topological annulus homotopic to $N$, such that $\wt{e}(A)=\wt{f}(A)=A$, and assume that $\rho(\wt{f},A)$ is an odd integer. Fix $K>0$ and suppose that there exists a connected component $Q$ of the set  set  $\Idle(K,\wt{f},A)$ such that $Q$ is compactly contained in $A$ and that $Q$ separates the two frontier components of $A$.
Then there exists a topological annulus $A_1 \subset A$, and $A \ne A_1$, such that $\wt{e}(A_1)=\wt{f}(A_1)=A_1$, and $\rho(\wt{f},A_1)$ is an odd integer.
\end{proposition}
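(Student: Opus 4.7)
My plan is to apply Lemma~\ref{sum-twist-number} to a $\wt f, \wt e$-invariant decomposition of $A$ produced by $Q$, and then use the $\wt e$-symmetry together with the oddness of $\rho(\wt f, A)$ to force the middle sub-annulus in that decomposition to carry an odd twist; this middle sub-annulus will be $A_1$.

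First I record the topology of $Q$ inside $A$. Since $Q$ is a compact connected subset of the open annulus $A$ separating its frontier components, $Q$ is essential (not null-homotopic) in $A$, and so $A \setminus Q$ has a unique connected component $V_0$ whose closure meets $\ph_0(A)$ and a unique component $V_1$ whose closure meets $\ph_1(A)$; both are open topological sub-annuli with $\ph_i(V_i) = \ph_i(A)$. The set $\Idle(K, \wt f, A)$ is invariant under both $\wt f$ (by definition) and $\wt e$ (by the remark after the definition of $K$-idle), so $\wt e(Q)$ and every $\wt f^n(Q)$ is also a separating component of the idle set compactly contained in $A$. By the observation in the proof of Proposition~\ref{idle-1} (extended across the $\wt f$-orbit, using that $\wh f_Q^{n+1}(z) = \wh f_Q(\wh f_Q^n(z))$ remains bounded), all points in this orbit share a single lift $\wh f_Q$, which forces the orbit to stay uniformly bounded horizontally in $P$ and hence compactly contained in $A$. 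Let $Q^{\#}$ be the $\wt f, \wt e$-invariant closed subset obtained from this orbit together with its $\wt e$-image; then $Q^{\#} \subset \Idle(K', \wt f, A)$ for some $K' > 0$, is compactly contained in $A$, and still separates the frontier components of $A$. The $\ph_0$- and $\ph_1$-components $V_0^{\#}, V_1^{\#}$ of $A \setminus Q^{\#}$ are then disjoint $\wt f$-invariant sub-annuli with $\wt e(V_0^{\#}) = V_1^{\#}$.

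Next I apply Lemma~\ref{sum-twist-number} to the decomposition of $A$ into $V_0^{\#}, V_1^{\#}$ and the middle region $A \setminus (\overline{V_0^{\#}} \cup \overline{V_1^{\#}})$:
\[
\rho(\wt f, V_0^{\#}) + \rho(\wt f, V_1^{\#}) + \rho(\wt f, A^{\mathrm{mid}}) = \rho(\wt f, A),
\]
where $A^{\mathrm{mid}}$ is the annular component of the middle region (interpreted as $0$ if no such component exists), and one first passes to faithful closures of $V_i^{\#}$ to verify the hypotheses of the lemma. Since any lift $\wh e : \overline P \to \overline P$ of $\wt e$ is a Euclidean isometry of the form $z \mapsto -z + c$, conjugation by $\wh e$ sends every lift of $\wt f$ to another lift of $\wt f$ (up to a translation $T^{\pm m}$), while simultaneously reversing the real direction and swapping the two frontiers of $V_0^{\#}$ with those of $V_1^{\#}$; a direct translation-number computation analogous to that in Proposition~\ref{uniform-twist} then yields $\rho(\wt f, V_0^{\#}) = \rho(\wt f, V_1^{\#})$. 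Hence their sum is an even integer, and the oddness of $\rho(\wt f, A)$ forces $\rho(\wt f, A^{\mathrm{mid}})$ to be odd. In particular $A^{\mathrm{mid}}$ is nonempty, and setting $A_1 := A^{\mathrm{mid}}$ gives a topological sub-annulus of $A$ satisfying $\wt e(A_1) = \wt f(A_1) = A_1$ (as the distinguished annular component of an $\wt e, \wt f$-invariant region) and $\rho(\wt f, A_1)$ odd; moreover $A_1 \neq A$ since $V_0^{\#}, V_1^{\#}$ are nonempty.

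The main obstacle is the first step, namely showing that $Q^{\#}$ is compactly contained in $A$: a priori the $\wt f$-orbit of $Q$ could accumulate on $\ph A$, and although the shared-lift observation controls its horizontal extent in $P$, one must still rule out vertical accumulation toward $\ph \Omega$, likely via a uniform control of the idle constant together with Proposition~\ref{uniform-twist-1}. Secondary technical points are verifying the faithful-domain hypothesis of Lemma~\ref{sum-twist-number} and the integrality of $\rho(\wt f, V_i^{\#})$, the latter following from the conformal fixed points on $\ph_i(A)$ inherited from the hypothesis together with fixed points of $\wt f$ on the inner frontier of $V_i^{\#}$ provided by Proposition~\ref{fixed-point} applied to $Q^{\#}$.
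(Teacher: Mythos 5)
Your overall strategy — symmetrize $Q$, split the annulus by the resulting separating set, feed the pieces into Lemma~\ref{sum-twist-number}, and use the $\wt e$-symmetry to get a parity contradiction on the outer pieces so the middle annulus carries the odd twist — is exactly the paper's strategy, and your $\wt e$-symmetry computation $\rho(\wt f, V_0^{\#})=\rho(\wt f, V_1^{\#})$ matches the paper's $\rho(\wt f, A_0)=\rho(\wt f, A_1)$. But there are two points where you diverge from the paper, and one of them leaves a genuine hole that you yourself flag.

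First, the paper does \emph{not} saturate $Q$ by the $\wt f$-orbit. It takes only $B' = Q\cup\wt e(Q)$ together with the holes of its complement, and this is enough. The entire difficulty you identify as ``the main obstacle'' — controlling vertical accumulation of $\wt f^n(Q)$ toward $\ph A$ so that $Q^{\#}$ stays compactly contained — is a problem you created by choosing to take the orbit, and you never resolve it; as written, your proof does not go through. The paper gets what it needs from $Q\cup\wt e(Q)$ alone by invoking Proposition~\ref{fixed-point}: since the inner frontier $\ph_1(A_0)$ lies inside $Q\cup\wt e(Q)\subset\Idle(K,\wt f,A)$, $\wt f$ has a conformal fixed point there, and together with the fixed point on $\ph_0(N)$ coming from the hypothesis on $\wt f$, this makes $\rho(\wt f,A_i)$ an integer without ever having to argue about the $\wt f$-orbit of $Q$. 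You relegate this integrality to a ``secondary technical point'' at the end, but it is in fact the load-bearing step of the paper's argument, not a loose end.

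Second, the paper decomposes $N$, not $A$. Its outer annuli $A'_0,A'_1$ satisfy $\ph_0(A'_0)=\ph_0(N)$, $\ph_1(A'_1)=\ph_1(N)$, which is exactly the hypothesis of Lemma~\ref{sum-twist-number} as stated; it then uses $\rho(\wt f,N)=1$ (odd, by the standard-twist hypothesis). You decompose $A$ and invoke the oddness of $\rho(\wt f,A)$, but Lemma~\ref{sum-twist-number} as stated concerns $N$, so you would need to transfer it conformally to $A$ before applying it; the paper's choice avoids this. The containment $A_1\subset A$ and $A_1\neq A$ then come for free in the paper's setup because $B'$ is compactly contained in $A$, hence $N\setminus A\subset A_0\cup A_1$ and so the middle piece $D$ sits strictly inside $A$.

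So: right approach, same key lemma, same parity mechanism, but you introduced an orbit-saturation step the paper avoids and this step carries an unrepaired gap, while the paper's replacement for it — Proposition~\ref{fixed-point} applied to $Q\cup\wt e(Q)$, and decomposing $N$ rather than $A$ — is both simpler and complete.
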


\begin{proof}
The set $Q$ is compact in $A$ and it separates the two frontier components of $A$ (and therefore $Q$ separates the two boundary circles of $N$).
Set $Q_1=\wt{e}(Q)$. Let $B'$ be union of  $Q \cup Q_1$ and every connected component of the set  $N \setminus (Q \cup Q_1)$ whose boundary is contained in  $Q \cup Q_1$.
Then $B'$ is a closed set that is compactly contained in $A$. Then the set $N \setminus B'$  is a disjoint union of two topological annuli $A'_0$ and $A'_1$ that  satisfy the conditions
\begin{itemize}
\item $\ph_0(A'_0)=\ph_0(N)$ and $\ph_1(A'_1)=\ph_1(N)$,
\item $\ph_1(A'_0), \ph_0(A'_1) \subset (Q \cup Q_1)$.
\end{itemize}
We construct the sets $A_i$, $i=0,1$, as follows. Let $A''_i$ be the union of $A'_i$ and every connected component of the set
$N \setminus \overline{A'}_i$ whose boundary is contained in $\overline{A'}_i$. Set $A_i=\inte(A''_i)$. Then
$A_1=\wt{e}(A_0)$. Note that $A_0 \cap A_1=\emptyset$ and each $A_i$ is a topological annulus  that is a faithful set in the sense of Definition \ref{def-faithful}. Set $B= N \setminus (A_0\cup A_1)$.  Note that $\ph_1(A_0), \, \ph_0(A_1) \subset (Q \cup Q_1)$.
\vskip .1cm
Recall that we are assuming that $\wt{f}$ has a fixed point on both boundary circles of $N$. This implies that $\wt{f}$ has a conformal fixed point on the corresponding frontier component of $A_i$ that agrees with the corresponding boundary circle of $N$. But $\wt{f}$ also has a conformal fixed point on the other frontier component of $A_i$, the one that is contained in $Q \cup Q_1$. This easily follows from Proposition \ref{fixed-point} and the assumption that $Q$ is a subset of $\Idle(K,\wt{f},A)$.
So we have that $\wt{f}$ has conformal fixed points on both frontier components of $A_i$, $i=0,1$.
Therefore we have that $\rho(\wt{f},A_i)$ is an integer. Since $A_1=\wt{e}(A_0)$ we have that  $\rho(\wt{f},A_0)=\rho(\wt{f},A_1)$ (because $\wt{f}$ commutes with $\wt{e}$). If $\inte(B)$ (the interior of $B$) does not contain an annulus homotopic to $N$ then
by Lemma \ref{sum-twist-number} we conclude that $\rho(\wt{f},N)= \rho(\wt{f},A_0)+\rho(\wt{f},A_1)=2\rho(\wt{f},A_0)$, that is $\rho(\wt{f},N)$ is an even integer which is a contradiction. So $\inte(B)$ contains an annulus homotopic to $N$. Denote this annulus by $D'$.
Let $D''$ be the union of $D'$ and every connected component of the set
$N \setminus \overline{D'}$ whose boundary is contained in $\overline{D'}$. Set $D=\inte(D'')$.
Then $D$ is a faithful domain and $\ph{D} \subset B'$. Moreover the annuli $A_0$, $A_1$ and $D$ are mutually disjoint.
Again by Lemma \ref{sum-twist-number} we have that $\rho(\wt{f},N)= \rho(\wt{f},A_0)+\rho(\wt{f},A_1)+\rho(\wt{f},D)=2\rho(\wt{f},A_0)+\rho(\wt{f},D)$. This implies that $\rho(\wt{f},D)$ is an odd integer. Since $D \subset A$ and $D \ne A$ we have found the required annulus which proves the proposition.

\end{proof}

\section{The minimal annulus}
\subsection{The groups $\Gamma(\ac_i,\bc_j)$ and the characteristic annulus}
From now on we assume that there exists a homomorphic section $\E:\MC(M) \to \Homeo(M)$, where $M$ is a surface of genus $\g \ge 2$.
By the end of the paper we will obtain a contradiction. From now on we fix the complex structure on $M$ so that the homeomorphism $\E(e)$ is a conformal involution (it is well known that such a structure exists).
\vskip .1cm
Recall from Section 2.1 the definition of curves $\alpha_i,\beta_j$, and $\gamma$ (and $\gamma_1$ in case when $\g$ is odd).
For a fixed pair $(i,j) \in \{(1,1),(1,2),(2,1),(2,2) \}$ let
$\Gamma(\ac_i,\bc_j)$ be the group generated by the homeomorphisms
$\E(t_{\alpha_k}), \, \E(t_{\beta_l})$, where $\alpha_k \in \ac_i$ and   $\beta_l \in \bc_j$.
Let $\Se(i,j)$ denote the minimal decomposition of $M$ for the group  $\Gamma(\ac_i,\bc_j)$, and let
$M_{\Se(i,j)}$ be the union  all acyclic components of  $\Se(i,j)$.
By $\Se(\gamma)$ we denote the minimal decomposition for the homeomorphism $\E(t_{\gamma})$, and by
$M_{\Se(\gamma)}$ we denote the union of all acyclic components in $\Se(\gamma)$.
\vskip .1cm
It follows from Lemma 2.1 that if $\gamma$ is a separating curve (that is $\g$ is even)
then there exist precisely two non-planar components of
$M_{\Se(\gamma)}$. Each component has exactly one end and this end is
homotopic to $\gamma$. Let $M_{\gamma}$ be the union of these two
components. If $\gamma$ is non-separating, then $M_{\Se(\gamma)}$ has
precisely one non-planar component, and this component has two ends that are both
homotopic to $\gamma$. Let $M_{\gamma}$ be this non-planar component in this case.
\vskip .1cm
Then $\E(t_{\gamma})$ setwise preserves $M_{\gamma}$. Since the involution $e$ commutes with $t_{\gamma}$ we have that
$\E(e)$ setwise preserves $M_{\gamma}$. If $\g$ is even then $\E(e)$ exchanges the two components of $M_{\gamma}$ and if $\g$ is odd then $\E(e)$ exchanges the two ends of $M_{\gamma}$. Set $B_{\gamma}=M \setminus M_{\gamma}$. In both cases ($\g$ odd or even)
we have that $B_{\gamma}$ is a closed, connected and  non-acyclic subset of $M$, and $\E(t_{\gamma})(B_{\gamma})=\E(e)(B_{\gamma})=B_{\gamma}$.
\vskip .1 cm
Let $\gamma'\subset M_{\gamma}$ and $\gamma''=\E
(e)(\gamma')$ be two simple closed curves on $M$
representing the two ends of $M_{\gamma}=M \setminus B_{\gamma }$. Then
$M_{\gamma} \setminus \{\gamma',\gamma''\}$ has either three or four
components depending whether $M_{\gamma}$ is connected or not. We
denote by $M_{\gamma}(\gamma',\gamma'')$ either the component, or the union
of two components of $M_{\gamma} \setminus \{\gamma',\gamma''\}$
whose boundary consists only of $\gamma'$ and $\gamma''$. Then
$$
\overline{M_{\gamma}(\gamma',\gamma'')}=
M_{\gamma}(\gamma',\gamma'')\cup \gamma' \cup \gamma''
$$
\noindent
is the closure
of $M_{\gamma}(\gamma',\gamma'')$. Note that $\E(e)$ setwise preserves $M_{\gamma}(\gamma',\gamma'')$ (but $\E(t_{\gamma})$ does not necessarily preserve this set).
\vskip .1cm
Let $C_{\gamma}(\gamma',\gamma'')$ denote the union of all acyclic
components of $\Se(\gamma)$ which intersect $\overline{M_{\gamma}(\gamma',\gamma'')}$.
The set  $C_{\gamma}(\gamma',\gamma'')$ is a compact subset of $M_{\gamma}$
which is either connected or it has two components depending whether
$\gamma$ is a non-separating or a separating curve. Note that $\E(t_{\gamma})$ setwise preserves the set
$C_{\gamma}(\gamma',\gamma'')$ (because this set is a union of components from $\Se(\gamma)$ and these by definition are setwise preserved by
$\E(t_{\gamma})$). Also, $\E(e)$ setwise preserves $C_{\gamma}(\gamma',\gamma'')$. This follows from the fact that $\E(e)$ setwise preserves $M_{\gamma}(\gamma',\gamma'')$, and since the homeomorphisms $\E(e)$ and $\E(t_{\gamma})$ commute, we have that if $S \in M_{\Se(\gamma)}$
then $\E(e)(S) \in  M_{\Se(\gamma)}$ as well.
\vskip .1cm
Let
$A_{\chara}$ denote the component of the complement of $C_{\gamma}(\gamma',\gamma'')$  that contains
$B_{\gamma}$. Then $A_{\chara}$ is an open topological annulus (or just annulus) homotopic to $\gamma$.
The annulus $A_{\chara}$ is called the characteristic annulus (see \cite{ma}).
We have
\begin{equation} \label{A-0}
\E(t_{\gamma})(A_{\chara})=\E(e)(A_{\chara})=A_{\chara}.
\end{equation}
\noindent
Let $\Phi:N \to A_{\chara}$ be a conformal map to the corresponding geometric annulus $N$.
By \cite[Lemma 5.1]{ma}, the map $\Phi^{-1} \circ \E (t_{\gamma}) \circ \Phi:\overline{N}\to \overline{N}$ has at least one fixed point on each boundary
component of $\overline{N}$ and it is homotopic (modulo its fixed points on the boundary) to the standard twist homeomorphism.
\vskip .1cm
The annulus $A_{\chara}$ depends on the choice of $\gamma'$ and $\gamma''$ but this is not relevant in this paper
(that is we will not be changing the choice of these two curves). From now on $A_{\chara}$ is fixed and we will return to it later.

\begin{lemma}\label{minimal-1}  Fix  $i,j \in \{1,2\}$.  The set   $M_{\Se(i,j)}$ contains a unique connected component  $M_{\Se(i,j)}(\gamma)$ with the following properties
\begin{itemize}
\item  $M_{\Se(i,j)}(\gamma)$ has negative Euler characteristic.
\item $M_{\Se(i,j)}(\gamma)$ contains a curve homotopic to $\gamma$.
\item No end of $M_{\Se(i,j)}(\gamma)$ is homotopic to $\gamma$ and every end of $M_{\Se(i,j)}(\gamma)$ is essential in $M$ (this means that
every end of  $M_{\Se(i,j)}(\gamma)$ is homotopic to a simple closed curve in $M$ that is not homotopically trivial).
\end{itemize}
\end{lemma}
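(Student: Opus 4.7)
The plan is to apply Lemma 2.1 to the disjoint family $\ac_i\cup\bc_j$ of simple closed curves. First I would verify that $\ac_i\cup\bc_j$ satisfies the hypotheses of Lemma 2.1: the curves are mutually disjoint, no two are homotopic, and none is homotopically trivial. In the even genus case the $\alpha$-curves lie strictly in the left component of $M\setminus\gamma$ and the $\beta$-curves strictly in the right component, and within each chain $\ac_i$ and $\bc_j$ are alternating subfamilies, hence disjoint. In the odd case the same analysis goes through inside $M\setminus(\gamma\cup\gamma_1)$, with the bookkeeping observation that if $i=j=2$ then $\gamma_1$ appears exactly once in $\ac_i\cup\bc_j$.

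Next, let $R_\gamma$ be the connected component of $M\setminus(\ac_i\cup\bc_j)$ that contains the curve $\gamma$. I would verify by direct inspection of Figures \ref{genus-2}--\ref{genus-5} that in each of the four possible pairs $(i,j)$ and each parity of $\g$, the region $R_\gamma$ has negative Euler characteristic; it is built from an annular neighborhood of $\gamma$ (or $\gamma\cup\gamma_1$) together with surface pieces from the left and right sides that get cut off by the curves of $\ac_i$ and $\bc_j$ respectively. Lemma 2.1 then provides a unique component of $M_{\Se(i,j)}$ homotopic to $R_\gamma$; call it $M_{\Se(i,j)}(\gamma)$.

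The three listed properties then follow quickly. Negative Euler characteristic is immediate from the homotopy equivalence with $R_\gamma$. Since $\gamma\subset R_\gamma$ is essential and non-peripheral in $R_\gamma$, the homotopy equivalence carries $\gamma$ to a curve in $M_{\Se(i,j)}(\gamma)$ homotopic to $\gamma$ in $M$. Every end of $R_\gamma$ is homotopic in $M$ to some curve in $\ac_i\cup\bc_j$; none of these curves is homotopic to $\gamma$ (the $\alpha$'s and $\beta$'s lie on one specified side of the chain, whereas $\gamma$ is the separating/cut curve), and each is essential in $M$ by construction.

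For uniqueness, suppose $M_2$ is another component of $M_{\Se(i,j)}$ satisfying the three bullets. Each $\E(t_\alpha)$ with $\alpha\in\ac_i\cup\bc_j$ setwise preserves $M_2$ by admissibility of $\Se(i,j)$; since $M_2$ is acyclic and has essential ends, none homotopic to $\gamma$, a standard surface-topology argument allows $M_2$ to be isotoped off every curve in $\ac_i\cup\bc_j$ and hence into a single component of $M\setminus(\ac_i\cup\bc_j)$, and the only possibility is $R_\gamma$ since $M_2$ carries a curve homotopic to $\gamma$. The no-end-homotopic-to-$\gamma$ hypothesis then prevents $M_2$ from being a proper essential subsurface of $R_\gamma$ containing the $\gamma$-annulus, so $M_2$ must be homotopic to all of $R_\gamma$; the uniqueness clause in Lemma 2.1 then forces $M_2=M_{\Se(i,j)}(\gamma)$. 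The main obstacle is this last step: ruling out the possibility that $M_2$ is a proper essential sub-subsurface of $R_\gamma$ that still contains a curve homotopic to $\gamma$, for which one needs the full strength of the essential-ends condition together with the fact that an annular neighborhood of $\gamma$ in $R_\gamma$ must either be contained in $M_2$ or produce an end of $M_2$ homotopic to $\gamma$.
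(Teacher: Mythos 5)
Your proof follows the same approach as the paper: choose representatives of $\ac_i\cup\bc_j$ to be pairwise disjoint and disjoint from $\gamma$, take the component $R_\gamma$ (the paper calls it $C$) of $M\setminus(\ac_i\cup\bc_j)$ containing $\gamma$, verify it has negative Euler characteristic with every end essential and none homotopic to $\gamma$, and apply Lemma~\ref{acyclic-component} to produce $M_{\Se(i,j)}(\gamma)$ homotopic to $R_\gamma$. The paper's own proof stops after invoking Lemma~\ref{acyclic-component} and does not spell out why no \emph{other} component of $M_{\Se(i,j)}$ could also satisfy the three bullets; your extra paragraph about uniqueness (isotoping a hypothetical $M_2$ off all the curves, then arguing it must be homotopic to all of $R_\gamma$) goes beyond what the paper records, and you rightly flag it as the most delicate step, but the argument you sketch is plausible and the existence half, which is what the paper actually writes down, matches exactly.
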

\begin{remark} The homeomorphism $\E(e)$ permutes the sets $M_{\Se(i,j)}(\gamma)$ that is  $\E(e)(M_{\Se(i,j)}(\gamma))=M_{\Se(j,i)}(\gamma)$.
\end{remark}

\begin{proof}
Observe that we may choose the curves in $\ac_i$ and $\bc_j$ such that every two curves from $\ac_i \cup \bc_j$ are mutually disjoint  and any curve from $\ac_i \cup \bc_j$  is disjoint from the curve $\gamma$ (see Figure \ref{genus-2}, Figure \ref{genus-4}, Figure \ref{genus-3}). Then the set $M \setminus
(\ac_i \cup \bc_j)$ contains a component $C$ of negative Euler
characteristic such that $\gamma \subset C$ and that no end of $C$ is homotopic to $\gamma$. Moreover every end of $C$ is essential in $M$. By Lemma 2.1 we conclude that $M_{\Se(i,j)}$ has a component $M_{\Se(i,j)}(\gamma)$ homotopic to $C$.
\end{proof}

\begin{proposition}\label{minimal-2} Let $A \subset M$ be an annulus with the following properties
\begin{enumerate}

\item The annulus $A$ is homotopic to $\gamma$.
\item $\E(t_{\gamma})(A)=A$.
\item The twist number $\rho(\E(t_{\gamma}),A)$ is an odd integer.
\end{enumerate}
Let $\Gamma(\ac_i,\bc_j)$ be one of the four groups we defined above.
Then there exists a unique connected component $A_1$ of the set $A \cap M_{\Se(i,j)}(\gamma)$ such that that  $A_1$ is a topological annulus homotopic to $\gamma$. Moreover $A_1$  satisfies the properties $(1)$, $(2)$ and $(3)$. In fact we have  $\rho(\E(t_{\gamma}),A)=\rho(\E(t_{\gamma}),A_1)$.
\end{proposition}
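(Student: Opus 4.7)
The proof proceeds in three steps: (i) show $\E(t_\gamma)$ setwise preserves $M_{\Se(i,j)}(\gamma)$; (ii) construct $A_1$ via the annular cover of $M$ corresponding to $\gamma$; (iii) verify conditions (1)--(3) and the twist-number equality.

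\smallskip

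\emph{Step 1 (invariance).} Every curve in $\ac_i\cup\bc_j$ is disjoint from $\gamma$ (as in the proof of Lemma \ref{minimal-1}), so $t_{\alpha_k}$ and $t_{\beta_l}$ commute with $t_\gamma$ in $\MC(M)$; since $\E$ is a homomorphism, $\E(t_\gamma)$ commutes with every generator of $\Gamma(\ac_i,\bc_j)$. Consequently the pushforward decomposition $\E(t_\gamma)_{*}\Se(i,j)$ is admissible for $\Gamma(\ac_i,\bc_j)$ and, by minimality (Theorem \ref{unique-minimal-decomposition}), coincides with $\Se(i,j)$. Hence $\E(t_\gamma)$ permutes the components of $\Se(i,j)$, and thus those of $M_{\Se(i,j)}$; since $\E(t_\gamma)$ preserves the class $[\gamma]$ and $M_{\Se(i,j)}(\gamma)$ is uniquely characterized by the three properties in Lemma \ref{minimal-1}, we conclude $\E(t_\gamma)(M_{\Se(i,j)}(\gamma))=M_{\Se(i,j)}(\gamma)$.

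\smallskip

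\emph{Step 2 (construction and uniqueness of $A_1$).} Let $\pi\colon N\to M$ be the annular cover of $M$ associated to $\langle\gamma\rangle\le\pi_1(M)$, so $N\cong S^{1}\times\R$. Both $A$ and $M_{\Se(i,j)}(\gamma)$ contain simple closed curves in $[\gamma]$, hence each has a distinguished lift to $N$ containing an essential curve: let $\wt A\subset\pi^{-1}(A)$ and $\wt M\subset\pi^{-1}(M_{\Se(i,j)}(\gamma))$ be the components homotopic to the core of $N$. I would then invoke the topological lemma: if $U,V\subset N$ are open and each contains a simple closed curve homotopic to the core of $N$, then $U\cap V$ has a unique connected component $W$ with the same property. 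This is proved by lifting to the universal-cover strip $P$: each lift contains a $T$-periodic simple arc joining the two boundary lines of $P$, and these arcs must meet since each separates the strip; one then extracts a $T$-invariant connected component of the intersection with the separation property. Applying this to $\wt A\cap\wt M$ yields a unique annular component $\wt A_1$, and $A_1:=\pi(\wt A_1)$ is the required unique annular component of $A\cap M_{\Se(i,j)}(\gamma)$ homotopic to $\gamma$ (a second such component in $M$ would lift to a second one in $N$, contradicting uniqueness).

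\smallskip

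\emph{Step 3 (verification of $(1)$--$(3)$).} Condition (1) holds by construction. For (2), $\E(t_\gamma)$ preserves $A$ by hypothesis and $M_{\Se(i,j)}(\gamma)$ by Step 1, so it preserves their intersection and permutes its components; by uniqueness of $A_1$, it fixes $A_1$ setwise. For (3) and the twist equality, replace $A_1$ by a faithful representative if needed and denote by $Q_0,Q_1$ the two $\E(t_\gamma)$-invariant collar annuli of $A\setminus\overline{A_1}$ adjacent to $\ph_0 A$ and $\ph_1 A$; Lemma \ref{sum-twist-number} applied to the decomposition of $A$ gives
$$\rho(\E(t_\gamma),A)=\rho(\E(t_\gamma),Q_0)+\rho(\E(t_\gamma),A_1)+\rho(\E(t_\gamma),Q_1).$$
The uniqueness of $A_1$ rules out any annular component of $Q_i\cap M_{\Se(i,j)}(\gamma)$ homotopic to $\gamma$; the non-acyclic components of $\Se(i,j)$ meeting $Q_i$ supply compact $\E(t_\gamma)$-invariant subsets of $Q_i$, and applying Lemma \ref{long-range} (together with the involution $\E(e)$ for the symmetry required by that lemma) forces $\rho(\E(t_\gamma),Q_i)=0$. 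Hence $\rho(\E(t_\gamma),A_1)=\rho(\E(t_\gamma),A)$, which is an odd integer by hypothesis, establishing (3) and the final equality.

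\smallskip

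\emph{Main obstacle.} Steps 1 and 2 are structural, relying only on the commutativity of Dehn twists for disjoint curves and on a separation argument in the annulus. The technically delicate point is Step 3: while Lemma \ref{sum-twist-number} reduces the twist equality to vanishing of the collar twists $\rho(\E(t_\gamma),Q_i)$, rigorously establishing this vanishing requires combining the uniqueness of $A_1$ with the long-range Lipschitz/idle-set machinery of Section 3 to rule out any hidden twisting in the collar regions.
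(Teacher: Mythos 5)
Your Step 1 is correct and matches the paper. Steps 2 and 3, however, depart from the paper's argument, and Step 2 contains a genuine gap.

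\textbf{The topological lemma in Step 2 is false.} You assert that if $U,V\subset N$ are open sets each containing a simple closed curve homotopic to the core of $N$, then $U\cap V$ has a (unique) connected component containing such a curve. This is not true. In the strip picture: let $\wt U$ and $\wt V$ be thin $T$-invariant neighborhoods of the graphs $y=\tfrac12\sin(2\pi x)$ and $y=-\tfrac12\sin(2\pi x)$ respectively. Each is connected, $T$-invariant, and separates the two boundary lines of $P$; yet $\wt U\cap\wt V$ is a $T$-orbit of small disjoint pieces near $x\in\tfrac12\Z$, none of which is $T$-invariant or separates the strip. Downstairs, $U$ and $V$ are thin annular collars of two crossing core curves, and $U\cap V$ has no annular component homotopic to the core. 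The flaw in your sketch is the claim that the lift of $U$ ``contains a $T$-periodic simple arc joining the two boundary lines of $P$''---the lift of a core curve is a horizontal, boundary-parallel curve, not a cross-cut, and separation of the strip by $\wt U$ does not persist to $\wt U\cap\wt V$.

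\textbf{What the paper uses instead.} The hypothesis that $\rho(\E(t_\gamma),A)$ is an odd integer is essential for existence of $A_1$, and your Step 2 never uses it. The paper takes a small hyperbolic neighborhood $B_\epsilon$ of $M\setminus M_{\Se(i,j)}(\gamma)$ chosen so that $B_\epsilon$ contains no curve homotopic to $\gamma$ (possible because no end of $M_{\Se(i,j)}(\gamma)$ is homotopic to $\gamma$), then shows (a) no component of $A\cap B_\epsilon$ can \emph{separate} the frontier components of $A$ (else $B_\epsilon$ would contain a curve homotopic to $\gamma$), and (b) no component of $A\cap B_\delta$ can \emph{connect} them: a connecting Jordan arc $l\subset\overline A\cap B_\delta$ would satisfy $|\iota([l],[\E(t_\gamma)^{10}(l)])|\ge 2$ by Proposition~\ref{intersection-number} (here oddness enters), so $l\cup\E(t_\gamma)^{10}(l)\subset B_\epsilon$ would contain a curve homotopic to $\gamma$, a contradiction. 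Only with both (a) and (b) does one get an annular component $A_1$ of $A\cap M_{\Se(i,j)}(\gamma)$; uniqueness then follows from connectedness of $M_{\Se(i,j)}(\gamma)$. Your counterexample above is exactly the configuration that (b) forbids when the twisting is nontrivial.

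\textbf{Step 3 is also needlessly roundabout.} Rather than decomposing $A$ into collars and invoking Lemma~\ref{sum-twist-number} plus Lemma~\ref{long-range}, the paper simply observes that, because $\ph_0(A)\setminus(M\setminus M_{\Se(i,j)}(\gamma))$ and $\ph_1(A)\setminus(M\setminus M_{\Se(i,j)}(\gamma))$ are nonempty, there is a Jordan arc $l\subset A_1$ connecting the two frontier components of $A$; then the arcs $l$ and $\E(t_\gamma)^n(l)$ lie in both $A_1$ and $A$, and Proposition~\ref{intersection-number} gives $\rho(\E(t_\gamma),A_1)=\rho(\E(t_\gamma),A)$ directly. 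Your collar argument would also require justifying that the collars are faithful and that the long-range Lipschitz hypothesis holds there, none of which is straightforward and none of which is needed.
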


\begin{proof} We first study the set $A \cap (M \setminus M_{\Se(i,j)}(\gamma))$.
Since $ M_{\Se(i,j)}(\gamma)$ contains a curve homotopic to $\gamma$ and $\gamma$ is not homotopic to any end of $ M_{\Se(i,j)}(\gamma)$,
we conclude that the set $M \setminus M_{\Se(i,j)}(\gamma)$ does not contain a curve homotopic
to $\gamma$.  For $\epsilon>0$ let $B_{\epsilon}$ denote an $\epsilon$-neighbourhood (with respect to the hyperbolic metric on $M$ that we fixed above)
of the set $M \setminus M_{\Se(i,j)}(\gamma)$. Then we may choose $\epsilon$ small enough so that the set $B_{\epsilon}$ does not contain a curve homotopic
to $\gamma$. Fix  such $\epsilon>0$. Then no connected component of the set $A \cap B_{\epsilon}$ can separate the two frontier components  of $A$.
Now we show that no connected component of  the set $A \cap B_{\epsilon}$ can connect the two frontier components of $A$.
\vskip .1 cm
Let $0<\delta < \epsilon$ be such that
$$
\dis((\E(t_{\gamma}))^{10}(x),(\E (t_{\gamma}))^{10}(y))<\epsilon,
$$
\noindent
whenever $x,y\in M$ and $\dis(x,y)<\delta$.
This implies $(\E(t_{\gamma}) )^{10}(B_{\delta}) \subset B_{\epsilon}$.
Assume that there exists a connected component of the set $A \cap B_{\delta}$ that connects the two frontier components of $A$.
Then there exists a Jordan arc $l \subset \overline{A} \cap B_{\delta}$ that connects the two boundary components $\ph_0(A)$ and $\ph_1(A)$ of $A$.
Since $\E (t_{\gamma})$ has an odd rotation number and it fixes
$A$, it follows from (\ref{algebraic-intersection})   that $|\iota([l],[\E(t_{\gamma})^{10}(l)])| \ge 2$.
This implies that $\big( (\E (t_{\gamma}))^{10}(l) \cup l \big)$ separates the two frontier components of $A$.
This is a contradiction since $\big( (\E (t_{\gamma}))^{10}(l) \cup l \big)$ is contained in $B_{\epsilon}$.
Therefore no connected component of the set $A \cap B_{\epsilon}$ can connect the two ends of $A$.
\vskip .1cm
We have that no component of the set $A \cap (M \setminus M_{\Se(i,j)}(\gamma))$ can separate or connect the two frontier components of $A$. Also the sets
$$
\ph_0(A) \setminus \big( M \setminus M_{\Se(i,j)}(\gamma) \big), \quad \text{and} \quad  \ph_1(A) \setminus \big( M \setminus M_{\Se(i,j)}(\gamma) \big),
$$
\noindent
are non-empty and relatively open. Combining this with the fact that $M_{\Se(i,j)}(\gamma)$ is connected we conclude that the set $A \cap M_{\Se(i,j)}(\gamma)$ contains a  component that is an annulus homotopic to $\gamma$.
Since $M_{\Se(i,j)}(\gamma)$ is connected such annulus is unique. Denote this annulus by $A_1$. We show that $A_1$ satisfies the same properties as $A$.
\vskip .1cm
We have already seen that $A_1$ is homotopic to $\gamma$.
Since $\E(t_{\gamma})$ setwise preserves $M_{\Se(i,j)}(\gamma)$ we conclude that $\E(t_{\gamma})$ setwise preserves $A_1$.
Since the sets $\ph_0(A) \setminus (M \setminus M_{\Se(i,j)}(\gamma))$ and $\ph_1(A) \setminus (M \setminus M_{\Se(i,j)}(\gamma))$
are non-empty there exists a Jordan arc $l \subset A_1$ that connects the two frontier components components of $A$. It follows from Proposition \ref{intersection-number}  that $\rho(\E(t_{\gamma}),A)=\rho(\E(t_{\gamma}),A_1)$.
\end{proof}

\subsection{The minimal annulus}
We have
\begin{definition} Let $A_{\chara}$ be the characteristic annulus defined above. Let $A$ be a topological annulus on $M$ homotopic to
$\gamma$. The annulus $A$ is said to be an $admissible$ $annulus$
if it satisfies the following
\begin{enumerate}
\item $A \subset A_{\chara}$.
\item $\E (e)(A)=A$
\item $\E (t_{\gamma})(A)=A$
\item The twist number $\rho (\E (t_{\gamma}),A)$ is an odd integer.
\item $A \subset M_{\Se(i,j)}(\gamma)$ for any pair $i,j \in \{1,2\}$.
\end{enumerate}
A topological annulus $A$ is said to be a $minimal$ $annulus$ if it is admissible and if no other
admissible annulus is strictly contained in $A$.
\end{definition}

\begin{proposition}\label{minimal-3} There exists an admissible annulus.
\end{proposition}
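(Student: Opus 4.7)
The plan is to start with the characteristic annulus $A_{\chara}$ itself, which by (\ref{A-0}) and \cite[Lemma 5.1]{ma} already satisfies conditions (1)--(4): it is $\E(t_{\gamma})$- and $\E(e)$-invariant, is homotopic to $\gamma$, and $\Phi^{-1}\circ \E(t_{\gamma})\circ \Phi$ is homotopic to the standard twist modulo its boundary fixed points, so $\rho(\E(t_{\gamma}),A_{\chara})=1$ is odd. Only condition (5) can fail, so the task is to carve out a subannulus that lies in $M_{\Se(i,j)}(\gamma)$ for all four pairs $(i,j)$ while preserving the remaining conditions -- crucially the symmetry $\E(e)(A)=A$. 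The tool for this is Proposition \ref{minimal-2}, applied four times in succession.

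First I would apply Proposition \ref{minimal-2} to $A_{\chara}$ with $(i,j)=(1,1)$ to obtain the unique annular component $A_1$ of $A_{\chara}\cap M_{\Se(1,1)}(\gamma)$ homotopic to $\gamma$, and then apply it again to $A_1$ with $(2,2)$ to get $A_2$. The $\E(e)$-invariance of $A_1$ and $A_2$ comes for free: since the involution $e$ swaps $\ac_i$ with $\bc_i$, conjugation by $\E(e)$ preserves each group $\Gamma(\ac_i,\bc_i)$, so $\E(e)$ preserves the minimal decompositions $\Se(1,1)$ and $\Se(2,2)$, and by uniqueness in Lemma \ref{minimal-1} it fixes $M_{\Se(1,1)}(\gamma)$ and $M_{\Se(2,2)}(\gamma)$ setwise; uniqueness in Proposition \ref{minimal-2} then upgrades this to $\E(e)(A_i)=A_i$, $i=1,2$.

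The delicate step is the pairs $(1,2)$ and $(2,1)$, because now $\E(e)$ conjugates $\Gamma(\ac_1,\bc_2)$ to $\Gamma(\ac_2,\bc_1)$, thereby swapping $M_{\Se(1,2)}(\gamma)$ with $M_{\Se(2,1)}(\gamma)$. I would apply Proposition \ref{minimal-2} to $A_2$ with $(1,2)$ to get $A_3$; then $A_3^{*}:=\E(e)(A_3)$ is automatically the unique annular component of $A_2\cap M_{\Se(2,1)}(\gamma)$ homotopic to $\gamma$. A fourth application of Proposition \ref{minimal-2} to $A_3$ with $(2,1)$ produces $A_4\subset A_3\cap M_{\Se(2,1)}(\gamma)$, which by construction lies in all four $M_{\Se(i,j)}(\gamma)$, is contained in $A_{\chara}$, is $\E(t_{\gamma})$-invariant, and has odd twist number.

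The main obstacle will be verifying that this final $A_4$ remains $\E(e)$-invariant, since the construction of $A_3$ broke the symmetry. Here I would argue as follows: because $A_4$ is an annulus homotopic to $\gamma$ it contains a curve $\delta$ homotopic to $\gamma$, and since $\delta\subset A_2\cap M_{\Se(2,1)}(\gamma)$ lies in the unique annular component $A_3^{*}$, connectedness of $A_4$ forces $A_4\subset A_3^{*}$, hence $A_4\subset A_3\cap A_3^{*}$. A short maximality argument shows that $A_4$ is in fact a connected component of the $\E(e)$-invariant open set $A_3\cap A_3^{*}$, and is the unique component of $A_3\cap A_3^{*}$ that is an annulus homotopic to $\gamma$ (any such component would contain a curve homotopic to $\gamma$, which by the uniqueness in Proposition \ref{minimal-2} applied to $A_3$ with $(2,1)$ must lie in $A_4$). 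Since $\E(e)(A_4)$ is likewise an annular component of $A_3\cap A_3^{*}$ homotopic to $\gamma$, uniqueness gives $\E(e)(A_4)=A_4$, and $A_4$ is the desired admissible annulus.
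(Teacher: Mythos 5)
Your proof is correct, and it is essentially a reorganization of the paper's argument rather than a fundamentally different one: both proceed by four successive applications of Proposition~\ref{minimal-2} starting from $A_{\chara}$, and both must deal separately with the symmetry condition $\E(e)(A)=A$. Where you differ is in \emph{how} that symmetry is recovered. The paper works with the intersection $B=A_{\chara}\cap\bigcap_{i,j}M_{\Se(i,j)}(\gamma)$, which is globally $\E(e)$-invariant because the involution permutes the four subsurfaces $M_{\Se(i,j)}(\gamma)$ among themselves; it shows the iterated annulus $A$ is a connected component of $B$, sets $A_1=\E(e)(A)$, notes that $A_1$ is also a component of $B$, and rules out $A\cap A_1=\emptyset$ by filling in the annulus $A'=A\cup B'\cup A_1$ between them (using essentiality of the ends of $M_{\Se(i,j)}(\gamma)$ from Lemma~\ref{minimal-1} to show $A'\subset B$, contradicting maximality of $A$). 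You instead order the pairs as $(1,1),(2,2),(1,2),(2,1)$ and carry $\E(e)$-invariance through the iteration: free at the diagonal stages since $\E(e)$ normalizes $\Gamma(\ac_i,\bc_i)$, and at the off-diagonal stages via the intersection $A_3\cap A_3^*$. Both arguments are sound; the paper's single global step is a bit shorter, while yours makes the role of the $(i,j)\leftrightarrow(j,i)$ symmetry more transparent.

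One small point worth making explicit: when you assert that $\delta\subset A_2\cap M_{\Se(2,1)}(\gamma)$ forces $\delta\subset A_3^*$, you are using slightly more than the literal statement of Proposition~\ref{minimal-2} (uniqueness of the \emph{annular} component). What you actually need is that $A_3^*$ is the only component of $A_2\cap M_{\Se(2,1)}(\gamma)$ containing a curve homotopic to $\gamma$. This does hold — in the proof of Proposition~\ref{minimal-2} it is shown that no component of $A\cap(M\setminus M_{\Se(i,j)}(\gamma))$ separates the two frontier components of $A$, so two disjoint components of $A_2\cap M_{\Se(2,1)}(\gamma)$ each containing a curve homotopic to $\gamma$ would be separated by exactly such a component — but since you invoke it twice (here, and again in the maximality argument for $A_3\cap A_3^*$), it deserves a sentence of justification.
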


\begin{remark} In fact we have that  $\rho(\E(t_{\gamma}),A)=1$ for the admissible annulus we construct in the proof below..
\end{remark}

\begin{proof} Consider the set

$$
B=A_{\chara} \bigcap \left(\bigcap_{i,j \in \{1,2\}} M_{\Se(i,j)}(\gamma) \right).
$$
\noindent
By Proposition \ref{minimal-2} there exists a unique connected component $A(1,1)$ of $A_{\chara} \cap M_{\Se(1,1)}(\gamma)$ that is an annulus homotopic to $\gamma$. Moreover $\E(t_{\gamma})$ setwise preserves this annulus and $\rho(\E(t_{\gamma}),A(1,1))=\rho(\E(t_{\gamma}),A_{\chara})=1$. That is the annulus $A(1,1)$ satisfies the assumptions of Proposition 4.1. We apply this proposition again and find that
$A(1,1) \cap M_{\Se(1,2)}(\gamma)$ contains a  unique component $A(1,2)$ that is an annulus homotopic to $\gamma$. Again $A(1,2)$ satisfies the assumptions of Proposition \ref{minimal-2}. We repeat this two more times.
We conclude that the set $B$ has a component $A$ that is an annulus homotopic to $\gamma$, such that
$\E (t_{\gamma})(A)=A$, and such that $\rho (\E (t_{\gamma}),A)=1$.
\vskip .1cm
It remains to show that $\E (e)(A)=A$. Set $A_1=\E (e)(A)$. It follows from (\ref{A-0}) (also see Lemma \ref{minimal-1}) that $\E(e)$ setwise preserves the set $B$. We conclude that $A_1$ is also a connected component of the set $B$. In particular we have that either $A=A_1$ or
$A \cap A_1=\emptyset$, and  both annuli $A$ and $A_1$ are homotopic to $\gamma$. We show that $A=A_1$. If $A \cap A_1=\emptyset$ then the set $M \setminus (A \cup A_1)$ contains a unique component $B'$ such that $A \cup A_1 \cup B'=A'$ is an annulus homotopic to $\gamma$. Moreover, the boundary of $A'$ is a subset of $\ph{A} \cup \ph{A_1}$. Since each subsurface $M_{\Se(i,j)}(\gamma)$ contains $A \cup A_1$ and since the ends of  $M_{\Se(i,j)}(\gamma)$ are essential (see Lemma 4.1) we conclude that $A' \subset M_{\Se(i,j)}(\gamma)$ for each $i,j$. This shows that $A=A_1$.
\end{proof}

\begin{proposition}\label{minimal-4} There exists a minimal annulus.
\end{proposition}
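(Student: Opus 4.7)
The plan is to apply Zorn's lemma to the collection $\mathcal A$ of admissible annuli, partially ordered by inclusion. It therefore suffices to show that every totally ordered decreasing chain $\{A_\lambda\}_{\lambda\in\Lambda}$ of admissible annuli admits an admissible lower bound.

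Given such a chain, for each $\lambda$ decompose $A_\chara\setminus\overline{A_\lambda}=L_\lambda\sqcup R_\lambda$, where $L_\lambda$ is the union of the complementary components whose closures meet $\partial_0 A_\chara$ and $R_\lambda$ the union of those whose closures meet $\partial_1 A_\chara$; both are open and grow monotonically with $\lambda$. Set $L=\bigcup_\lambda L_\lambda$ and $R=\bigcup_\lambda R_\lambda$. By the nestedness of the chain, $L\cap R=\emptyset$, the pair $\{L,R\}$ is swapped by $\E(e)$, and each of $L,R$ is setwise preserved by $\E(t_\gamma)$. I would then take $W$ to be the unique connected component of $A_\chara\setminus(\overline L\cup\overline R)$ separating $\partial_0 A_\chara$ from $\partial_1 A_\chara$, its existence being obtained from a Hausdorff-limit argument on core curves of the $A_\lambda$. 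Next, define $A^\ast$ as $A_\chara$ minus the unique component of $A_\chara\setminus W$ whose closure meets $\partial_0 A_\chara$ and minus the unique component whose closure meets $\partial_1 A_\chara$, then pass to the faithful filling of this region by the same absorption trick used in the construction phase of the proof of Proposition \ref{minimal-3}. The canonicity of the construction forces both $\E(t_\gamma)(A^\ast)=A^\ast$ and $\E(e)(A^\ast)=A^\ast$.

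To verify the odd twist number, I would apply Lemma \ref{sum-twist-number} to the decomposition of $A_\chara$ into $A^\ast$ and the two faithful outer fillings; the $\E(e)$-symmetry equates the twist numbers of those two outer pieces, so $\rho(\E(t_\gamma),A^\ast)\equiv\rho(\E(t_\gamma),A_\chara)=1\pmod 2$. The admissibility condition (5) need not survive the limit, so I would finish by successively applying Proposition \ref{minimal-2} once for each of the four pairs $(i,j)\in\{1,2\}^2$ to extract a sub-annulus $A_0\subset A^\ast$ contained in every $M_{\Se(i,j)}(\gamma)$ with unchanged odd twist number. The $\E(e)$-invariance of $A_0$ is then recovered by the symmetrization argument used at the end of the proof of Proposition \ref{minimal-3}: either $A_0=\E(e)(A_0)$, or the smallest annulus of $M$ enveloping $A_0\cup\E(e)(A_0)$ is $\E(e)$-invariant and still lies inside every $M_{\Se(i,j)}(\gamma)$ thanks to the essentiality of ends from Lemma \ref{minimal-1}. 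The resulting $A_0\subset A_\lambda$ for every $\lambda$ furnishes the required lower bound, and Zorn's lemma then produces a minimal admissible annulus.

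The main obstacle is the Hausdorff limit step, where one must show that $W$ is nonempty, that the absorption step promotes it to a genuine topological annulus (rather than a surface with tree-like complementary continua attached), and that $\rho(\E(t_\gamma),A^\ast)$ is actually an integer. The integrality amounts to proving that $\E(t_\gamma)$ has conformal fixed points on both frontier components of $A^\ast$, which I would obtain from Proposition \ref{fixed-point} applied to the $\E(t_\gamma)$-invariant compact pieces of $\partial A^\ast$ separating $A^\ast$ from $L$ and from $R$.
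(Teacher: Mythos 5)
Your outline takes a genuinely different route from the paper, but it contains a gap that is in fact the heart of the matter, and you more or less admit this yourself. The paper does not try to show directly that the region caught between $L$ and $R$ is a nondegenerate annulus; instead it first uses separability of $M$ to replace the maximal chain by a countable decreasing sequence $A_n$, and then establishes a \emph{quantitative} lower bound: among all geodesic discs in $A_n$ whose closures touch both frontier components, the one of smallest radius has radius bounded away from zero uniformly in $n$. The mechanism is precisely the oddness of $\rho(\E(t_\gamma),A_n)$: by Proposition~\ref{intersection-number}, an arc $l_n$ crossing such a disc must have $|\iota([l_n],[(\E(t_\gamma))^{10}(l_n)])|\ge 2$, so $l_n\cup(\E(t_\gamma))^{10}(l_n)$ contains a loop homotopic to $\gamma$, whose hyperbolic diameter is bounded below --- hence so is the radius, by uniform continuity of $(\E(t_\gamma))^{10}$. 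This radius bound gives a lower bound on $\operatorname{Mod}(A_n)$, which is what licenses passing to a conformal limit $\Phi\colon N(s)\to M$ with $s>1$ and taking $A=\Phi(N(s))$ as the lower bound. Your ``Hausdorff-limit argument on core curves'' does not by itself rule out collapse: nothing in the nestedness of the chain or the $\E(e)$-symmetry prevents the moduli of the $A_\lambda$ from tending to zero, in which case $W$ would be empty or a degenerate continuum rather than an annulus. Without some version of the maximal-disc/twist-number estimate, the construction has no content.

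Two further remarks. First, by realizing the limit directly as the image of a conformal map, the paper never has to worry about whether the absorbed region is a bona fide topological annulus, nor about faithfulness of the inner and outer pieces --- issues you flag but do not resolve, and which are exactly the ones that make your application of Lemma~\ref{sum-twist-number} and Proposition~\ref{fixed-point} delicate (you would also need to know each $A_\lambda$ is faithful to conclude $W\subset A_\lambda$, which is not part of admissibility). Second, the paper obtains oddness of the limiting twist number more cheaply: it shows $\rho(\E(t_\gamma),A_n)=\rho(\E(t_\gamma),A)$ for all large $n$ by comparing intersection numbers of a converging sequence of arcs and invoking formula~(\ref{algebraic-intersection}), avoiding the sum-twist-number decomposition entirely. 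Your plan to then re-run Proposition~\ref{minimal-2} four times and re-symmetrize is unnecessary in the paper's setup, since $A\subset\bigcap A_n$ already inherits conditions (1) and (5), and $\E(t_\gamma)$- and $\E(e)$-invariance follow from equivariance of the whole construction.
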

\begin{remark} It can be shown that in fact there exists a unique minimal annulus. We do not need this result so we omit proving it.
\end{remark}
\begin{proof}
Consider the  family $\mathcal{F}$ of all admissible annuli. This family is non-empty by the previous proposition.
The partial ordering on $\mathcal{F}$ is given by the inclusion.
By the Zorn's lemma  there exists a maximal chain $\mathcal{A}$ in
$\mathcal{F}$. We show that the intersection of all annuli in
$\mathcal{A}$ is a set whose interior  contains an annulus that  also belongs to $\mathcal{A}$ (that is we show that
$\mathcal{A}$ has the minimal element). This annulus is then by definition a minimal annulus.
\vskip .1 cm
Since $M$ is a separable space, it follows that there exists a
decreasing sequence of annuli $A_n \in \mathcal{C}$ such that
$$
\bigcap_{n \in \N}A_n=\bigcap_{A \in \mathcal{A}}A.
$$
\noindent
Let $\dis$ denote the corresponding hyperbolic distance on $M$ (recall that we have fixed the complex structure on $M$).
Let $D \subset A_n$ be a geodesic disc (with respect to the hyperbolic metric). We say that $D$ is a proper maximal disc if the closed disc $\overline{D}$ has non-empty intersection with both frontier components of $A_n$.

\begin{remark} If $\Omega \subset M$ is a domain we say that $D \subset \Omega$ is a maximal disc in $\Omega$ if $D$ is a geodesic disc  that is not contained in any larger hyperbolic disc which is a subset of $\Omega$. Then the closed disc $\overline{D}$ has to touch the boundary of $\Omega$. If $\Omega$ is a topological annulus then a closed  maximal disc does not need to connect the two frontier components of $A_n$. This is why we call such discs proper maximal discs.
\end{remark}

If $z \in \ph{A}_n$ is a point where $\overline{D}$ touches the boundary $\ph{A}_n$ then $z$ is an accessible point, and the geodesic arc that connects the centre of $D$ with $z$ is contained in $A$. Let $D_n$ be a proper maximal disc that has the smallest radius among all proper maximal discs in $A_n$ (there could be more than one such disc with the smallest radius and we pick one). Let $r_n$ denote the radius of $D_n$ and let $c_n$ be its centre. We show that
$\liminf\limits_{n \to \infty} r_n>0$.
\vskip .1cm
Since $D_n$ is a proper maximal disc there exist points $z_n \in \ph_0(A_n)$ and $w_n \in \ph_1(A_n)$ that are in the closed disc $\overline{D}_n$.
Let $l_n \subset D_n \subset A_n$ be the arc that has the endpoints $z_n$ and $w_n$, such that $l_n$ is the union of the two geodesic arcs
that connect $z_n$ and $w_n$ with $c_n$ respectively.
Let $l'_n=(\E (t_{\gamma}))^{10} (l_n)$. Since $\rho (\E(t_{\gamma}), A_n)$ is odd, it follows from Proposition \ref{intersection-number} (and formula (\ref{algebraic-intersection}) ) that $|\iota([l_n],[l'_n])| \ge 2$. This implies that the set $l_n \cup l'_n$ contains a closed curve homotopic to $\gamma$. Denote this curve by $t_n$. For every $n \in \N$ the hyperbolic diameter of $t_n$
is bounded below by the half of the hyperbolic length of the simple closed geodesic homotopic to $\gamma$.
\vskip .1cm
Assume that after passing onto a subsequence if necessary, we have $r_n \to 0$, $n \to \infty$. Then the
hyperbolic diameter of the arc $l_n$ tends to zero. Since $\E(t_{\gamma})^{10}$ is uniformly continuous on $M$ we see that the hyperbolic diameter of $l'_n$ tends to zero as well. Since $t_n \subset (l_n \cup l'_n)$ we conclude that the hyperbolic diameter of $t_n$ tends to zero. But this is a contradiction with the fact that the the hyperbolic diameter of $t_n$ is bounded away from zero.
\vskip .1cm
Let $1<s_n$ so that
$$
\Mod(A_n)={{\log s_n}\over {2\pi}}.
$$
\noindent
Here $\Mod(A_n)$ denotes the conformal modulus of $A_n$. Then there exists a conformal map $\Phi_n: N(s_n) \to M$ such that $\Phi_n(N(s_n))=A_n$. Since the radius of every proper maximal disc is bounded away from zero (regardless of $n$) we conclude that the distance between the two frontier components of $A_n$ is bounded away from zero (regardless of $n$).
Combining this with the fact that the sequence $A_n$ is decreasing we have that $\lim\limits_{n \to \infty} s_n=s$ exists and $1<s$ (see \cite{l-v}).  This shows that the sequence of conformal maps $\Phi_n$ converges on every compact set in $N(s)$ (after passing onto a subsequence if necessary) to a non-degenerate conformal map $\Phi:N(s) \to M$ (note that every compact set in $N(s)$ is eventually contained in $N(s_n)$ which is the domain of $\Phi_n$). Let $A=\Phi(N(s))$. Then $A$ is a topological annulus that is contained in $\bigcap_{n \in \N}A_n$. Clearly $A$ is setwise preserved by $\E(t_{\gamma})$ and $\E(e)$. It remains to show that the twist number $\rho(\E(t_{\gamma}),A)$ is an odd integer.
\vskip .1cm
Going back to the sequence of proper maximal discs $D_n$ we see that after passing to a subsequence if necessary, we have that $D_n \to D$ where $D$ is a proper maximal discs in $A$. Also the sequence of arcs $l_n$ converges to the corresponding arc $l \subset A$ whose endpoints are in the opposite frontier components of $A$ (recall that each $l_n$ constitutes of the two geodesic arcs and so does $l$). Fix $k \in \N$. Then by the continuity of $(\E(t_{\gamma}))^{k}$, for $n$ large enough we have that
$$
|\iota([l_n],[(\E(t_{\gamma}))^{k}(l_n)])- \iota([l],[(\E(t_{\gamma}))^{k}(l)])| \le 2.
$$
\begin{remark} If $(\E(t_{\gamma}))^{k}$ does not fix either endpoint of $l$ then for $n$ large enough we have that
$\iota([l_n],[(\E(t_{\gamma}))^{k}(l_n)])= \iota([l],[(\E(t_{\gamma}))^{k}(l)])$. But if $(\E(t_{\gamma}))^{k}$ fixes one or both endpoints of $l$ then the two numbers may differ by $2$.
\end{remark}
By the previous inequality and from Proposition \ref{intersection-number} (formula (\ref{algebraic-intersection}) ) we have
$$
|k\rho((\E(t_{\gamma})),A_n)-k\rho((\E(t_{\gamma})),A)| \le 20,
$$
\noindent
for every $k \in \N$ and $n$ large enough. We have
$$
\lim\limits_{n \to \infty} \rho(\E(t_{\gamma} ),A_n)=\rho(\E(t_{\gamma}),A).
$$
\noindent
Since every $\rho(\E(t_{\gamma}),A_n)$ is an odd integer so is $\rho(\E(t_{\gamma}),A)$.

\end{proof}

\subsection{The action of the twist $\E(t_{\gamma})$ on the minimal annulus}
From now on we fix a minimal annulus and call it $A_{\min}$. We may assume that the strip $P$ is the universal cover of $A_{\min}$.
Let $p \in A_{\min}$ and let $S_p(i,j) \in \Se(i,j)$ be the corresponding component that contains $p$ (here $i,j \in \{1,2\}$). Let $\wh{S}_p(i,j)$ denote a single lift of $S_p(i,j)$  to $P$. Since  $S_p(i,j)$ is acyclic we have that every connected component of the set $P \cap \wh{S}_p(i,j)$ has a finite Euclidean diameter (if a relatively closed subset of $P$ has an infinite diameter, and if this set is invariant for the translation for $1$, then the projection of this set to $A_{\min}$ is not acyclic). Let $C$ be the supremum of such Euclidean diameters when $p \in A_{\min}$ and $i,j \in \{1,2\}$. Since  $\Se(i,j)$ is upper semi-continuous we find that this supremum is achieved and we denote it by $C_{\min}$.

\begin{proposition} \label{minimal-5} Let $S \in \Se(i,j)$, $i,j \in \{1,2\}$, and assume that $S$ has a non-empty intersection with  $A_{\min}$. Then $S$ is acyclic and $S$ is compactly contained in $A_{\min}$.
\end{proposition}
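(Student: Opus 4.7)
The first claim is immediate from admissibility. By condition (5) for the admissible annulus $A_{\min}$ we have $A_{\min}\subset M_{\Se(i,j)}(\gamma)\subset M_{\Se(i,j)}$, so every point $p\in A_{\min}$ lies in an acyclic component of $\Se(i,j)$; consequently, if $S\in\Se(i,j)$ meets $A_{\min}$ at some $p$, then $S=S_p(i,j)$ is acyclic. Moreover, since $S$ is then connected and contained in $M_{\Se(i,j)}$, and since $M_{\Se(i,j)}(\gamma)$ is a connected component of $M_{\Se(i,j)}$ meeting $S$ at $p$, we get $S\subset M_{\Se(i,j)}(\gamma)$ as well.

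For the compactness, the plan is a proof by contradiction. Suppose $S\in\Se(i,j)$ meets $A_{\min}$ but is not compactly contained in $A_{\min}$, so that $\overline{S}$ (closure in $M$) meets the topological frontier $\partial A_{\min}$. I will produce a strictly smaller annulus $A'\subsetneq A_{\min}$ satisfying all five conditions for an admissible annulus, contradicting the minimality of $A_{\min}$.

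The construction rests on three structural facts. First, because the curves in $\ac_i\cup\bc_j$ are chosen disjoint from $\gamma$, the homeomorphism $\E(t_{\gamma})$ commutes with every generator of $\Gamma(\ac_i,\bc_j)$, hence respects $\Se(i,j)$ and permutes its components; combined with $\E(t_{\gamma})(A_{\min})=A_{\min}$, the orbit $\bigcup_{n\in\Z}\E(t_{\gamma})^n(S)$ is an $\E(t_{\gamma})$-invariant family of acyclic components each meeting $A_{\min}$. Second, $\E(e)$ conjugates $\Gamma(\ac_i,\bc_j)$ onto $\Gamma(\ac_j,\bc_i)$, so $\E(e)(S)\in\Se(j,i)$ is also acyclic and meets $A_{\min}$, and its $\E(t_{\gamma})$-orbit has the same properties. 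Third, by the discussion preceding the proposition, every connected component in $P$ of a lift of $S_q(k,l)\cap A_{\min}$, for any $q\in A_{\min}$ and any $k,l\in\{1,2\}$, has Euclidean diameter at most $C_{\min}$.

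Using these facts, I would excise from $A_{\min}$ a thin $\E(t_{\gamma})$- and $\E(e)$-invariant open tubular neighborhood of the closed set
$$
C\ =\ \overline{\bigcup_{n\in\Z}\E(t_{\gamma})^n(S)\,\cup\,\bigcup_{n\in\Z}\E(t_{\gamma})^n(\E(e)(S))}\,\cap\,\overline{A_{\min}}.
$$
The uniform diameter bound $C_{\min}$ keeps the excised neighborhood away from at least one of the two frontier components of $A_{\min}$, so what remains contains an open sub-annulus $A'\subsetneq A_{\min}$ still homotopic to $\gamma$ and still setwise preserved by $\E(t_{\gamma})$ and $\E(e)$. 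The main obstacle is then to verify the remaining admissibility conditions for $A'$: that $\rho(\E(t_{\gamma}),A')$ is an odd integer, and that $A'\subset M_{\Se(k,l)}(\gamma)$ for every $(k,l)$. For the first, arranging $A'$ to be faithful and invoking Lemma \ref{sum-twist-number} together with Proposition \ref{fixed-point} (to provide conformal fixed points on the new frontiers) exhibits $A_{\min}$ as a disjoint union of $A'$ and complementary sub-annuli whose twist numbers sum to $\rho(\E(t_{\gamma}),A_{\min})$; the complementary annuli, built around the neighborhood of $C$ and symmetric under $\E(e)$, must carry even twist, forcing $\rho(\E(t_{\gamma}),A')$ to be odd. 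For the second, iterating Proposition \ref{minimal-2} over the four pairs $(k,l)$, exactly as in the proof of Proposition \ref{minimal-3}, yields the desired containment without changing the odd twist number.
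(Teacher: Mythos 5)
Your first paragraph (acyclicity) and the overall strategy --- excise a neighborhood of the bad set to produce a strictly smaller admissible annulus, contradicting minimality --- are exactly the paper's route. But there is a genuine gap at the key step.

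The claim that ``the uniform diameter bound $C_{\min}$ keeps the excised neighborhood away from at least one of the two frontier components of $A_{\min}$'' is false as stated, and the real work of the proof is hiding behind it. Since $\overline{S}$ meets $\ph{A_{\min}}$ and your set $C$ is $\E(e)$-invariant, $C$ necessarily meets \emph{both} frontier components of $A_{\min}$. What you actually need is that $C$ does not \emph{connect} (span) the two frontier components, so that the complement of a thin neighborhood of $C$ still contains a sub-annulus homotopic to $\gamma$. This is not a consequence of the diameter bound alone --- a set of Euclidean diameter $C_{\min}$ can easily cross a thin strip. The paper proves it in three stages, each relying on the twist-number estimate of Proposition \ref{uniform-twist}: (i) no single component $S_p(i,j)$ can connect the two frontiers of $A_{\min}$, since a spanning lift of bounded diameter would give $|\wh{f}^n(z_1)-\wh{f}^n(z_0)|\le C_{\min}$ for all $n$ and hence $\rho(\E(t_{\gamma}),A_{\min})=0$; (ii) the closure of the $\E(t_{\gamma})$-orbit $\overline{X}$ cannot connect the two frontiers either, by upper semi-continuity of $\Se(i,j)$; (iii) crucially, $\overline{X}\cap\E(e)(\overline{X})=\emptyset$, because a common point $p$ would make $S_p(i,j)\cup S_p(j,i)$ a connected closed set of bounded-diameter lifts spanning $P$, again forcing $\rho(\E(t_{\gamma}),A_{\min})=0$. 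Your proposal contains none of these, and without (iii) in particular the construction of $A'$ simply fails --- $C$ could separate $A_{\min}$.

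Two smaller points. Your argument that the complementary ``cuff'' annuli carry even twist (via Proposition \ref{fixed-point}) is more roundabout than necessary and leaves unverified hypotheses (existence of compact $\E(t_{\gamma})$-invariant subsets of the inner frontiers); the paper instead shows directly that the cuffs have twist number \emph{zero}, because the spanning component $S_p(i,j)$ sits inside a cuff with lifts of bounded diameter, so Proposition \ref{uniform-twist} applies again. And the final appeal to Proposition \ref{minimal-2} for condition (5) is superfluous: once $A'\subset A_{\min}$, conditions (1) and (5) for $A'$ are automatic from the admissibility of $A_{\min}$.
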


\begin{proof} It follows from the definition of  $A_{\min}$ that for each point $p \in A_{\min}$, the corresponding component $S_p(i,j) \in \Se(i,j)$ that contains $p$ is acyclic. Since $\E (t_{\gamma})$ commutes with the elements of $\Gamma(\ac_i,\bc_j)$, it follows that
$\E (t_{\gamma})$ permutes the components of the minimal decompositions $\Se(i,j)$, that is $\E (t_{\gamma})(S_p(i,j))=S_{\E (t_{\gamma})(p)}(i,j)$.
First we show that $S_p(i,j)$ cannot connect the two frontier components of $A_{\min}$.
\vskip .1cm
Assume on the contrary that $S_p(i,j)$ connects the two frontier  components of $A_{\min}$. Let $S$ be a connected component of $S_p(i,j) \cap A_{\min}$
such that $\overline{S}$ connects the two frontier components. Let $\wh{S}$ be a lift of $S$ to $P$. Then the closure $\overline{\wh{S}}$ of $\wh{S}$ connects the  two boundary lines of $P$. Let $z_i \in \ph_i(P) \cap \overline{\wh{S}}$, $i=0,1$. Let $\wh{f}$ be a lift of $\E(t_{\gamma})$ to $P$.
Then $|\wh{f}^{n}(z_1)-\wh{f}^{n}(z_2)| \le C_{\min}$, for every $n \in \Z$. By Proposition \ref{uniform-twist} we have that $\rho(\E(t_{\gamma}),A_{\min})=0$.
\vskip .1cm
Next we show that $S_p(i,j)$  cannot intersect the boundary of
$A_{\min}$.  Assume on the contrary that $S_p(i,j)$  intersects the frontier component $\ph_0(A_{\min})$.
We already showed that $S_p(i,j)$  cannot connect the two frontier components of $A_{\min}$.
Since $\E (t_{\gamma})$ preserves each frontier component of $A_{\min}$, it follows that the set
$$
X=\bigcup_{k\in\mathbb{Z}}\E (t_{\gamma})^k(S_p(i,j) \bigcap\overline{A_{\min}}),
$$
\noindent
intersects only  $\ph_0(A_{\min})$. Moreover the closure $\overline{X}$ cannot
connect the two frontier components of $A_{\min}$ either. If we assume that $\overline{X}$
connects the two frontier components of $A_{\min}$ then by the upper
semi-continuity of the decomposition  $\Se(i,j)$ there exists a single component of  $\Se(i,j)$ connecting the two frontier components of $A_{\min}$, which is a contradiction.
\vskip .1 cm
We also claim that $\overline{X} \cap \E (e)(\overline{X})=\emptyset$. Note that $\E(e)(\Se(i,j))=\Se(j,i)$.
Assume  that $\overline{X} \cap \E (e)(\overline{X})\neq \emptyset$ and let $p \in \overline{X} \cap \E (e)(\overline{X})$.
Then $S_p(i,j) \cup S_p(j,i)$ is a closed set that connects the two frontier components of $A_{\min}$. Let $S_1$ and $S_2$ be connected components of
the sets $S_p(i,j)\cap A_{\min}$ and $S_p(j,i)\cap A_{\min}$ respectively, such that $S_1 \cup S_2$ connects the two frontier components of $A_{\min}$.
Let $\wh{S}_1$ and $\wh{S}_2$ be the corresponding single lifts to $P$ such that $\wh{S}_1 \cup \wh{S}_2$ connects the two boundary lines of $P$.
Let $z_0 \in \ph_0(P) \cap \overline{(\wh{S}_1\cup \wh{S}_2)}$ and $z_1 \in \ph_1(P) \cap \overline{(\wh{S}_1\cup \wh{S}_2)}$. Let $\wh{f}$ be a lift of $\E(t_{\gamma})$ to $P$. Then the Euclidean diameter of the set $\wh{f}^{n}(\wh{S}_1 \cup \wh{S}_2)$ is less than $2C_{\min}$, where $C_{\min}$ is the constant defined above. This shows that $|\wh{f}^{n}(z_1)-\wh{f}^{n}(z_0)| \le 2C_{\min}$, for every $n \in \Z$. But this contradicts Proposition \ref {uniform-twist} since $\rho(\E(t_{\gamma}),A_{\min}) \ne 0$.
\vskip .1 cm
Since  $\overline{X} \cap \E (e)(\overline{X})=\emptyset$ we have  that the set $A_{\min}\setminus (\overline{X} \cup\E (e)(\overline{X} ) )$ contains a unique component $A$ that is a topological annulus which is invariant under both $\E(t_{\gamma})$ and $\E (e)$. We will show that $A$ is
an admissible annulus and will contradict that $A_{\min}$ is a minimal annulus.
\vskip .1cm
To show that $A$ is admissible it remains to show that the rotation number of $\E (t_{\gamma})$ on $A$ is an odd integer. There are two cases to consider. The first one is when $\ph{A} \cap \ph{A}_{\min} \ne \emptyset$. Since $A=\E(e)(A)$ we have that there are points $z_i \in \ph{A} \cap \ph_i(A_{\min})$. If $z_i$ are accessible points (with respect to $A$) we can choose an arc $l \subset A$ that connects the two points. Then by
Proposition \ref {intersection-number} we have that $\rho(\E(t_{\gamma}),A_{\min})=\rho(\E(t_{\gamma}),A)$. If $z_i$ is not accessible we can find accessible points that are arbitrary close to $z_i$ and then the argument goes the same way as in the proof of  Lemma \ref{sum-twist-number}. This shows that  $\rho(\E(t_{\gamma}),A_{\min})=\rho(\E(t_{\gamma}),A)$.
\vskip .1cm
If  $\ph{A} \cap \ph{A}_{\min} =\emptyset$ then there are two mutually disjoint annuli $A_i \subset A_{\min}$, $i=0,1$  (and disjoint from $A$) such that
$\ph_i(A_i)=\ph_i(A_{\min})$. Moreover we have that $S_p(i,j)$ connects the two frontier components of $A_0$ and $S_p(j,i)$ connects the two frontier components of $A_1$. Same as above we show that $\rho(\E(t_{\gamma}),A_0)=\rho(\E(t_{\gamma}),A_1)=0$. From Lemma \ref{sum-twist-number} we have  $\rho(\E(t_{\gamma}),A_{\min})=\rho(\E(t_{\gamma}),A)+\rho(\E(t_{\gamma}),A_0)+\rho(\E(t_{\gamma}),A_1)=\rho(\E(t_{\gamma}),A)$. This proves that $A$ is admissible. Therefore $S_p(i,j)$ does not intersect the boundary of $A_{\min}$ and since  $S_p(i,j)$ is a closed set this proves that it is compactly contained in $A_{\min}$.
\end{proof}

\begin{proposition}\label{uniform-diam} Let $p \in A_{\min}$ and $i,j \in \{1,2\}$. Then a single lift of  $S_p(i,j)$ to $P$ has the Euclidean diameter at most $C_{\min}$ ($C_{\min}$ is the constant defined above).
\end{proposition}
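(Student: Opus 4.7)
The plan is to apply Proposition \ref{minimal-5} and combine it with a covering space observation so that a single lift of $S_p(i,j)$ to $P$ coincides with a single connected component of $P\cap \wh S_p(i,j)$; the bound by $C_{\min}$ is then immediate from the definition.

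First, I would observe that the core of $A_{\min}$ is homotopic to $\gamma$, a non-trivial element of $\pi_1(M)$, so the inclusion $A_{\min}\hookrightarrow M$ is injective on fundamental groups. Consequently the preimage of $A_{\min}$ in the universal cover $\wt M$ of $M$ decomposes as a disjoint union of copies of the universal cover of $A_{\min}$, and we identify one such copy with the strip $P\subset \wt M$.

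Next, by Proposition \ref{minimal-5}, $S_p(i,j)$ is compactly contained in $A_{\min}$; since it is also connected, any single lift of $S_p(i,j)$ to $\wt M$ lies entirely within exactly one of the above copies of the universal cover of $A_{\min}$. After applying a deck transformation of $\wt M\to M$ if necessary, we may take this single lift $\wh S_p(i,j)$ to lie inside $P$. Then $P\cap \wh S_p(i,j) = \wh S_p(i,j)$ is itself a single connected component of $P\cap \wh S_p(i,j)$, and the definition of $C_{\min}$ as the supremum of the Euclidean diameters of such components (taken over all $p\in A_{\min}$ and $i,j\in\{1,2\}$) directly yields that the Euclidean diameter of $\wh S_p(i,j)$ is at most $C_{\min}$.

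The main (and essentially only) obstacle is verifying that a single lift does not break up into several pieces when restricted to $P$, a possibility left open by the a priori definition of $C_{\min}$. This is precisely what Proposition \ref{minimal-5} rules out: the compact containment of $S_p(i,j)$ in $A_{\min}$, combined with the $\pi_1$-injectivity of $A_{\min}\hookrightarrow M$, forces the whole lift into a single strip, so the diameter bound applies to the entire lift rather than only to one of its connected components.
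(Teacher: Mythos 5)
Your proposal is correct and takes the same route as the paper, which simply observes that the result follows from the definition of $C_{\min}$ together with Proposition \ref{minimal-5}. You have usefully filled in the implicit step — that the compact containment of $S_p(i,j)$ in $A_{\min}$, combined with $\pi_1$-injectivity of $A_{\min}\hookrightarrow M$ (equivalently, that $S_p(i,j)$ is acyclic and hence evenly covered by $P\to A_{\min}$), ensures a single lift is connected and sits entirely inside one copy of $P$ — but this is precisely the unwritten content of the paper's one-line proof.
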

\begin{proof} This follows directly from the definition of $C_{\min}$ and the previous proposition.

\end{proof}

\begin{proposition}\label{minimal-idle} Let $K>0$. Then every connected component of the set $\Idle(K,\E(t_{\gamma}),A_{\min})$ is compactly contained in $A_{\min}$
and it does not separate the two frontier components of $A_{\min}$.
\end{proposition}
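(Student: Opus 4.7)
The proof is by contradiction against the minimality of $A_{\min}$; throughout let $Q$ be a connected component of $\Idle(K,\E(t_\gamma),A_{\min})$. Since $\rho(\E(t_\gamma),A_{\min})$ is an odd, in particular nonzero, integer, Proposition \ref{idle-1} already shows that the closure of $Q$ in $\overline{A_{\min}}$ cannot meet both frontier components $\ph_0(A_{\min})$ and $\ph_1(A_{\min})$. It therefore suffices to rule out two bad cases: (i) $\overline{Q}$ meets $\ph A_{\min}$ (so $Q$ is not compactly contained), and (ii) $Q$ is compactly contained and separates the two frontier components.

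Consider case (i). By the preceding observation $\overline{Q}$ meets only one frontier, say $\ph_0(A_{\min})$. Set $X=\bigcup_{k\in\Z}\E(t_\gamma)^k(Q)\subset\Idle(K,\E(t_\gamma),A_{\min})$, so $\overline{X}$ still touches only $\ph_0(A_{\min})$ and $\E(e)(\overline{X})$ only $\ph_1(A_{\min})$. If $\overline{X}\cap\E(e)(\overline{X})\neq\emptyset$, pick a lift $\hat p\in P$ of a common point and trace the connected closures of lifts through $\hat p$ to points $z_0\in\ph_0(P)$ and $z_1\in\ph_1(P)$; combining the $K$-idle bounds on each closure (inherited by relative closedness of $\Idle$) with the fact that every lift of $\E(e)$ is a Euclidean isometry shifting the chosen lift $\wh f$ of $\E(t_\gamma)$ only by an integer translation via $\wh f\circ\wh e=T^m\circ\wh e\circ\wh f$, I obtain a uniform bound on $|Re(\wh f^n(z_1))-Re(\wh f^n(z_0))|$ for all $n\in\Z$, which by Proposition \ref{uniform-twist} forces $\rho(\E(t_\gamma),A_{\min})=0$, a contradiction.

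If instead $\overline{X}\cap\E(e)(\overline{X})=\emptyset$, the set $A_{\min}\setminus(\overline{X}\cup\E(e)(\overline{X}))$ has a unique connected component $A'$ homotopic to $\gamma$ with $A'\subsetneq A_{\min}$ and $\E(t_\gamma)(A')=\E(e)(A')=A'$. Replacing the annulus $A_0$ adjacent to $\ph_0(A_{\min})$ containing $\overline{X}$, and $A_1=\E(e)(A_0)$, by their faithful envelopes (Definition \ref{def-faithful}), the accessible idle points on $\ph_0(A_{\min})$ and on the frontier of $A_0$ inside $\overline{X}$ force $\rho(\E(t_\gamma),A_0)=\rho(\E(t_\gamma),A_1)=0$ via Proposition \ref{uniform-twist-1}; Lemma \ref{sum-twist-number} then gives $\rho(\E(t_\gamma),A')=\rho(\E(t_\gamma),A_{\min})$, an odd integer. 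Iterating Proposition \ref{minimal-2} four times over $(i,j)\in\{1,2\}^2$, and using that $\E(e)$ permutes $\Se(i,j)\leftrightarrow\Se(j,i)$, exactly as in the proof of Proposition \ref{minimal-3}, produces an admissible annulus strictly inside $A_{\min}$, contradicting minimality. Case (ii) is handled even more directly: Proposition \ref{idle-2} yields an annulus $A_1\subset A_{\min}$ with $A_1\neq A_{\min}$, $\E(e)(A_1)=\E(t_\gamma)(A_1)=A_1$ and $\rho(\E(t_\gamma),A_1)$ odd, after which the same iteration of Proposition \ref{minimal-2} again produces an admissible annulus strictly inside $A_{\min}$, contradicting minimality.

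The main technical obstacle is the second sub-case of (i): verifying $\rho(\E(t_\gamma),A_0)=0$ requires $\overline{X}\cap\ph A_0$ to contain enough accessible idle points for Proposition \ref{uniform-twist-1} to apply, and the faithful replacements of $A_0$, $A'$, $A_1$ must be coordinated so that Lemma \ref{sum-twist-number} remains usable. A secondary delicate step is the first sub-case, where two $K$-idle bounds on different components of $\Idle$ must be combined through an intermediate point while carefully tracking the integer shift introduced by the lift of $\E(e)$.
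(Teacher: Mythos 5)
Your argument is structurally the same as the paper's: the paper proves the two claims by (a) repeating the argument of Proposition~\ref{minimal-5} for the non-compactly-contained case, treating $\overline{Q}$ exactly as it treated the component $S_p(i,j)$, and (b) invoking Proposition~\ref{idle-2} together with the minimality of $A_{\min}$ for the separating case; you do both, filling in the details that the paper delegates by reference. So the skeleton matches.

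One remark, though, on a step you include that is superfluous and slightly obscures the punchline. At the end of both sub-cases of (i), and again in (ii), you iterate Proposition~\ref{minimal-2} four times ``as in the proof of Proposition~\ref{minimal-3}'' to manufacture an admissible annulus strictly inside $A_{\min}$. This is unnecessary: the annulus $A'$ you construct in case (i), and the annulus $A_1$ handed to you by Proposition~\ref{idle-2} in case (ii), are already admissible as they stand. They are topological annuli homotopic to $\gamma$, invariant under $\E(t_\gamma)$ and $\E(e)$, with odd twist number, and since they are subsets of $A_{\min}$ they automatically inherit conditions (1) and (5) of the definition (containment in $A_{\chara}$ and in each $M_{\Se(i,j)}(\gamma)$) from $A_{\min}$. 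Running Proposition~\ref{minimal-2} again on an annulus that already lies in every $M_{\Se(i,j)}(\gamma)$ simply returns the same annulus, so the iteration buys nothing; the contradiction with minimality is immediate once $A'$ (resp.\ $A_1$) is produced. This is exactly how the paper's proof of Proposition~\ref{minimal-5} closes --- it declares the constructed annulus admissible and stops. Trimming that loop would make your write-up cleaner, but the proof as stated is not incorrect, only longer than it needs to be.
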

\begin{proof} The proof of the statement that every connected componet of the set $\Idle(K,\E(t_{\gamma}),A_{\min})$ is compactly contained in $A_{\min}$ is the same as the proof of Proposition \ref{minimal-5} (in fact we have already proved in Proposition \ref{idle-1} that a connected component of
$\Idle(K,\E(t_{\gamma}),A_{\min})$ can not connect the two frontier components of $A_{\min}$). If $Q$ is a connected component of $\Idle(K,\E(t_{\gamma}),A_{\min})$ then we treat the set $\overline{Q} \subset \overline{A}_{\min}$ in the same way as the component $S_p(i,j)$ in the above proof.
\vskip .1cm
Since $A_{\min}$ is a minimal annulus (and as such it does contain another admissible annulus) we conclude from Proposition \ref{idle-2} that a connected component of  $\Idle(K,\E(t_{\gamma}),A_{\min})$  can not separate the two frontier components of $A_{\min}$.
\end{proof}

\begin{proposition}\label{minimal-connected} Let $K>0$. Then  there exists a non-empty connected component $D_{\min}$ of the set $A_{\min} \setminus \Idle(K,\E(t_{\gamma}),A_{\min})$ such that  every connected component of the set
$A_{\min} \setminus D_{\min}$ is compactly contained in $A_{\min}$.
\end{proposition}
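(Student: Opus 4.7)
Set $I := \Idle(K, \E(t_{\gamma}), A_{\min})$. By Proposition \ref{minimal-idle}, every connected component of $I$ is compactly contained in $A_{\min}$ and does not separate the two frontier components of $A_{\min}$. The plan is to take $D_{\min}$ to be the connected component of $A_{\min}\setminus I$ containing an arc that accumulates on both frontier components, and then to verify that every other connected component of $A_{\min}\setminus D_{\min}$ is compactly contained in $A_{\min}$.

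The main tool will be the classical decomposition theorem from plane topology (Kuratowski, \emph{Topology}, vol.~II, \S57): if a closed subset of a simply connected planar region separates two points, then some connected component does. Working in the universal cover $P$ of $A_{\min}$, each connected component of the lift $\wh I$ is a connected lift of a component of $I$, hence compactly contained in $P$ and, by Proposition \ref{minimal-idle}, non-separating for $\ph_0(P)$ and $\ph_1(P)$. By the decomposition theorem, $\wh I$ itself does not separate the two boundary lines of $P$, so there is a simple arc $\wh\sigma \subset P\setminus \wh I$ accumulating on both boundary lines. Its projection $\sigma \subset A_{\min}\setminus I$ accumulates on both frontier components of $A_{\min}$; I let $D_{\min}$ be the connected component of $A_{\min}\setminus I$ containing $\sigma$.

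To verify the complement property, suppose for contradiction that some connected component $X$ of $A_{\min}\setminus D_{\min}$ is not compactly contained in $A_{\min}$; say $\overline X$ accumulates on $\ph_0(A_{\min})$. Pick a connected component $\wh X$ of the preimage of $X$ in $P$; it is closed in $P$ and disjoint from every translate $T^k\wh\sigma$, $k\in\Z$ (since the lifts of the embedded arc $\sigma$ are pairwise disjoint and $\sigma\cap X=\emptyset$). Each $T^k\wh\sigma$ is a simple arc in $P$ whose endpoints accumulate on $\ph_0(P)$ and $\ph_1(P)$, so it separates $P$ into two sides. Since $\wh X$ is connected, it lies on one side of each $T^k\wh\sigma$, hence in a single region $R\subset P$ bounded on the left by $T^k\wh\sigma$ and on the right by $T^{k+1}\wh\sigma$ for some $k$; extending to $\overline P$ by including the finite portions of $\ph_0(P)$ and $\ph_1(P)$ between the endpoints of these two translates, $\overline R$ is a closed topological disk with four sides. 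The set $\wh I\cap R$ separates $\wh X$ from $T^k\wh\sigma$ in $R$, so by the decomposition theorem applied in $\overline R$ there is a connected component $\wh J_0$ of $\wh I\cap R$ separating them. Since $\wh J_0\subset \wh I$ is disjoint from the lateral sides $T^k\wh\sigma$ and $T^{k+1}\wh\sigma$ (both contained in $D_{\min}\subset P\setminus \wh I$), it cannot accumulate on either lateral side; thus $\wh J_0$ must bridge the top and bottom sides of $\overline R$, so $\overline{\wh J_0}$ accumulates on $\ph_1(P)$. But $\wh J_0$ lies in a single connected lift $\wh Q_0$ of a compactly contained component $Q_0$ of $I$, and so $\wh Q_0$ is a compact subset of $P$ bounded away from $\ph P$, contradicting $\overline{\wh J_0}\cap \ph_1(P)\neq \emptyset$.

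The main obstacle is the careful topological bookkeeping in the universal cover: justifying that the connected component $\wh X$ of the preimage of $X$ lies in a single strip $R$ (which relies on the translates $T^k\wh\sigma$ being pairwise disjoint spanning arcs of $P$ that block any horizontal escape of $\wh X$), and confirming that any separating component of $\wh I\cap R$ in the disk $\overline R$ must reach $\ph_1(P)$ because $\wh I$ is disjoint from the lateral sides $T^k\wh\sigma$ and $T^{k+1}\wh\sigma$ and so can only accumulate on the portions of $\ph P$ along the top and bottom of $\overline R$.
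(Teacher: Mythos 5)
Your construction of $D_{\min}$ matches the paper's exactly: both use the fact that no single component of $I:=\Idle(K,\E(t_\gamma),A_{\min})$ separates the frontier components to produce, via a planar decomposition/Janiszewski-type argument, a spanning arc in $A_{\min}\setminus I$, and both take $D_{\min}$ to be the component of $A_{\min}\setminus I$ containing it. Where you diverge is in the verification. The paper stays in $A_{\min}$ and argues in a few lines that for a complementary component $Q'$ the set $Q=Q'\setminus\inte(Q')=\ph Q'$ lies in $I$, and that $Q'$ non-compact would force $Q$ non-compact; you instead lift to the strip $P$, cut it into quadrilaterals $\overline R$ by the translates of the lifted arc, and look for a separating continuum of $\wh I$ inside one quadrilateral.

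There is a genuine gap at the sentence ``thus $\wh J_0$ must bridge the top and bottom sides of $\overline R$.'' A continuum separating a pair of points in a closed disk need not touch the boundary circle at all---a small Jordan curve around one of the two points already separates them---and the classical fact you seem to be invoking (a separating continuum must meet both arcs of $\ph\overline R$ determined by the two points) is only available when \emph{both} separated points lie on $\ph\overline R$. Here one of your two objects, $T^k\wh\sigma$, does lie on $\ph\overline R$, but the other, the point $p\in\wh X\setminus\wh I$, is an interior point of $R$ over which you have no control: $\wh J_0$ could be a loop inside $R$ encircling $p$ (note $\wh X$ is allowed to cross $\wh J_0$, since $\wh X$ may meet $\wh I$), or it could meet $\ph_0(P)$ only---the side on which $\wh X$ accumulates---and never reach $\ph_1(P)$. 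Neither configuration is excluded by your hypotheses, yet your contradiction only fires in the ``bridges both sides'' case. There is also a smaller issue: the decomposition theorem requires a closed subset of the disk $\overline R$, and $\wh I\cap R$ is closed in $R$ but not in $\overline R$; one must pass to its closure, whose separating component can contain points of $\ph P$ and is then not literally a subset of one lift of a component of $I$, so deriving the contradiction requires a boundary-bumping argument on that continuum rather than the one-line observation you give. As written the proof does not close; you would need either to replace $p$ by a boundary point of $\overline{\wh X}$ that you can show is separated from $T^k\wh\sigma$, or to treat the interior-loop and one-sided configurations head on.
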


\begin{remark} Observe that if $Q$ is connected component of the set $A_{\min} \setminus D_{\min}$ then beside being compactly contained in $A_{\min}$ we have that $Q$ does not separate the two frontier components of $A_{\min}$ because $D_{\min}$ is connected.
Let $\wh{D}_{\min}$ be the lift of $D_{\min}$ to $P$ under the covering map. Then $\wh{D}_{\min}$ is connected. Moreover every connected component of the set $P \setminus \wh{D}_{\min}$ is compactly contained in $P$
\end{remark}

\begin{proof} By the previous propositions  we know that every connected component of the set $\Idle(K,\E(t_{\gamma}),A_{\min})$ is compactly contained in $A_{\min}$ and it does not separate the two frontier components of $A_{\min}$. Therefore we can find an arc

$$
\gamma \subset A_{\min} \setminus \Idle(K,\E(t_{\gamma}),A_{\min}),
$$
\noindent
such that $\gamma$ connects the two frontier components of $A_{\min}$. Let $D_{\min}$ be the connected component of the set $ A_{\min} \setminus \Idle(K,\E(t_{\gamma}),A_{\min})$ that contains $\gamma$. We need to show that every connected component of the set $A_{\min} \setminus D_{\min}$ is compactly contained in $A_{\min}$. Let $Q'$ be such a component and let $Q=Q' \setminus \inte(Q')$. Then $Q \subset \Idle(K,\E(t_{\gamma}),A_{\min})$.
If $Q'$ is not compactly contained in $A_{\min}$ then neither is $Q$. But this is impossible. This proves the proposition.
\end{proof}

\begin{remark} In the previous proof we have that each connected component $Q'$ of the set $A_{\min} \setminus D_{\min}$ is contained in some
$\Idle(K',\E(t_{\gamma}),A_{\min})$, where $K'$ depends on $Q'$.
\end{remark}

\begin{proposition}\label{semi-conjugate} Let $\wt{a}_i \in \Gamma(\ac_i)$ and $\wt{b}_j \in \Gamma(\bc_j)$. Let $\wh{a}_i, \wh{b}_j: \overline{P} \to \overline{P}$ be two lifts. Then there exist $k,l \in \Z$, and  a map $\chi:P \to P$ such that $\chi \circ \wh{a}_i=T^k \circ \chi$, and $\chi \circ \wh{b}_j=T^l \circ \chi$, where $T^k$ and $T^l$ are translations for $k$ and $l$ respectively. Moreover, if $\wh{a}_i$ has a fixed point in $P$ then $k=0$. Similarly if $\wh{b}_j$ has a fixed point in $P$ then $l=0$.
\end{proposition}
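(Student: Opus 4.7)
First, I would exploit the commutation structure. Curves in $\ac_i$ and $\bc_j$ are pairwise disjoint and all disjoint from $\gamma$, so the corresponding Dehn twists pairwise commute in $\MC(M)$ and commute with $t_\gamma$; applying the homomorphism $\E$ gives that $\wt{a}_i$ and $\wt{b}_j$ commute in $\Homeo(M)$ and each commutes with $\E(t_\gamma)$. Combined with preservation of the minimal decompositions $\Se(i,j)$ and of $A_{\chara}$, one argues that $\wt{a}_i$ and $\wt{b}_j$ preserve $A_{\min}$ and therefore admit lifts $\wh{a}_i,\wh{b}_j:\overline{P}\to\overline{P}$ commuting with the deck translation $T(z)=z+1$.

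Next, I would identify the integers $k, l$. Since each Dehn twist $t_\alpha$ with $\alpha$ disjoint from $\gamma$ may be chosen with support away from $A_{\min}$, the restriction $\wt{a}_i|_{A_{\min}}$ has a well-defined isotopy class in $\MC(A_{\min})\cong\Z$, so its lift $\wh{a}_i$ has an integer twist number $k$ (in the sense of Definition \ref{twist-number}); similarly $\wh{b}_j$ has integer twist number $l$. The integrality ultimately uses that $\wh{a}_i, \wh{b}_j$ have conformal fixed points on the boundary of $P$, obtained via fixed-point arguments analogous to those in Proposition \ref{fixed-point}. Moreover, the lifts commute on the nose because their commutator lies in $\langle T\rangle$ but is isotopic to $\mathrm{id}$.

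To construct $\chi$, fix $z_0\in P$ and define, on the orbit of $z_0$ under $G=\langle T,\wh{a}_i,\wh{b}_j\rangle$,
$$
\chi\bigl(\wh{a}_i^{n}\wh{b}_j^{m}T^{p}(z_0)\bigr)=T^{nk+ml+p}(z_0).
$$
Commutativity of $G$ makes this well-defined on $G\cdot z_0$, and the equivariances $\chi\wh{a}_i=T^k\chi$ and $\chi\wh{b}_j=T^l\chi$ hold by construction on the orbit. One then extends $\chi$ to all of $P$ via a Poincar\'e-style monotone extension along horizontal slices, using the uniform displacement bounds from Proposition \ref{uniform-translation}. The ``moreover'' part is then automatic: $\wh{a}_i(z_\ast)=z_\ast$ yields $T^k\chi(z_\ast)=\chi(\wh{a}_i(z_\ast))=\chi(z_\ast)$, forcing $k=0$ since $T^k$ is fixed-point-free for $k\neq 0$; symmetrically for $\wh{b}_j$.

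The main obstacle will be the extension of $\chi$ from the discrete orbit $G\cdot z_0$ to all of $P$ while preserving both equivariances and ensuring $\chi$ takes values in $P$. The classical one-dimensional Poincar\'e semi-conjugation relies on monotonicity of $\R$, and carrying that over to the 2D strip requires delicate treatment of each horizontal slice, or alternatively a transversal to the $G$-action combined with the isotopies between $\wh{a}_i, \wh{b}_j$ and $T^k, T^l$ to interpolate consistently across $P$.
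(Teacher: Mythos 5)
The paper's proof is a one-liner using a tool you never invoke: Moore's decomposition theorem. Take $\wh{\Se}(\ac_i,\bc_j)$, the lift to $P$ of the minimal decomposition for $\Gamma(\ac_i,\bc_j)$, and let $\chi:P\to P$ be the Moore map that collapses each decomposition component to a point. Because an admissible decomposition is, by definition, preserved componentwise by every element of the group (Definition~\ref{admissible}(1)), the lift $\wh{a}_i$ sends each component $\wh S\in\wh{\Se}$ to some $T^{k(\wh S)}(\wh S)$; a connectedness argument on the Moore quotient shows $k(\wh S)$ is independent of $\wh S$, so $\chi\circ\wh{a}_i=T^k\circ\chi$. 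The same reasoning gives $l$ for $\wh{b}_j$, and the fixed-point clause is immediate since $T^k$ is fixed-point-free for $k\ne 0$. Nothing is constructed by hand; the decomposition does all the work.

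Your proposal is a genuinely different route, and it has a gap you yourself flag but underestimate. Defining $\chi$ on a single $G$-orbit by $\chi(\wh{a}_i^{n}\wh{b}_j^{m}T^{p}(z_0))=T^{nk+ml+p}(z_0)$ is not obviously well-defined (you need all relations in $G\cdot z_0$ to respect the exponents, and you have not shown the action on the orbit is free), and more fundamentally there is no way to extend it to all of $P$: the Poincar\'e semi-conjugacy extension is a one-dimensional monotonicity argument on $\R$, and the strip $P$ has no comparable order structure. "Extension along horizontal slices" is not available because $\wh{a}_i$ and $\wh{b}_j$ do not preserve horizontal slices. You are also a bit cavalier in asserting that the lifts commute on the nose "because the commutator is isotopic to id"---the deck transformation $T$ is also isotopic to the identity, so that reasoning proves nothing and a real argument is needed. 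In short, your approach tries to build $\chi$ from scratch; the paper instead reads $\chi$ off the minimal decomposition via Moore's theorem, which is the idea you are missing and which collapses the whole difficulty.
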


\begin{proof} Let $\wh{\Se}(\ac_i,\bc_j)$ be the lift of the minimal decomposition for the group $\Gamma(\ac_i,\bc_j)$ to $P$. Let $\chi:P \to P$ be the Moore's map, that is $\chi$ maps every component of $\wh{\Se}(\ac_i,\bc_j)$ to a point. Then $\chi$ is the required map. If $\wh{a}_i(p)=p$, for some $p \in P$, then  $\chi(p)=T^k(\chi(p))$ which shows that $k=0$. The same argument goes if  $\wh{b}_j$ has a fixed point in $P$.
\end{proof}

\section{The proof of Theorem \ref{main} }

\subsection{Special subsets of the minimal annulus} So far we have considered the minimal decompositions $\Se(i,j)$ for the groups $\Gamma(\ac_i,\bc_j)$, $i,j \in \{1,2\}$. Let $\Gamma(\ac_i)$, $i=1,2$,  be the group genererated by all $\E(t_{\alpha})$,  where $\alpha \in \ac_i$. Let  $\Gamma(\bc_i)$, $i=1,2$,  be the group genererated by all $\E(t_{\beta})$, where $\beta \in \bc_i$. Let $\Se(\ac_i)$ and $\Se(\bc_j)$ be the corresponding minimal decompositions. Then for $p \in A_{\min}$ we have that $S_p(\ac_i) \subset S_p(i,j)$, and $S_p(\bc_j) \subset S_p(i,j)$. This implies that every such component $S_p(\ac_i)$ (or
$S_p(\bc_j) $) is compactly contained in $A_{\min}$ and a single lift of  $S_p(\ac_i)$ (or $S_p(\bc_j) $) to the strip  $P$ has the Euclidean diameter less than $C_{\min}$.

\begin{definition} Let $p \in A_{\min}$. We say that $p \in O_0$ if for some pair $i,j \in \{1,2\}$, we have that $p$ belongs to the interior of the  component $S_p(i,j) \in \Se(i,j)$. We say that $p \in E$ if $\E(t_{\gamma})$ setwise fixes at least one of the four components $S_p(\ac_1) \in \Se(\ac_1)$,  $S_p(\ac_2) \in \Se(\ac_2)$,  $S_p(\bc_1) \in \Se(\bc_1)$,   $S_p(\bc_2) \in \Se(\bc_2)$.  Set $X=A_{\min} \setminus (O_0 \cup E)$.
\end{definition}

\begin{definition}
Let $i,j\in\{ 1,2\}$ be fixed. Define $X_{\ac_i,\bc_j}$ to be
the set of all $p \in X$ such that every element of the group $\Gamma(\ac_i)$
setwise fixes the component $S_p(\bc_j) \in \Se(\bc_j)$. Define $X_{\bc_i,\ac_j}$ to be
the set of all $p \in X$ such that every element of the group $\Gamma(\bc_i)$
setwise fixes the component $S_p(\ac_j) \in \Se(\ac_j)$.
\end{definition}

\begin{proposition}\label{X-0} For every pair $i,j \in \{1,2\}$ we have
\begin{equation}\label{X}
X=X_{\ac_i,\bc_j} \bigcup X_{\bc_j,\ac_i}.
\end{equation}
\end{proposition}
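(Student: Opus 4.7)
The plan is to apply the triod lemma (Lemma~\ref{triod-lemma}) with $F = \Gamma(\ac_i)$ and $G = \Gamma(\bc_j)$, so that the group generated by $F$ and $G$ is exactly $\Gamma(\ac_i,\bc_j)$ and its minimal decomposition is $\Se(i,j)$.

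First I would verify that $F$ and $G$ commute. By construction, every curve in $\ac_i$ is disjoint from every curve in $\bc_j$ (they lie on opposite sides of the separating configuration), so $([\alpha_k],[\beta_l]) = 0$ for all $\alpha_k \in \ac_i$ and $\beta_l \in \bc_j$. Hence the Dehn twists $t_{\alpha_k}$ and $t_{\beta_l}$ commute in $\MC(M)$, and since $\E$ is a homomorphism their images $\E(t_{\alpha_k})$ and $\E(t_{\beta_l})$ commute in $\Homeo(M)$.

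Next I would check the two hypotheses of Lemma~\ref{triod-lemma} for an arbitrary $p \in X$. Since $p \in X \subset A_{\min}$, Proposition~\ref{minimal-5} guarantees that the component $S_p(i,j) \in \Se(i,j)$ is acyclic and compactly contained in $A_{\min}$; in particular $p \in M_{\Se(i,j)}$. Moreover, the condition $p \notin O_0$ in the definition of $X$ says that $p$ does not lie in the interior of $S_p(i,j)$ (for any pair $(i,j)$, and in particular for the one we have fixed). These are precisely the hypotheses needed to invoke the triod lemma.

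The conclusion of Lemma~\ref{triod-lemma} then yields one of two alternatives: either $\wt{f}(S_p(\bc_j)) = S_p(\bc_j)$ for every $\wt{f} \in \Gamma(\ac_i)$, which by definition means $p \in X_{\ac_i,\bc_j}$, or $\wt{g}(S_p(\ac_i)) = S_p(\ac_i)$ for every $\wt{g} \in \Gamma(\bc_j)$, which by definition means $p \in X_{\bc_j,\ac_i}$. This gives the inclusion $X \subset X_{\ac_i,\bc_j} \cup X_{\bc_j,\ac_i}$; the reverse inclusion is immediate from the definitions of $X_{\ac_i,\bc_j}$ and $X_{\bc_j,\ac_i}$, both of which were defined as subsets of $X$. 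The proof is essentially a direct application of the triod machinery, so I do not anticipate any real obstacle; the only thing one must be careful about is invoking Proposition~\ref{minimal-5} to certify that $S_p(i,j)$ really is acyclic, and thereby that $p$ lies in the relevant set $M_{\Se(i,j)}$.
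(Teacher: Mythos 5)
Your proposal is correct and matches the paper's proof, which is simply to note that $p\in X$ forces $p\notin O_0$ and then to invoke Lemma~\ref{triod-lemma} with $F=\Gamma(\ac_i)$, $G=\Gamma(\bc_j)$; the paper leaves implicit the verifications you spell out (commutativity of $F$ and $G$, acyclicity of $S_p(i,j)$ via Proposition~\ref{minimal-5}, and $p\notin\inte(S_p(i,j))$), but these are exactly the right things to check and they all go through. The only bookkeeping subtlety worth noting is that in the odd-genus case $\gamma_1$ belongs to both $\ac_2$ and $\bc_2$, but this causes no problem since $t_{\gamma_1}$ commutes with itself, so the elementwise commutativity of the two generating sets still holds.
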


\begin{proof} If $p \in X$ then $p$ does not belong to the set $O_0$. The identity (\ref{X}) then follows directly from Lemma \ref{triod-lemma}.
\end{proof}

Now we use the Artin type relations introduced in Section 2.

\begin{proposition}\label{X-1} We have $X_{\ac_1,\bc_i} \cap X_{\ac_2,\bc_i}=\emptyset$ and  $X_{\bc_1,\ac_i} \cap X_{\bc_2,\ac_i}=\emptyset$, for every $i \in \{1,2\}$.
\end{proposition}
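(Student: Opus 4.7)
The plan is to exploit the Artin-type relations from Section 2, which express $\E(t_\gamma)$ (up to a factor of $\E(t_{\gamma_1})$ in the odd case) as a word in the elements $\E(t_{\alpha_k})$, together with the fact that $p \in X$ excludes $p$ from the set $E$ on which $\E(t_\gamma)$ fixes one of the four components $S_p(\ac_i), S_p(\bc_j)$.

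Suppose for contradiction that $p \in X_{\ac_1,\bc_i} \cap X_{\ac_2,\bc_i}$. By definition, every element of $\Gamma(\ac_1)$ and every element of $\Gamma(\ac_2)$ setwise fixes the component $S_p(\bc_i) \in \Se(\bc_i)$. Hence the entire group $\Gamma$ generated by $\Gamma(\ac_1) \cup \Gamma(\ac_2)$ setwise fixes $S_p(\bc_i)$. In particular, every $\E(t_{\alpha_k})$ with $\alpha_k \in \ac_1 \cup \ac_2$ setwise fixes $S_p(\bc_i)$, and so does any product of such elements.

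For the even genus case, relation (\ref{even-relation}) together with the homomorphism property of $\E$ gives
\[
\E(t_\gamma) = \bigl(\E(t_{\alpha_1}) \circ \cdots \circ \E(t_{\alpha_g})\bigr)^{2g+2} \in \Gamma,
\]
so $\E(t_\gamma)$ setwise fixes $S_p(\bc_i)$, placing $p \in E$ and contradicting $p \in X$. For the odd genus case, relation (\ref{odd-relation}) gives
\[
\E(t_\gamma) \circ \E(t_{\gamma_1}) = \bigl(\E(t_{\alpha_1}) \circ \cdots \circ \E(t_{\alpha_g})\bigr)^{g+1} \in \Gamma.
\]
Since $\gamma_1 \in \ac_2$ by definition, we have $\E(t_{\gamma_1}) \in \Gamma(\ac_2) \subset \Gamma$, so $\E(t_{\gamma_1})$ setwise fixes $S_p(\bc_i)$. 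Composing with its inverse, we again conclude that $\E(t_\gamma)$ setwise fixes $S_p(\bc_i)$, so $p \in E$, a contradiction.

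The second identity $X_{\bc_1,\ac_i} \cap X_{\bc_2,\ac_i} = \emptyset$ is proved by the identical argument applied to the parallel relations on the $\beta$-chain (the right-hand sides of (\ref{even-relation}) and (\ref{odd-relation})), using that $\gamma_1 \in \bc_2$ in the odd case. There is no real obstacle here beyond being careful to distinguish the even and odd cases and to observe that $\gamma_1$ lives in the appropriate $\ac_2$ (respectively $\bc_2$) so that the extra factor $\E(t_{\gamma_1})$ arising from (\ref{odd-relation}) can be cancelled within the group $\Gamma$; the substance of the argument is simply that the Artin relation forces $\E(t_\gamma)$ to lie in the group generated by the twist-images along the $\alpha$'s (or $\beta$'s), so invariance of $S_p(\bc_i)$ under all these twists propagates to $\E(t_\gamma)$.
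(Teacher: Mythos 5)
Your proof is correct and follows the same route as the paper: the hypothesis that every element of both $\Gamma(\ac_1)$ and $\Gamma(\ac_2)$ fixes $S_p(\bc_i)$, combined with the Artin relation (\ref{even-relation}) or (\ref{odd-relation}), forces $\E(t_\gamma)$ to fix $S_p(\bc_i)$, so $p\in E$, contradicting $p\in X$. You spell out the cancellation of the extra factor $\E(t_{\gamma_1})$ in the odd genus case (valid since $\gamma_1\in\ac_2$, resp.\ $\gamma_1\in\bc_2$), a detail the paper leaves implicit.
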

\begin{proof} We show  $X_{\ac_1,\bc_1} \cap X_{\ac_2,\bc_1}=\emptyset$. The other case is proved in the same way.
Assume that $X_{\ac_1,\bc_1} \cap X_{\ac_2,\bc_1}\ne \emptyset$. Let  $p \in  X_{\ac_1,\bc_1} \cap X_{\ac_2,\bc_1}$.
Then all elements of both  groups $\Gamma(\ac_1)$ and  $\Gamma(\ac_2)$ setwise fix the set
$S_p(\bc_1)$. We apply the Artin type relation (when the genus of $M$ is even we apply (\ref{even-relation}) and when genus is odd we apply (\ref{odd-relation})) and obtain that $\E(t_{\gamma})$ setwises fixes the set $S_p(\bc_1)$. This shows that $p \in E$ which contradicts the assumption
$p \in X$ since $X \cap E=\emptyset$.
\end{proof}

\begin{proposition}\label{X-2} We have $X_{\ac_i,\bc_j} \cap X_{\bc_j,\ac_i}=\emptyset$ for every pair $i,j \in \{1,2\}$.
\end{proposition}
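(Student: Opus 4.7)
The plan is to iterate Lemma \ref{triod-lemma} (the triod lemma) and combine each resulting dichotomy with Proposition \ref{X-1}. Suppose, for contradiction, that $p \in X_{\ac_i,\bc_j} \cap X_{\bc_j,\ac_i}$, and let $i' \in \{1,2\}\setminus\{i\}$, $j' \in \{1,2\}\setminus\{j\}$. I will argue that three successive applications of the triod lemma, to three different pairs of commuting subgroups, force $p$ into one of the intersections already forbidden by Proposition \ref{X-1}.

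First, verify that Lemma \ref{triod-lemma} applies at $p$ for any pair $\Gamma(\ac_k), \Gamma(\bc_l)$, $k,l\in\{1,2\}$. These subgroups commute because the curves in $\ac_k$ lie on one side of $\gamma$ and those in $\bc_l$ on the other, so their intersection numbers vanish. Also $p \in A_{\min}$, and by Lemma \ref{minimal-1} the annulus $A_{\min}$ lies in $M_{\Se(k,l)}(\gamma) \subset M_{\Se(k,l)}$; and $p\in X$ means $p\notin O_0$, so $p$ is not in the interior of $S_p(k,l)\in\Se(k,l)$. Hence the hypotheses of Lemma \ref{triod-lemma} hold.

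Now the case analysis. Applying the triod lemma to $\Gamma(\ac_{i'})$ and $\Gamma(\bc_j)$: either $p \in X_{\ac_{i'},\bc_j}$ or $p \in X_{\bc_j,\ac_{i'}}$. In the first subcase, combined with the hypothesis $p\in X_{\ac_i,\bc_j}$, Proposition \ref{X-1} (the $X_{\ac_1,\bc_j}\cap X_{\ac_2,\bc_j}=\emptyset$ instance) yields a contradiction, so $p \in X_{\bc_j,\ac_{i'}}$. Next, apply the triod lemma to $\Gamma(\ac_i)$ and $\Gamma(\bc_{j'})$: either $p \in X_{\ac_i,\bc_{j'}}$ or $p \in X_{\bc_{j'},\ac_i}$. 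The second alternative, together with the hypothesis $p\in X_{\bc_j,\ac_i}$, contradicts Proposition \ref{X-1} (the $X_{\bc_1,\ac_i}\cap X_{\bc_2,\ac_i}=\emptyset$ instance), so $p \in X_{\ac_i,\bc_{j'}}$. Finally, apply the triod lemma to $\Gamma(\ac_{i'})$ and $\Gamma(\bc_{j'})$: the first alternative places $p$ in $X_{\ac_{i'},\bc_{j'}}$, which together with the just-derived $p\in X_{\ac_i,\bc_{j'}}$ contradicts Proposition \ref{X-1}; the second alternative places $p$ in $X_{\bc_{j'},\ac_{i'}}$, which together with $p\in X_{\bc_j,\ac_{i'}}$ from the first step also contradicts Proposition \ref{X-1}. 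Every branch thus yields a contradiction.

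The proof has no analytic content beyond what is already in Lemma \ref{triod-lemma} and Proposition \ref{X-1}; the principal difficulty is purely bookkeeping. At each step one must keep track of which of the eight memberships $X_{\ac_k,\bc_l}, X_{\bc_l,\ac_k}$ have accumulated at $p$ so as to pair them correctly against the precise hypotheses of Proposition \ref{X-1}. No new structural ingredient is needed, and the Artin relations of Section 2 enter only indirectly, through their prior use in Proposition \ref{X-1}.
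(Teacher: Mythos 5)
Your proof is correct and is essentially the paper's own argument: three successive applications of the dichotomy of Proposition~\ref{X-0} (via Lemma~\ref{triod-lemma}) to the pairs $(i',j)$, $(i,j')$, $(i',j')$, with each branch closed off by an instance of Proposition~\ref{X-1}. The only difference is cosmetic — the paper re-derives the Artin-relation contradiction inside the proof of Proposition~\ref{X-2} rather than citing Proposition~\ref{X-1}, and it takes steps $1$ and $2$ in the opposite order.
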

\begin{proof}
Assume that $ X_{\ac_1,\bc_1} \cap X_{\bc_1,\ac_1} \ne \emptyset$. We derive a contradiction.  Let $p \in  X_{\ac_1,\bc_1} \cap X_{\bc_1,\ac_2}$. Then
every element of the group $\Gamma(\ac_1)$ setwise fixes the component $S_p(\bc_1) \in \Se(\bc_1)$ and  every element of the group $\Gamma(\bc_1)$
setwise fixes the component $S_p(\ac_1) \in \Se(\ac_1)$.  Now we apply (\ref{X}) to the pair $(i,j)=(1,2)$. This shows that at least one of the following holds:
\begin{enumerate}
\item   Every element of the group $\Gamma(\ac_1)$ setwise fixes the component $S_p(\bc_2)$.
\item Every element of the group $\Gamma(\bc_2)$ setwise fixes the component $S_p(\ac_1)$.
\end{enumerate}

Assume that $(2)$ holds. Then  the conclusion is that all elements of both  groups $\Gamma(\bc_1)$ and  $\Gamma(\bc_2)$ setwise fix the set
$S_p(\ac_1)$. We then apply the Artin type relation (when the genus of $M$ is even we apply (\ref{even-relation}) and when genus is odd we apply (\ref{odd-relation})) and obtain that $\E(t_{\gamma})$ setwises fixes the component $S_p(\ac_1)$. But then $p \in E$ which contradits that $p \in X$.
This shows that the $(2)$ can not hold so we conclude that  every element of the group $\Gamma(\ac_1)$ setwise fixes the component $S_p(\bc_2)$. If we apply (\ref{X}) to the pair $(i,j)=(2,1)$, by the same argument we conclude that  every element of the group $\Gamma(\bc_1)$ setwise fixes the component $S_p(\ac_2)$. Let us collect the statements we have proved so far
\begin{enumerate}
\item Every element of the group $\Gamma(\ac_1)$ setwise fixes the component $S_p(\bc_1)$.
\item Every element of the group $\Gamma(\bc_1)$ setwise fixes the component $S_p(\ac_1)$.
\item Every element of the group $\Gamma(\ac_1)$ setwise fixes the component $S_p(\bc_2)$.
\item Every element of the group $\Gamma(\bc_1)$ setwise fixes the component $S_p(\ac_2)$.
\end{enumerate}

We now apply  (\ref{X}) to the pair $(i,j)=(2,2)$. If every element of the group $\Gamma(\ac_2)$ setwise fixes the component $S_p(\bc_2)$ then we have that  all elements of both  groups $\Gamma(\ac_1)$ and  $\Gamma(\ac_2)$ setwise fix the set
$S_p(\bc_2)$. Again by the corresponding Artin type relation this implies that  $\E(t_{\gamma})$ setwises fixes the component $S_p(\bc_2)$. This shows that $p \in E$ which is a contradiction. Similarly if every element of the group $\Gamma(\bc_2)$ setwise fixes the component $S_p(\ac_2)$ then
all elements of both  groups $\Gamma(\bc_1)$ and  $\Gamma(\bc_2)$ setwise fix the set $S_p(\ac_2)$. Again a contradiction.
\vskip .1cm
One similarly shows that  $ X_{\ac_i,\bc_j} \cap X_{\bc_j,\ac_i}= \emptyset$ for other three pairs $(i,j)$.
\end{proof}

\begin{lemma} \label{X-3} We have
$$
X_1=X_{\bc_1,\ac_1}=X_{\ac_1,\bc_2}=X_{\bc_2,\ac_2}=X_{\ac_2,\bc_1},
$$
and
$$
X_2=X_{\bc_2,\ac_1}=X_{\ac_1,\bc_1}=X_{\bc_1,\ac_2}=X_{\ac_2,\bc_2}.
$$
\noindent
The sets $X_1$ and $X_2$ are disjoint.
\end{lemma}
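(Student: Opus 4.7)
The plan is to translate the statement into a combinatorial claim about a four-valued labelling on points of $X$, and then trace through the implications given by Propositions \ref{X-0}, \ref{X-1}, \ref{X-2}. For each pair $(i,j)\in\{1,2\}^2$, Proposition \ref{X-0} says $X=X_{\ac_i,\bc_j}\cup X_{\bc_j,\ac_i}$ and Proposition \ref{X-2} says this union is disjoint, so each $p\in X$ carries a well-defined label $\epsilon_{i,j}(p)\in\{A,B\}$, where $\epsilon_{i,j}(p)=A$ means $p\in X_{\ac_i,\bc_j}$ and $\epsilon_{i,j}(p)=B$ means $p\in X_{\bc_j,\ac_i}$. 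The lemma will follow once I show that only two of the sixteen label patterns are realizable, and that these two patterns are precisely the defining conditions of $X_1$ and $X_2$.

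The four disjointness statements of Proposition \ref{X-1} give the implications
$\epsilon_{1,1}=A\Rightarrow\epsilon_{2,1}=B$,
$\epsilon_{1,2}=A\Rightarrow\epsilon_{2,2}=B$,
$\epsilon_{1,1}=B\Rightarrow\epsilon_{1,2}=A$,
$\epsilon_{2,1}=B\Rightarrow\epsilon_{2,2}=A$,
together with their contrapositives. I would next chain these together: starting from $\epsilon_{1,1}=A$ and cycling through, one obtains $\epsilon_{2,1}=B$, then $\epsilon_{2,2}=A$, then $\epsilon_{1,2}=B$, and finally back to $\epsilon_{1,1}=A$, which is self-consistent. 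Starting from $\epsilon_{1,1}=B$ analogously forces $\epsilon_{1,2}=A$, $\epsilon_{2,2}=B$, $\epsilon_{2,1}=A$. Thus the labelling of a point $p\in X$ is entirely determined by $\epsilon_{1,1}(p)$, and only two global patterns occur.

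The pattern $\epsilon_{1,1}=A$, $\epsilon_{2,2}=A$, $\epsilon_{1,2}=B$, $\epsilon_{2,1}=B$ rewrites exactly as $p\in X_{\ac_1,\bc_1}\cap X_{\ac_2,\bc_2}\cap X_{\bc_2,\ac_1}\cap X_{\bc_1,\ac_2}$, i.e. the defining intersection for $X_2$; the other pattern $\epsilon_{1,1}=B$, etc.\ gives $p\in X_{\bc_1,\ac_1}\cap X_{\bc_2,\ac_2}\cap X_{\ac_1,\bc_2}\cap X_{\ac_2,\bc_1}$, the defining intersection for $X_1$. Since each of the eight sets in the lemma statement belongs to exactly one of the two patterns (e.g.\ $p\in X_{\bc_1,\ac_1}$ forces pattern $B$, hence forces $p$ into all three of $X_{\ac_1,\bc_2}$, $X_{\bc_2,\ac_2}$, $X_{\ac_2,\bc_1}$ as well, and conversely), the chains of equalities defining $X_1$ and $X_2$ both follow. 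Finally, $X_1\cap X_2=\emptyset$ is immediate from $X_{\bc_1,\ac_1}\cap X_{\ac_1,\bc_1}=\emptyset$, a particular case of Proposition \ref{X-2}.

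The argument is essentially combinatorial bookkeeping once the framework is set up; the only mild technical point to be careful about is that one must invoke Proposition \ref{X-0} in all four instances $(i,j)\in\{1,2\}^2$ (not just one) to make sense of the four labels $\epsilon_{i,j}(p)$ simultaneously, and invoke Proposition \ref{X-2} for all four pairs to guarantee the labels are well-defined. I do not expect a genuine obstacle here; the substantive inputs (the Artin-type relations, which were used to prove Propositions \ref{X-1} and \ref{X-2}) have already been absorbed upstream.
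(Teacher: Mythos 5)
Your proof is correct and is essentially the same argument as the paper's, just phrased in the language of a binary labelling rather than chains of set inclusions: the paper derives eight inclusions of the form $X_{\bc_2,\ac_1}\subset X_{\ac_1,\bc_1}$, etc., directly from Propositions \ref{X-0}, \ref{X-1}, \ref{X-2} and combines them into two cyclic chains, which is exactly your chain of implications on the labels $\epsilon_{i,j}$. The final disjointness observation $X_1\cap X_2=\emptyset$ from $X_{\bc_1,\ac_1}\cap X_{\ac_1,\bc_1}=\emptyset$ is likewise identical.
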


\begin{proof} The first two identities follow  from the previous three propositions. This is seen as follows. We have
$X_{\bc_1,\ac_1} \cup X_{\ac_1,\bc_1}=X$ and this union is disjoint.  Since $X_{\bc_1,\ac_1} \cap X_{\bc_2,\ac_1}=\emptyset$ we have
$$
X_{\bc_2,\ac_1} \subset X_{\ac_1,\bc_1}.
$$
\noindent
Since $X_{\ac_1,\bc_1} \cap X_{\ac_2,\bc_1}=\emptyset$ we have
$$
X_{\ac_2,\bc_1} \subset X_{\bc_1,\ac_1}.
$$
Next, we have  $X_{\bc_2,\ac_1} \cup X_{\ac_1,\bc_2}=X$ and this union is disjoint.  Since $X_{\bc_2,\ac_1} \cap X_{\bc_1,\ac_1}=\emptyset$ we have
$$
X_{\bc_1,\ac_1} \subset X_{\ac_1,\bc_2}.
$$
\noindent
Since $X_{\ac_1,\bc_2} \cap X_{\ac_2,\bc_2}=\emptyset$ we have
$$
X_{\ac_2,\bc_2} \subset X_{\bc_2,\ac_1}.
$$

Next we have  $X_{\bc_1,\ac_2} \cup X_{\ac_2,\bc_1}=X$ and this union is disjoint.  Since $X_{\bc_1,\ac_2} \cap X_{\bc_2,\ac_2}=\emptyset$ we have
$$
X_{\bc_2,\ac_2} \subset X_{\ac_2,\bc_1}.
$$
\noindent
Since $ X_{\ac_2,\bc_1} \cap  X_{\ac_1,\bc_1}=\emptyset$ we have
$$
X_{\ac_1,\bc_1} \subset X_{\bc_1,\ac_2}.
$$
\noindent
Finally we have $X_{\bc_2,\ac_2} \cup X_{\ac_2,\bc_2}=X$. In the same way as above this shows
$$
X_{\bc_1,\ac_2} \subset X_{\ac_2,\bc_2},
$$
\noindent
and
$$
X_{\ac_1,\bc_2} \subset X_{\bc_2,\ac_2}.
$$
\noindent
Combining these eight inclusions we obtain the first two identities of this proposition.
\vskip .1cm
The sets $X_1$ and $X_2$ are disjoint because $X_1=X_{\bc_1,\ac_1}$ and $X_2=X_{\ac_1,\bc_1}$ and by Proposition \ref{X-2} we have that
$X_{\bc_1,\ac_1}$ and  $X_{\ac_1,\bc_1}$ are disjoint.
\end{proof}

\subsection{The proof of Theorem \ref{main}} Let $\Theta< \Homeo(M)$ be the group generated by the elements from all four groups $\Gamma(\ac_i,\bc_j)$.
\begin{definition} Let
$$
O=\bigcup_{\wt{\theta} \in \Theta} \wt{\theta}(O_0).
$$
\end{definition}
We have

\begin{proposition}\label{minimal-long}  Let $\wh{O}$ be the  lift of $O$ to $P$ and let $\wh{f}:\overline{P} \to \overline{P}$ be a lift of $\E(t_{\gamma})$ to $\overline{P}$. Assume that the points $p$ and $q$ belong to the same connected component of the set  $\wh{O}$. Then the cyclic group generated by $\wh{f}$ is $K$ long range Lipschitz on the pair of points $p,q$, for some $K>0$ (the constant $K$  may depend on the choice of $p$ and $q$)
\end{proposition}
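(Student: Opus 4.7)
The plan is to chain $p$ and $q$ through finitely many ``basic lifts'' inside $\wh{O}$ and to bound the Euclidean displacement of $\wh{f}^n$ on each basic lift separately, then sum by the triangle inequality. The source of the bound will be the uniform diameter estimate $C_{\min}$ from Proposition \ref{uniform-diam}.

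First I would observe that $\wh{O}$ is open in $P$ and invariant under $\wh{f}$. Distinct components of any $\Se(i,j)$ are disjoint, so the set of $p\in A_{\min}$ with $p\in\inte(S_p(i,j))$ is open, hence $O_0$ is open; since $\Theta$ acts by homeomorphisms, $O=\bigcup_{\wt{\theta}\in\Theta}\wt{\theta}(O_0)$ is open in $M$, and $\wh{O}=\pi^{-1}(O\cap A_{\min})$ is open in $P$. The curves defining the generators of $\Theta$ are all disjoint from $\gamma$, so every $\wt{\theta}\in\Theta$ commutes with $\E(t_{\gamma})$ in $\Homeo(M)$; moreover $\E(t_{\gamma})$ permutes the components of each $\Se(i,j)$ taking interiors to interiors, so $\E(t_{\gamma})(O_0)=O_0$, $\E(t_{\gamma})(O)=O$, and consequently $\wh{f}(\wh{O})=\wh{O}$.

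Let $V\subset\wh{O}$ be the connected component containing $p$ and $q$, and take a path $\sigma\subset V$ from $p$ to $q$. Every $z\in\sigma$ lies in a connected lift in $P$ of some set of the form $A_{\min}\cap\wt{\theta}(\inte(S))$ with $\wt{\theta}\in\Theta$ and $S$ an acyclic component of some $\Se(i,j)$ contained in $A_{\min}$ (Proposition \ref{minimal-5}). By compactness of $\sigma([0,1])$ finitely many such basic lifts $\tilde{U}_1,\ldots,\tilde{U}_N$ cover $\sigma$, and I can choose a chain $p=p_0,p_1,\ldots,p_N=q$ on $\sigma$ with $p_{k-1},p_k\in\tilde{U}_k$. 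The commutativity of $\E(t_{\gamma})$ with $\wt{\theta}_k$ together with its permutation action on $\Se(i_k,j_k)$ then yields
\[
\E(t_{\gamma})^n\bigl(A_{\min}\cap\wt{\theta}_k(\inte(S_k))\bigr)=A_{\min}\cap\wt{\theta}_k\bigl(\inte(S_k^{(n)})\bigr),\quad S_k^{(n)}:=\E(t_{\gamma})^n(S_k),
\]
with $S_k^{(n)}$ another acyclic component of $\Se(i_k,j_k)$ contained in $A_{\min}$. Hence $\wh{f}^n(\tilde{U}_k)$ is again a single connected lift of the right-hand side, and the problem reduces to showing that this lift has Euclidean diameter in $P$ bounded uniformly in $n$ by some $K_k$; then $K:=\sum_k K_k$ is the constant we seek.

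This last step is the main obstacle. By Proposition \ref{uniform-diam} every connected lift of $\inte(S_k^{(n)})$ in $P$ has Euclidean diameter at most $C_{\min}$, uniformly in $n$, but this bound must be transported through the homeomorphism $\wt{\theta}_k$ of $M$, which in general does not preserve $A_{\min}$. The plan is to fix a lift $\tilde{\theta}_k\colon\wt{M}\to\wt{M}$ of $\wt{\theta}_k$ to the universal cover: its restriction to $P\subset\wt{M}$ is a homeomorphism onto the strip $P':=\tilde{\theta}_k(P)$ that covers $\wt{\theta}_k(A_{\min})$ and intertwines the deck translation $T$ on the two sides. Uniform continuity of the descent to the compact quotient annulus gives a constant $K(\wt{\theta}_k)$ bounding the Euclidean distortion of $\tilde{\theta}_k$ between $P$ and $P'$; combined with a bi-Lipschitz comparison of the Euclidean structures of $P$ and the relevant $\pi_1(M)$-translate of $P'$ on their common region (available because lifts of homeomorphisms of the compact surface $M$ are quasi-isometries in the hyperbolic metric), this converts the uniform bound $C_{\min}$ into the required uniform bound on the Euclidean diameter of $\wh{f}^n(\tilde{U}_k)$ in $P$. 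The hardest step is precisely this geometric reconciliation between the Euclidean structures on $P$ and on $\tilde{\theta}_k(P)$ when $\wt{\theta}_k$ fails to preserve $A_{\min}$; once it is carried out, summing the $K_k$ yields the required constant $K$.
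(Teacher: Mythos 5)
Your plan is structurally the same as the paper's: chain $p$ and $q$ through finitely many lifted sets of the form $\wt{\theta}(S_r(i,j))$, bound each one's displacement under $\wh{f}^n$ uniformly in $n$, and sum. You also correctly use that $\E(t_{\gamma})$ commutes with $\wt{\theta}$ and permutes the components of $\Se(i,j)$ to identify $\wh{f}^n(\tilde U_k)$ as a lift of $\wt{\theta}_k(S_{r'}(i,j))$ for some other component $S_{r'}(i,j)$. Up to that point you have reproduced the paper.

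The gap occurs precisely where you say it does. You write that the bound ``must be transported through the homeomorphism $\wt{\theta}_k$, which in general does not preserve $A_{\min}$,'' and you then attempt a workaround involving lifts of $\wt{\theta}_k$ to the universal cover of $M$ and a ``bi-Lipschitz comparison of the Euclidean structures of $P$ and the relevant $\pi_1(M)$-translate of $\tilde\theta_k(P)$,'' which you yourself flag as the hardest step and do not carry out. This is not what the paper does, and the attempted workaround is not rigorous as written: the Euclidean strip coordinate on $P$ comes from uniformizing $A_{\min}$, the deck translation on $P$ is $T(z)=z+1$, and there is no canonical comparison between this coordinate and any Euclidean coordinate on $\tilde\theta_k(P)$, nor any a priori reason that a general quasi-isometric lift gives the \emph{uniform in $n$} bound you need. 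The paper avoids this entirely: its proof opens with the assertion that every $\wt{\theta}\in\Theta$ \emph{does} setwise preserve $A_{\min}$. Granting that, $\wt{\theta}$ lifts to a homeomorphism $\wh{\theta}:\overline{P}\to\overline{P}$ commuting with $T$, hence uniformly continuous on the whole strip. One then defines $K(\wt{\theta})$ so that any pair of points within Euclidean distance $C_{\min}$ has image under $\wh{\theta}$ within distance $K(\wt{\theta})$. Since by Proposition~\ref{uniform-diam} a single lift of any acyclic component $S_{r'}(i,j)$ meeting $A_{\min}$ has diameter at most $C_{\min}$, it follows that \emph{every} $\wh{f}^n(\wh{S})$ (being $\wh{\theta}$ applied to a single lift of some $S_{r'}(i,j)$) has diameter less than $K(\wt{\theta})$, uniformly in $n$. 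Summing over the finite chain gives $K$.

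So the missing idea is the invariance of $A_{\min}$ under $\Theta$: once you have it, the entire geometric reconciliation problem you describe disappears, because each $\wt{\theta}$ acts directly on the strip $P$ by a $T$-equivariant homeomorphism and uniform continuity is automatic. You should also note, as the paper does in passing, that the lift $\wh{\theta}$ is only unique up to composition with a power of $T$, but since $T$ is a Euclidean isometry the constant $K(\wt{\theta})$ does not depend on the choice of lift. Without the invariance of $A_{\min}$ under $\Theta$ the statement of the proposition cannot be proved along these lines, so if you cannot locate or reprove that invariance you should flag it rather than attempt the quasi-isometry detour.
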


\begin{proof}  Every homeomorphism from $\Theta$ setwise preserves the minimal annulus $A_{\min}$.
Let $\wt{\theta} \in \Theta$ and let $\wh{\theta}:\overline{P} \to \overline{P}$ be a lift. Let $K(\wt{\theta})$ be defined so that for every  two points $z_1,z_2 \in \overline{P}$, such that $|z_1-z_2| \le C_{\min}$, we have that $|\wh{\theta}(z_1)-\wh{\theta}(z_2)| <K(\wt{\theta})$. The constant $K(\wt{\theta})$ exists because $\wh{\theta}$ is uniformly continuous on $\overline{P}$ (it commutes with the translation for $1$), and $K(\wt{\theta})$  does not depend on the choice of a lift $\wh{\theta}$.
\vskip .1cm
Fix $\wt{\theta} \in \Theta$. Every point  $r \in O_0$ is contained in the interior of the component $S_r(i,j) \in \Se(i,j)$, for some pair $i,j$. A single lift of $S_r(i,j)$ to $P$ has the Euclidean diameter less than $C_{\min}$. Let $S=\wt{\theta}(S_r(i,j))$ and denote by $\wh{S}$ a single lift of $S$ to $P$.
Then the Euclidean diameter of $\wh{S}$ is less than $K(\wt{\theta})$.  The homeomorphism $\E(t_{\gamma})$ commutes with $\wt{\theta}$ (also recall that $\E(t_{\gamma})$ respects the decomposition $\Se(i,j)$). This implies  that the set $\wh{f}^{n}(\wh{S})$ is a single lift of the interior of some $\wt{\theta}(S_q(k,l))$. This shows that the Euclidean diameter of the set $\wh{f}^{n}(\wh{S})$ is less than $K(\wt{\theta})$.
\vskip .1cm
Since $p$ and $q$ are in the same connected component of $\wh{O}$ there exists an arc $l \subset \wh{O}$ with the endpoints $p$ and $q$. Every point on $l$ belongs to a single lift of some component $\wt{\theta}(S_r(i,j))$, for some $\wt{\theta}\in \Theta$, some $r \in O_0$ and some pair $i,j$.
Since $l$ is a closed subset of $\wh{O}$ we can find finitely many such components $\wt{\theta}(S_r(i,j))$. Therefore the arc $l$ is covered by finitely many sets $D_1,...,D_k \subset P$, where each $D_t$, $t=1,..,k$, is a single lift of some $\wt{\theta}(S_r(i,j))$ to $P$. Therefore there exists a constant $K'>0$ such that the Euclidean diameter of every set $\wh{f}^{n}(D_i)$, $n \in \N$,  is less than $K'$. This shows that $|\wh{f}^{n}(p)-\wh{f}^{n}(q)| \le kK'$, for every $n \in \N$. Set $K=kK'$. This proves the proposition.
\end{proof}

\begin{proposition}
We have $(A_{\min} \setminus O) \subset \Idle(3C_{\min},\E(t_{\gamma}),A_{\min})$.
\end{proposition}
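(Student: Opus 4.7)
The strategy is case analysis based on the inclusion $A_{\min}\setminus O\subset A_{\min}\setminus O_0 = E\cup X$. First suppose that $p\in E$. Then $\E(t_\gamma)$ setwise fixes one of the four components $S_p(\ac_1)$, $S_p(\ac_2)$, $S_p(\bc_1)$, $S_p(\bc_2)$; say $\E(t_\gamma)(S_p(\ac_1))=S_p(\ac_1)$. Fix a lift $z\in P$ of $p$ and let $\wh S\subset P$ denote the connected component of the lift of $S_p(\ac_1)$ containing $z$. Since $S_p(\ac_1)\subset S_p(i,j)$ for every $(i,j)$, the Euclidean diameter of $\wh S$ is at most $C_{\min}$. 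The lifts of $\E(t_\gamma)$ form a $T$-torsor and permute the $T$-translates of $\wh S$; since $\E(t_\gamma)$ setwise fixes $S_p(\ac_1)$, exactly one lift $\wh f$ satisfies $\wh f(\wh S)=\wh S$. Then $\wh f^n(z)\in \wh S$ for all $n\in\Z$, whence $|Re(\wh f^n(z))-Re(\wh f^m(z))|\le C_{\min}\le 3C_{\min}$. By $T$-equivariance this bound is independent of the chosen lift of $p$, and so $p\in \Idle(3C_{\min},\E(t_\gamma),A_{\min})$.

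The harder case is $p\in X\cap(A_{\min}\setminus O)$. By Lemma \ref{X-3} we may assume $p\in X_1$, so the four ``$\Gamma$-fixes-$S$'' identities of that lemma hold at $p$. Note that $\E(t_\gamma)\in\Theta$ by the Artin relation (\ref{even-relation}) or (\ref{odd-relation}), hence $\E(t_\gamma)^n(p)\notin O$ for every $n\in\Z$. One checks that $E$ and $X_1$ are both $\E(t_\gamma)$-invariant: if $\E(t_\gamma)$ fixed $S_{\E(t_\gamma)(p)}(\ac_i)$ then pulling back by $\E(t_\gamma)$ (which preserves $\Se(\ac_i)$ setwise) would give $p\in E$, contradicting $p\in X$; stability of $X_1$ follows from the commutativity of $\E(t_\gamma)$ with $\Gamma(\bc_1),\Gamma(\ac_1),\Gamma(\bc_2),\Gamma(\ac_2)$. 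The plan is then to identify, for an appropriate lift $\wh f$ of $\E(t_\gamma)$, a $\wh f$-invariant union of at most three component-lifts of the minimal decompositions $\Se(\ac_i),\Se(\bc_j),\Se(i,j)$ containing the entire orbit $\{\wh f^n(z)\}_{n\in\Z}$. Each such lift has Euclidean diameter at most $C_{\min}$, and by chaining the three along their common orbit points, the triangle inequality yields overall diameter at most $3C_{\min}$, hence the required idleness bound.

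The principal obstacle is the second case, specifically constructing the chain of three component-lifts and verifying its $\wh f$-invariance. This requires the combined use of the four identities from $X_1$, the Artin relation that writes $\E(t_\gamma)$ as a product of elements of $\Theta$, and crucially the stronger hypothesis $p\notin O$ (rather than merely $p\notin O_0$) to control every $\Theta$-translate of $p$ within the strip $P$.
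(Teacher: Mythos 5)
Your Case~1 ($p\in E$) is correct and matches the paper's argument: an $\E(t_\gamma)$-invariant acyclic component $S_p(\ac_i)$ or $S_p(\bc_j)$ supplies a single $\wh f$-invariant lift of Euclidean diameter at most $C_{\min}$ which traps the orbit of $\wh p$, and the idleness bound follows.

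Case~2 ($p\in X$, reduced to $p\in X_1$) is where the substance of the proposition lives, and your proposal does not carry it out: you describe the target (a $\wh f$-invariant union of at most three component-lifts carrying the whole orbit), list ingredients, and then declare the actual construction to be ``the principal obstacle.'' That obstacle is precisely the proposition, so this is a genuine gap. The paper's proof splits Case~2 according to whether $S_p(\ac_1)\cap E$ is empty. If it is nonempty, an anchor point $q\in S_p(\ac_1)\cap E$ gives an $\E(t_\gamma)$-invariant component $S_q(*)$ that meets every iterate $\E(t_\gamma)^n(S_p(\ac_1))$; lifting, the two orbit points $\wh f^n(\wh p)$ and $\wh f^m(\wh p)$ lie in the three sets $\wh f^n(\wh S_p(\ac_1))$, $\wh S_q(*)$, $\wh f^m(\wh S_p(\ac_1))$, each of diameter at most $C_{\min}$, giving the $3C_{\min}$ bound. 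If $S_p(\ac_1)\cap E=\emptyset$, one shows that every non-$O$ point of $S_p(\ac_1)$ lies in $X_1$, then invokes Proposition~\ref{semi-conjugate} (the Moore semi-conjugacy of the lifted decomposition to $P$) to promote the relation $\wh a_2(\wh S_q(\bc_1))=\wh S_q(\bc_1)+k$ to $k=0$ for the appropriate lift $\wh a_2$, i.e.\ to show that these lifts genuinely fix the relevant $\Se(\bc_1)$-component lifts meeting $\wh S_p(\ac_1)$ off $\wh O$; one then writes a general $\wt\theta$ in $\Theta(\ac)$ (the group generated by $\Gamma(\ac_1)$ and $\Gamma(\ac_2)$ only, which contains $\E(t_\gamma)$ by the Artin relation) as a word and chooses the lifts factor by factor to keep $\wh\theta(\wh p)$ within $C_{\min}$ of $\wh S_p(\ac_1)$. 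Your sketch does not see the dichotomy on $S_p(\ac_1)\cap E$, never invokes Proposition~\ref{semi-conjugate}, and works with the full group $\Theta$ rather than the subgroup $\Theta(\ac)$ where the word decomposition can actually be controlled; without these, the chain you hope for cannot be exhibited.
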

\begin{proof} If $p \in (A_{\min} \setminus O)$ then either $p \in E$ or $p \in X$ (in fact this is true by definition for every
$p \in (A_{\min} \setminus O_0)$). Assume that $p \in E$. Then there exists a component $S_p(i,j)$ such that $\E(t_{\gamma})(S_p(i,j))=S_p(i,j)$.
Let $\wh{S}_p(i,j)$ be a single lift of $S_p(i,j)$ to $P$ and let $\wh{p} \in \wh{S}_p(i,j)$ be the corresponding  lift of $p$ to $P$.
Moreover let $\wh{f}$ be the lift of $\E(t_{\gamma})$ to $P$ that setwise fixes $\wh{S}_p(i,j)$. We have that the Euclidean diameter of $\wh{S}_p(i,j)$ is less than $C_{\min}$ and $\wh{f}(\wh{S}_p(i,j))=\wh{S}_p(i,j)$. This shows that $|\wh{f}^{n}(\wh{p})-\wh{p}| \le C_{\min}$, for every $n \in \Z$. Therefore $p \in  \Idle(2C_{\min},\E(t_{\gamma}),A_{\min})$.
\vskip .1cm
Assume that $p \in X$. We may assume that $p \in X_1$ (the case  $p \in X_2$ is treated in the same way).
Let $S_p(\ac_1) \in \Se(\ac_1)$ be the corresponding component that contains $p$. There are two cases to consider.
\vskip .1cm
The first case is when  $S_p(\ac_1) \cap E \ne \emptyset$. Let $q \in S_p(\ac_1) \cap E \ne \emptyset$. Let $S_q(*)$ be the component such that $\E(t_{\gamma})(S_q(*))=S_q(*)$. This implies that
$$
\E(t_{\gamma})^{n}(S_p(\ac_1)) \cap S_q(*)  \ne \emptyset,
$$
\noindent
for every $n \in \Z$.
Let $\wh{q}$ be a lift of $q$ to $P$ and denote by $\wh{S}_{q}(*)$ the corresponding lift of  $S_q(*)$  to $P$ that contains $\wh{q}$.
Let $\wh{S}_{p}(\ac_1)$ be the corresponding lift of $S_p(\ac_1)$ to $P$ such that $\wh{q} \in \wh{S}_p(\ac_1)$. Also let
$\wh{p}$ be the lift of $p$ to $P$ such that $\wh{p} \in \wh{S}_{p}(\ac_1)$. Finally let $\wh{f}$ be the lift of $\E(t_{\gamma})$ to $P$ that setwise fixes the set $\wh{S}_{q}(*)$.
\vskip .1cm
Then for every $n \in \Z$ we have that $\wh{f}^{n}(\wh{p})$ belongs to the set $\wh{f}^{n}(\wh{S}_{p}(\ac_1))$, and $\wh{f}^{n}(\wh{S}_{p}(\ac_1))$ has non-empty intersection with $\wh{S}_{q}(*)$. Since the Euclidean diameter of each of the sets $\wh{S}_{q}(i,j)$ and $\wh{f}^{n}(\wh{S}_{p}(\ac_1))$, for every $n \in \Z$ is less than $C_{\min}$, we have that
$|\wh{f}^{n}(\wh{p})- \wh{f}^{m}(\wh{p})| \le 3C_{\min}$, for every $m, \, n \in \Z$ (that is for every $m,n \in \Z$ we have that the points
$\wh{f}^{n}(\wh{p})$ and $\wh{f}^{m}(\wh{p})$ are contained in the union of three sets, and  each of these three sets has the Euclidean diameter at most $C_{\min}$).
This shows that $p \in  \Idle(3C_{\min},\E(t_{\gamma}),A_{\min})$.
\vskip .1cm
The second case is when $S_p(\ac_1) \cap E =\emptyset$. Let $q \in S_p(\ac_1)$ be any point such that $q$ does not belong to $O$. Then either $q \in X_1$ or $q \in X_2$, and not both can hold since $X_1 \cap X_2=\emptyset$.  Since $p \in X_1$ we have that $\wt{b}_1(S_p(\ac_1))=S_p(\ac_1)$, for every $\wt{b}_1 \in \Gamma(\bc_1)$. If $q \in X_2$ then by Lemma \ref{X-3} we would have $\wt{b}_1(S_q(\ac_1)) \ne  S_q(\ac_1)$, for some $\wt{b}_1 \in \Gamma(\bc_1)$. But $S_p(\ac_1)=S_q(\ac_1)$, so we find that $q \in X_1$. This implies that $\wt{a}_2(S_q(\bc_1))=S_q(\bc_1)$,  for every $\wt{a}_2 \in \Gamma(\ac_2)$, and for every $q \in (S_p(\ac_1) \setminus O)$.
\vskip .1cm
Let $\wh{p}$ be a lift of $p$ to $P$ and let $\wh{S}_{p}(\ac_1)$ be the single lift of $S_p(\ac_1)$ to $P$ such that $\wh{p} \in \wh{S}_{p}(\ac_1)$. Let $\wh{S}_{p}(\bc_1)$ be the lift of $S_p(\bc_1)$ to $P$ that contains $\wh{p}$. For $\wt{a}_1 \in \Gamma(\ac_1)$ let $\wh{a}_1$ be the lift to $P$ that setwise fixes $\wh{S}_{p}(\ac_1)$. Also for every $\wt{b}_1 \in \Gamma(\bc_1)$ let $\wh{b}_1$ be the lift to $P$ that setwise fixes $\wh{S}_{p}(\bc_1)$ (we do not need this fact, but it can be easily  shown that the lift $\wh{b}_1$ setwise fixes the set $\wh{S}_{p}(\ac_1)$ as well). For $\wt{a}_2 \in \Gamma(\ac_2)$ let $\wh{a}_2$ be the lift to $P$ that setwise fixes $\wh{S}_{p}(\bc_1)$.
\begin{remark} Observe that each $\wh{a}_2$ and $\wh{b}_1$ has a fixed point in $P$. This follows from the assumption that $\wh{a}_2(\wh{S}_p(\bc_1))=
\wh{b}_1(\wh{S}_p(\bc_1))=\wh{S}_p(\bc_1)$, and since $\wh{S}_p(\bc_1)$ is an acyclic set we see that each $\wh{a}_2$ and $\wh{b}_1$ has a fixed point in
$\wh{S}_p(\bc_1)$.
\end{remark}

Next we want to show that every $\wh{a}_2 \in \Gamma(\ac_2)$ setwise preserves the lift of every component from $\Se(\bc_1)$ that intersects $\wh{S}_{p}(\ac_1)$ in some point that does not belong to $\wh{O}$ (where $\wh{O}$ is the total lift of $O$ to $P$). Fix $\wh{a}_2$. Let $\wh{S}_q(\bc_1)$ be the lift of a component from $\Se(\bc_1)$ such that $\wh{S}_q(\bc_1)$ intersects $\wh{S}_{p}(\ac_1)$, and such that this intersection contains a point $\wh{q}$ that is not in $\wh{O}$. Then  $\wh{q} \in \wh{X}_1$, where $\wh{X}_1$ is the total lift of $X_1$ to $P$.  This yields the relation $\wh{a}_2(\wh{S}_q(\bc_1))=\wh{S}_q(\bc_1)+k$, for some $k \in \Z$. Let $\wh{S}_q(2,1)$ be the lift of the corresponding component from $\Se(2,1)$ that contains $q$.  Since  each $\wh{a}_2$ and $\wh{b}_1$ has a fixed point in $P$, from Proposition \ref{semi-conjugate} we see that $\wh{a}_2(\wh{S}_q(2,1))=\wh{b}_1(\wh{S}_q(2,1))=\wh{S}_q(2,1)$. But then $\wh{S}_q(2,1)$ contains both $\wh{S}_q(\bc_1)$ and $\wh{S}_q(\bc_1)+k$. Since $S_q(2,1)$ is acyclic we see that $k=0$ that is $\wh{a}_2$ setwise fixes $\wh{S}_q(\bc_1)$.
\vskip .1cm
Let $\Theta(\ac)$ be the group generated by the elements from $\Gamma(\ac_1)$ and $\Gamma(\ac_2)$.
Let $\wt{\theta} \in \Theta(\ac)$. Then $\wt{\theta}=\wt{a}^{1}_1 \circ \wt{a}^{1}_2 \circ \wt{a}^{2}_1 \circ \wt{a}^{2}_2 \circ... \circ \wt{a}^{k}_1 \circ \wt{a}^{k}_2$, for some $\wt{a}^{j}_i \in \Gamma(\ac_i)$, where $i=1,2$. By appropriately choosing the lifts of each  $\wt{a}^{j}_i$ we see
that there exists a lift  $\wh{\theta}:P \to P$ of $\wt{\theta}$ such that $\wh{\theta}(\wh{p})$ belongs to a lift of some component from $\Se(\bc_1)$ to $P$ that intersects $\wh{S}_p(\ac_1)$, and this intersection contains a point that is not in $\wh{O}$ (here we use that $\wt{\theta}(O)=O$). Since $\E(t_{\gamma}) \in \Theta(\ac)$ we have that $p \in \Idle(2C_{\min},\E(t_{\gamma}),A_{\min})$ (that is for every $m,n \in \Z$ we have shown that the points  $\wh{f}^{n}(\wh{p})$ and $\wh{f}^{m}(\wh{p})$ are contained in the union of two sets, and  each of these two sets has the Euclidean diameter at most $C_{\min}$).
\end{proof}
\vskip .1cm
Now we are ready to prove Theorem \ref{main}. Let $K=3C_{\min}$. Let $D_{\min}$ be the corresponding connected component of the set
$A_{\min} \setminus \Idle(K,\E(t_{\gamma}),A_{\min})$ from Proposition \ref{minimal-connected}.
By the previous proposition we have that
$$
D_{\min} \subset A_{\min} \setminus \Idle(K,\E(t_{\gamma}),A_{\min}) \subset O.
$$
\noindent
On the other hand by Proposition \ref{minimal-long} we have that if $\wh{f}$ denotes a lift of $\E(t_{\gamma})$ to $P$,
then the cyclic group genereted by $\wh{f}$ is $N$ long range Lipschitz on the pair of points $p,q$ in $D_{\min}$, for some $N>0$ (the constant $N$  may depend on the choice of $p$ and $q$). Therefore we may apply  Lemma \ref{long-range}. This lemma implies that $\rho(\E(t_{\gamma}),A_{\min})=0$. But this is a contradiction.

\end{document}